 \def\al{\alpha}
 \def\de{\delta}
 \def\eps{\varepsilon}
 \def\ga{\gamma}
 \def\La{\Lambda}
 \def\om{\omega}
 \def\Laij{{\La_{ij}}}
 \def\R{{\mathbb R}}
 \def\Z{{\mathbb Z}}
 \def\N{{\mathbb N}}
 \def\T{{\mathbb T}}
 \def\I{{\mathbb I}}
 \def\E{{\mathbb E}}
 \def\Pr{{\mathbb P}}
\def \DiM{{ \mathcal D \big (0,+ \infty;\{1, \cdots, M\} \big )}}
\def \DiR{{ \mathcal D \big ( 0,+ \infty;\mathbb R^N \big )}}
\def \CiT{{ \mathcal C \big ( 0,+ \infty;\mathbb T^N \big )}}
\def \S{{ \mathcal S}}
\def \K{{ \mathcal K}}
\def \D{{ \mathcal D}}
\def\a{\mathbf a}
 \def\ai{a_i}
 \def\b{{\mathbf b}}
 \def\d{{\mathbf d}}
 \def\ei{{ \mathbf e}_i}
 \def\F{{\mathcal F}}
 \def\A{{\mathcal A}}
 \def\uno{{\mathbf 1}}
\def\L{{\mathbf L}}
 \def\cH{{\check{\mathbf H}}}
 \def\cH_i{{\check{ H_i}}}
 \def\cH{{\check{ L_i}}}
 \def\u{{\mathbf u}}
 \def\v{{\mathbf v}}
 \def\w{{\mathbf w}}
 \def\wi{w_i}
 \newcommand{\mres}{\mathbin{\vrule height 1.6ex depth 0pt width
 0.13ex\vrule height 0.13ex depth 0pt width
 1.3ex}}
 \DeclareMathOperator{\essinf}{ess\hskip 1.5pt inf}
 \DeclareMathOperator{\unoM}{\{1, \cdots, M\}}
 \DeclareMathOperator{\proj}{proj}
 \DeclareMathOperator{\im}{im}
\renewcommand{\proofname}{{\bf Proof:}}
\theoremstyle{plain}
\newtheorem{Thm}{Theorem}[section]
\newtheorem{Lemma}[Thm]{\bf Lemma}
\newtheorem{Corollary}[Thm]{\bf Corollary}
\newtheorem{Theorem}[Thm]{\bf Theorem}
\newtheorem{Proposition}[Thm]{\bf Proposition}
\numberwithin{equation}{section}
\theoremstyle{definition}
\newtheorem{Definition}[Thm]{\bf Definition}
\theoremstyle{remark}
\newtheorem{Remark}[Thm]{\bf Remark}
\newtheorem{Terminology}[Thm]{\bf Terminology}
\newtheorem{Notation}[Thm]{\bf Notation}
 \newtheoremstyle{Cl}
  {5pt}
  {3pt}
  {\sl}
  {}
  {\it}
  {:}
  {.5em}
  {}
 \theoremstyle{Cl}
 \def\begincproof{
                  \renewcommand{\proofname}{\it Proof:}
                  \begin{proof}
                 }
 \def\endcproof{
                \renewcommand{\qedsymbol}{$\diamondsuit$}
                \end{proof}
                \renewcommand{\qedsymbol}{\openbox}
                \renewcommand{\proofname}{\bf Proof:}
               }
\renewcommand{\proofname}{{\bf Proof:}}
\title[A Lagrangian approach to weakly coupled HJ systems]
{A Lagrangian approach to  weakly coupled Hamilton--Jacobi systems}
\thanks{
The work of HM was partially supported by JST program to disseminate
tenure tracking system,  the work of AS was partially supported by
Programma Ricerca Scientifica Sapienza 2013, and the work of HT was partially supported by
NSF grant DMS-1361236.}
\author[H. Mitake]{H. Mitake}
\address[H. Mitake]{
Institute for Sustainable Sciences and Development,
Hiroshima University 1-4-1 Kagamiyama, Higashi-Hiroshima-shi 739-8527, Japan.}
\email{hiroyoshi-mitake@hiroshima-u.ac.jp}
\author[A. Siconolfi]{A. Siconolfi}
\address[A. Siconolfi]{Dipartimento di Matematica,
Universit\`a degli Studi di Roma ``La Sapienza'' 00185 Roma, Italy.}
\email{siconolf@mat.uniroma1.it}
\author[H. V. Tran]{H. V. Tran}
\address[H. V. Tran]
{Department of Mathematics, The University of Chicago, 5734 S. University Avenue, Chicago, Illinois 60637, USA.}
\email{hung@math.uchicago.edu}
\author[N. Yamada]{N. Yamada}
\address[N. Yamada]{Department of Applied Mathematics, Faculty of Science, Fukuoka University, Fukuoka 814-0180, Japan.}
\email{nyamada@math.sci.fukuoka-u.ac.jp}
\begin{document}

\begin{abstract}
We study a class of weakly coupled Hamilton--Jacobi systems
with a specific aim to perform
    a qualitative analysis in the spirit of weak KAM
   theory. Our main achievement is the definition of  a family of related  action functionals
   containing the Lagrangians obtained by duality from  the Hamiltonians of the system. We use them to characterize,
    by means
   of a suitable  estimate, all the subsolutions of the system,  and to explicitly represent
some  subsolutions enjoying an additional maximality property.
A crucial step  for our analysis is to put the problem in a suitable
random frame. Only some basic knowledge of measure
theory is required, and the presentation is accessible to readers without
background in probability.
\end{abstract}


 \maketitle

\section{Introduction}
\parskip +3pt

This paper deals  with weakly coupled Hamilton--Jacobi systems of the
form
\[ \tag{HJ$\al$}
\left\{\begin{array}{l}
H_1(x,Du_1) + \Lambda^1 \cdot \mathbf u = \alpha  \\
\cdots \\
 H_M(x,Du_M) + \Lambda^M \cdot \mathbf u = \alpha
\end{array}\right.
\]
on the flat torus $\T^N$. Here $\u=(u_1, \cdots, u_M)$ is the
vector valued unknown function,  $Du_i$ the gradient of $u_i$,
$\al$  a real number, and $H_i$ are mutually
unrelated  convex Hamiltonians enjoying standard additional
properties (see Section \ref{setting}). The $\Lambda^i$ are the rows
of the so called $M \times M$ coupling matrix $\La:=(\La^1 \cdots \La^M)$, which
constitutes the relevant item in  the problem.

We are specifically interested in the setting which should
correspond in the scalar case, namely when $M=1$ and  $\La$ is just
 a constant,  to taking $\La =0$.  Then the system reduces to  a
single equation on $\T^N$  not directly depending on the unknown and
 classified as  of Eikonal type.

In this framework  a rich qualitative theory has been developed by
linking  PDE facts to geometrical/dynamical properties.
Representation formulae for (sub)solutions have been provided
through minimization of a suitable action functional, showing, among
other things, the existence of an unique value of $\al$, named a
critical value, for which (viscosity) solutions do exist. This
material has found applications in a variety of related asymptotic
problems, and connections with Hamiltonian dynamics have been
furthermore established, at least when the Hamiltonian is
sufficiently regular. This body of results is a part of the
so-called weak KAM theory, see
 \cite{Be,DS, FathiBook,FS1,FS2,I} for
details.

We recall that if instead  $\Lambda >0$ the corresponding equation
can be uniquely solved on the whole torus for any $\alpha$ and the
solution is the value function of a related control problem with
$\Lambda$ playing the role of discount factor.

To find an analogue of the  Eikonal case for systems, it is
convenient to start from paper \cite{IK}, where the class of
monotone systems is introduced, and existence and uniqueness results
of (viscosity) solutions are established. Regarding our system, to
be a monotone one corresponds to the following conditions on the
coupling matrix:
\begin{itemize}
    \item any non--diagonal entry of $\La$ is nonpositive;
    \item $\La$ is diagonal dominant, namely  $\sum_{j=1}^M \Laij \geq 0$ for any
    $i\in\{1,\dots,M\}$;
    \item strict diagonal dominance holds at least for one row.
\end{itemize}

This setting should be then  analogized to strict positiveness in
the scalar case and in this perspective it is consistent to focus on
the limit setup where  $\La$ satisfies:
\begin{itemize}
    \item any non--diagonal entry of $\La$ is nonpositive;
    \item any row sums to $0$.
\end{itemize}

It has been actually a merit of \cite{CaLeLoNg, MiTr1, MiTr2, MiTr3}
to have first realized and pointed out that under the above
assumptions on the coupling matrix, some phenomena, already
occurring in the Eikonal scalar case, also take place for systems,
and can be analyzed in the spirit of the weak KAM theory. In these
papers it has been   in particular showed the existence of a
critical value as the minimal value for which the corresponding
system admits subsolutions,  and some related asymptotic problems
have been studied  providing generalization of results already known
in the scalar case. Control interpretation for the Hamilton--Jacobi
system has clearly been investigated in \cite{MiTr2, MiTr3}. We also
refer to \cite{GS} for the study of the weak KAM theorem of another
type of systems.

A significant step forward in this direction has been more recently
performed in \cite{DaZa}, proving that, similarly to what happens in
the scalar case,  a distinguished subset of the torus,  named after
Aubry, can be defined   with the crucial property that the maximal
critical subsolution (i.e., a subsolution to the system with $\al$
equal to the critical value) taking a given value, among admissible
ones, at any fixed point  of the Aubry set is indeed a critical
solution. The aforementioned admissibility refers to the fact that
there is a restriction in the values that a subsolution of the
system  can assume at any given point. This is a further relevant
property pointed out in \cite{DaZa}, which genuinely depends on the
vectorial structure of the problem and has no equivalent in the
scalar case.

All the above  results, even if of clear interest, however pertain
to the PDE side of the theory, and are solely obtained by means of
PDE techniques. The geometric counterpart is so far missed and   the
intertwining between PDE and dynamical aspects,  which is at the
core of the weak KAM theory, has consequently still to be understood in
the framework of systems. This is actually the primary task the
paper is centered upon,  and  is above all performed  by  putting
the problem in a suitable random frame.

As a first step we consider  all the possible switchings between
indices $\unoM$ of the system  on an infinite time horizon. This
gives rise to the space of $\unoM$--valued cadlag paths, denoted by
$\D$,  endowed with the Skorohod metric and the corresponding Borel
$\sigma$--algebra $\F$. The coupling matrix, being under our
assumptions generator of a semigroup of stochastic matrices, induces
a linear correspondence between the simplex of  probability vectors
of $\R^M$, i.e., with nonnegative components summing to $1$, and a
simplex of $\F$--probability measures on $\D$, see Subsection
\ref{family}.

This construction is indeed equivalent to that of a Markov  chain
with rate matrix $- \La$, and in fact key formula \eqref{newprobaa}
defining the family of probability measures is nothing but the usual
finite--dimensional distribution formula with given initial
distribution. However we would like to emphasize that the advantage
of our approach is to avoid introducing an abstract probability
space, we just work with concrete path spaces, and also avoid
explicitly using notions as stochastic process, conditional
probability and other probabilistic tools. This makes the
presentation self--contained.

We make corresponding to elements of $\D$ $\R^N$--valued cadlag
velocity paths and obtain by integration of it the admissible random
curves on $\T^N$, see Subsection  \ref{ammetti}. Action functionals
are then obtained by averaging, with respect to previously
introduced probability measures on $\D$, line integrals over random
curves on time random intervals of the Lagrangians given by duality
by the Hamiltonians of the system, see \eqref{action},  which
justifies the title of the paper.

The effectiveness of our approach is demonstrated  by recovering
some crucial facts of the scalar case. Namely, we fully characterize
 all subsolutions of the system, for any $\al$ greater than or
equal to the critical value,  as the functions from $\T^N$ to $\R^M$
satisfying a suitable estimate with respect to our action
functionals, see Section \ref{estimate} and Theorem \ref{susuvai}.
We moreover use the action functionals to represent explicitly
critical and supercritical subsolutions enjoying an additional
maximality property, through a suitable minimization procedure,  see
Theorem \ref{mainsub}, and to give a dynamical formulation of the
property of being admissible for a value at a given point, see
Theorem \ref{suvai}. By this way we also provide a representation
formula for critical solutions taking a prescribed admissible value
at a given point of the Aubry set, complementing the  result of
\cite{DaZa}, see Theorem \ref{vai}.

The paper is organized as follows: in Section \ref{setting} we set
forth  the problem and recall some known facts about
critical/supercritical subsolutions and the Aubry set.  Section
\ref{random} is devoted to illustrate the random frame in which our
qualitative analysis takes place: the family of probability measures
$\Pr_\a$, for any probability vector $\a$ of $\R^M$, is introduced
and key notions as admissible control and stopping time are given.
In Section \ref{estimate} we define the action functionals and prove
the fundamental estimate for subsolution to the system. Section
\ref{subsol} is about representation formulae for subsolutions and
related results. Finally the two appendices gather basic material on
stochastic matrices and spaces of cadlag paths.

\bigskip

\section{Setting of the problem}\label{setting}

\parskip +3pt

Here we introduce the system, which is the object of investigation,
as well as standing assumptions and basic preliminary facts.  We
refer to \cite{CaLeLoNg, DaZa, MiTr1,MiTr2} for proofs and more
details on the results stated.

As already pointed out in Introduction, we will be  interested
on the one--parameter family of systems (HJ$\al$)
\begin{equation}\label{HJa} \tag{HJ$\al$}
\left\{\begin{array}{l}
H_1(x,Du_1) + \Lambda^1 \cdot \mathbf u = \alpha  \\
\cdots \\
 H_M(x,Du_M) + \Lambda^M \cdot \mathbf u = \alpha
\end{array}\right.
\end{equation}
posed on the flat torus $\T^N$ identified to  $ \R^N/\Z^N$. Here
$\mathbf u=(u_1, \cdots, u_M)$ is the vector--valued unknown
function, $\Lambda^i$ are the vectors given by the rows of the $M
\times M$ {\em coupling matrix} $\Lambda$, and $\al$ varies in $\R$.
The following conditions will be assumed throughout the paper
without any further mentioning.  On Hamiltonians $H_i$ we require
\begin{itemize}
    \item[{\bf(H1)}] $H_i$ is continuous in both variables;
    \item[{\bf(H2)}] $H_i$ is convex in $p$;
    \item[{\bf(H3)}] $H_i$ is superlinear in $p$;
\end{itemize}
\smallskip
The growth condition in {\bf(H3)}, together with
{\bf(H1)}, {\bf(H2)}, allows defining   the  corresponding
Lagrangians via the Legendre--Fenchel transform, namely
\[L_i(x,q)= \max_{p\in\R^n} \big(p \cdot q - H_i(x,p)\big) \quad\hbox{for any $i$},\]
and they inherit from $H_i$  the properties of being continuous, convex
and superlinear at infinity.

We furthermore require on coupling matrix $\La$:
\begin{itemize}
\item[{\bf(H4)}] any non--diagonal entry  of $\La$ is nonpositive.
\item[{\bf(H5)}] any row of $\La$ sums to $0$.
\item[{\bf(H6)}] $\La$  is irreducible.
  \end{itemize}
{\em Irreducible} means that, given any nonempty subset of indices $I \subsetneq
\unoM$, there is $i \in I$, $j \not\in I$ with $\Laij \neq 0$;
loosely speaking this condition means that  the system cannot be
split in separated subsystems.

As made precise in Appendix \ref{uno}, the key point is that
{\bf(H4)}, {\bf(H5)} are equivalent to  $- \La$ being generator of a
semigroup of stochastic matrices. We also recall  that under
{\bf(H4)}, {\bf(H5)}, {\bf(H6)} the matrix $\La$ is singular with
rank  $M-1$ and kernel spanned by $\uno$, namely the vector with all
components equal to $1$, moreover $\im(\La)$ cannot contain vectors
with strictly positive or negative components. This in particular
implies $\im(\La) \cap \ker(\La)=\{0\}$.

\medskip

\begin{Notation}\label{notproje}  The projection of $\R^N$ onto $\T^N =
\R^N/\Z^N$ induces a structure of additive group on $\T^N$. To
ease notations we will indicate throughout the paper by the usual
symbols $+$, $-$ the corresponding operations between elements of
the torus.

\end{Notation}

\medskip

The notion of viscosity (sub/super)solution  can be easily adapted
to systems as \eqref{HJa}, we will drop in the following  the term
viscosity since no other kind of weak solution will be considered.

\smallskip

\begin{Definition} A continuous function $\u=(u_1, \cdots, u_M)$ is a
subsolution (resp., supersolution) of \eqref{HJa} if the inequality
\[ H_i(x,D\psi(x)) + \La^i \cdot \u(x) \leq \al \quad (\text{resp.},  \geq \al)\]
holds for every $x \in \T^N$,  $i \in \unoM$, and $\psi\in
C^1(\T^n)$ such that $u_i-\psi$ attains a maximum (resp., minimum)
at $x$. We call $\u$ a solution if it is both a subsolution and
supersolution.
\end{Definition}
\medskip

\begin{Remark} One could wonder why we are considering systems with the same constant appearing
in the right--hand side of any equation, while a more natural
condition should be to have instead a vector of $\R^M$, say $\a$,
with possibly different components. We point out that, under our
assumptions,  such a setting is actually no more general. In fact,
if we write the vector $\a$
 as $\a_1 + \a_2$ with $\a_1= \al \, \uno \in \ker(\La)$, $\a_2 \in \im(\La)$, where this form is uniquely
determined because $\im(\La) \cap \ker(\La) = \{0\}$, and pick $\b$
with $\La \, \b= - \a_2$,  then $\u$ is a (super/sub)solution to
\eqref{HJa} if and only if $\u + \b$ satisfies the same properties
for the system obtained from \eqref{HJa} by replacing in the right
hand side $\al \, \uno$ by $\a$.
\end{Remark}

\begin{Remark}\label{solu}
Due to the coercivity condition, any subsolution to \eqref{HJa}
is Lipschitz continuous. Moreover, owing to the convexity of the
Hamiltonians, the notion of viscosity and \textit{a.e.} subsolutions are
 equivalent for \eqref{HJa}. Furthermore, we can express the
same property using generalized gradients of any component in the
sense of Clarke. Namely, $\w$ is a subsolution to \eqref{HJa} if and
only if
\[ H_i(x,p) + \La^i \cdot \w(x) \leq \al\]
for any $x \in \T^N$, $p \in \partial \wi(x)$, $i \in \unoM$, where
$\partial \wi(x)$ indicates the generalized gradient of $\wi$ at
$x$.
\end{Remark}

\medskip

Here are two basic propositions.
\begin{Proposition}\label{preli} The family of all  subsolutions to \eqref{HJa}, if nonempty, is
equi-Lipschitz continuous with Lipschitz constant denoted by
$\ell_\al$.
\end{Proposition}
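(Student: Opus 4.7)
My plan is to reduce the equi-Lipschitz estimate to a uniform pointwise bound on the cross-component differences $u_i(x)-u_k(x)$, from which one can control $|\La^i\cdot\u|$ and then, via superlinearity of each $H_i$, the gradients $|Du_i|$. By Remark~\ref{solu} every subsolution is Lipschitz and the inequality $H_i(x,Du_i(x))+\La^i\cdot\u(x)\leq\al$ holds a.e.\ in $x$; using (H4)--(H5) one rewrites
\[
\La^i\cdot\u(x)=\sum_{k\neq i}|\La_{ik}|\bigl(u_i(x)-u_k(x)\bigr),
\]
so only differences, and not absolute values, enter the estimate.

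To control these differences at a fixed $x\in\T^N$, I would pick an index $i_0=i_0(x)$ realizing $\max_l u_l(x)$ and set $\delta_k(x):=u_{i_0}(x)-u_k(x)\geq 0$; in particular $\delta_{i_0}=0$. Each $H_j$ attains a finite minimum $m_j$ on $\T^N\times\R^N$ by continuity and superlinearity, so the subsolution inequality for the $j$-th equation, rewritten via $u_j-u_k=\delta_k-\delta_j$, becomes
\[
\sum_{k\neq j}|\La_{jk}|\,\delta_k\leq K_j+\La_{jj}\,\delta_j,\qquad K_j:=\al-m_j,
\]
which by continuity holds everywhere. The nonnegativity of the $\delta_k$ then yields the one-step bound $\delta_k\leq (K_j+\La_{jj}\delta_j)/|\La_{jk}|$ for every $k$ with $|\La_{jk}|>0$.

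The final ingredient is to propagate this along the coupling graph. Irreducibility (H6) is equivalent to strong connectivity of the directed graph on $\unoM$ whose edges are the pairs $(j,k)$ with $|\La_{jk}|>0$: if no outgoing edge left the set reachable from some $i$, that set would violate (H6). Starting from $\delta_{i_0}=0$ and iterating the one-step bound along a directed path from $i_0$ to $k$ therefore produces $\delta_k\leq D$ for a constant $D$ depending only on $\al$, $\La$, and the $(H_i)_i$; maximizing over the $M$ possible starting indices removes the dependence on $x$ and on $\u$. Substituting back, $|\La^i\cdot\u(x)|\leq\La_{ii}D$ pointwise, so $H_i(x,Du_i(x))\leq\al+\La_{ii}D$ a.e., and superlinearity of $H_i$ delivers the universal Lipschitz constant $\ell_\al$. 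I expect the delicate point to be this propagation step: it works only because the $\delta_k$ are nonnegative, which forces every term in the above weighted sum to give a useful one-sided bound, and this sign information is precisely what the choice $i_0=\arg\max_l u_l(x)$ supplies.
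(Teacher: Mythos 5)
Your argument is correct, but it is worth noting that the paper itself does not prove Proposition \ref{preli}: it is quoted from the references \cite{CaLeLoNg,DaZa,MiTr1,MiTr2}, where the usual route is different from yours. There one exploits the left Perron--Frobenius vector: by {\bf(H4)}--{\bf(H6)} (cf.\ the appendix on stochastic matrices applied to $e^{-t\La}$) there is a strictly positive row vector $\pi$ with $\pi\,\La=0$; multiplying the $i$-th inequality by $\pi_i>0$ and summing makes the coupling disappear, giving $\pi_i H_i(x,Du_i(x))\leq \al-\sum_{j\neq i}\pi_j m_j$ a.e., and uniform coercivity then yields $\ell_\al$ in one stroke. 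Your proof instead never eliminates the coupling: you bound the pairwise differences $u_i-u_k$ uniformly by propagating the one-step estimate $\delta_k\leq (K_j+\La_{jj}\delta_j)/|\La_{jk}|$ along the directed graph of nonzero off-diagonal entries, starting from the maximizing index where $\delta_{i_0}=0$; the sign information $\delta_k\geq 0$ and the reachability of every index from $i_0$ (which is exactly what {\bf(H6)} gives, as you argue) make the recursion close after at most $M-1$ steps with a constant depending only on $\al$, $\La$ and $\min H_j$. Both proofs are complete modulo the same routine points, which you should state explicitly: the coercivity must be uniform in $x$ (e.g.\ $H_i(x,p)\geq R|p|-\max_{\T^N\times \overline{B}_R}L_i$, available since the $L_i$ are finite and continuous), and the a.e.\ gradient bound upgrades to the Lipschitz bound because each $u_i$ is already Lipschitz by Remark \ref{solu}. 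The eigenvector argument is shorter and does not use the fine structure of the graph; your argument costs the propagation step but buys a genuinely stronger by-product, namely a bound on $\max_{i,k}\|u_i-u_k\|_\infty$ uniform over all subsolutions of \eqref{HJa}, which is precisely the kind of information underlying the discussion of $F_\al$ and the rigidity phenomenon of \cite{DaZa}.
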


\medskip

\begin{Proposition}\label{maximal}
The family of  subsolutions to \eqref{HJa}
taking the same value at a given point, if nonempty, admits a
maximal element.
\end{Proposition}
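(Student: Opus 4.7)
The plan is to build the maximal element as the componentwise pointwise supremum of the whole admissible family and then verify that it is itself a subsolution.

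Let $\mathcal{S}$ denote the (by assumption nonempty) family of subsolutions $\w$ to \eqref{HJa} with $\w(x_0) = \v_0$, where $(x_0, \v_0) \in \T^N \times \R^M$ is the prescribed data. By Proposition \ref{preli}, $\mathcal{S}$ is equi-Lipschitz with constant $\ell_\al$, hence uniformly bounded on the compact torus. Setting
\[ u_i(x) := \sup\{w_i(x) : \w \in \mathcal{S}\}, \qquad i \in \unoM,\ x \in \T^N, \]
yields an $\ell_\al$-Lipschitz function $\u=(u_1,\dots,u_M)$ with $\u(x_0)=\v_0$, which is by construction an upper bound of $\mathcal{S}$ in the componentwise order. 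What remains is to show $\u \in \mathcal{S}$.

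I would check the a.e. subsolution inequality, which by Remark \ref{solu} is equivalent to being a viscosity subsolution. Fix $i$ and a differentiability point $x$ of $u_i$, and set $p:=Du_i(x)$. By definition of the supremum, pick $\w^n \in \mathcal{S}$ with $w^n_i(x) \to u_i(x)$; extract via Arzel\`a--Ascoli a subsequence converging uniformly to some $\widetilde \w$, still a subsolution by stability under uniform limits. Then $\widetilde \w \leq \u$ componentwise while $\widetilde w_i(x)=u_i(x)$. Since $u_i-\widetilde w_i \geq 0$ vanishes at $x$ and $u_i$ is differentiable there, $p$ belongs to the superdifferential of $\widetilde w_i$ at $x$ (witnessed, e.g., by the test function $\psi(y)=u_i(x)+p\cdot(y-x)+\eps|y-x|^2$), so the subsolution inequality for $\widetilde \w$ yields
\[ H_i(x,p) + \La^i \cdot \widetilde \w(x) \leq \al. \]
Now assumption {\bf(H4)} enters: since $\widetilde w_i(x) = u_i(x)$ while $\widetilde w_j(x) \leq u_j(x)$ and $\La_{ij} \leq 0$ for $j \neq i$, one has $\La^i \cdot \widetilde \w(x) \geq \La^i \cdot \u(x)$. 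Hence $H_i(x,p) + \La^i \cdot \u(x) \leq \al$, the desired a.e. subsolution property.

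\textbf{Main obstacle.} The subtle point, and the only genuine difficulty, is that for coupled systems the componentwise supremum of subsolutions is not automatically a subsolution: distinct elements of $\mathcal{S}$ may realize the sup in distinct coordinates, so a priori no single admissible $\w$ can be tested against a given test function at $x$. What saves the argument is exactly the sign condition {\bf(H4)}: raising the off--diagonal coordinates $j \neq i$ from $\widetilde w_j(x)$ to the larger value $u_j(x)$ can only \emph{decrease} the coupling term $\La^i \cdot (\cdot)$, so it preserves rather than breaks the $i$-th subsolution inequality. Row sums zero and irreducibility of $\La$ play no direct role in this particular step; the full strength of {\bf(H4)} alone is what makes the monotone structure of the system compatible with taking suprema.
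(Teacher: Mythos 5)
Your proof is correct and is essentially the standard argument behind this statement: the paper does not prove Proposition \ref{maximal} itself but defers to \cite{CaLeLoNg,DaZa,MiTr1,MiTr2}, where the maximal element is obtained exactly as you do, by taking the componentwise pointwise supremum of the equi-Lipschitz family and using the sign condition {\bf(H4)} (quasi-monotonicity of the coupling) together with stability of subsolutions under uniform convergence to verify the subsolution property at differentiability points. One minor slip: your parenthetical witness $\psi(y)=u_i(x)+p\cdot(y-x)+\eps|y-x|^2$ need not touch $\widetilde w_i$ from above, since the $o(|y-x|)$ error in the differentiability of $u_i$ is not controlled by $\eps|y-x|^2$; the conclusion nevertheless stands because it is a standard fact that the viscosity subsolution inequality holds for every $p$ in the superdifferential $D^+\widetilde w_i(x)$, via the usual construction of a $C^1$ function touching from above with gradient $p$ at $x$.
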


\medskip

We define the {\em critical value} $\ga$ as
\[\ga = \inf \{ \al\in \R \mid \hbox{\eqref{HJa} admits subsolutions}\}\]
The infimum in the definition of $\ga$ is actually a minimum, as
made precise below.

%
\smallskip

\begin{Proposition}  The critical system {\rm(HJ$\gamma$)} is the unique  in the one--parameter family
\eqref{HJa}, $\al \in \R$, for which there are  solutions.
\end{Proposition}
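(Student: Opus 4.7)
\emph{Proof plan.} The statement contains two assertions: existence of a solution at $\al=\gamma$ and non-existence of solutions for every $\al\neq\gamma$. I would treat them separately, through a Perron-type construction for existence and a doubling-of-variables comparison for non-existence.

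\emph{Existence at $\gamma$.} I would first check that $\gamma$ is attained as a minimum. Choosing $\al_n\downarrow\gamma$ with $\u^n$ a subsolution of (HJ$\al_n$), normalizing by subtracting $u^n_1(0)\uno$ (legitimate since $\La\uno=0$), invoking Proposition \ref{preli} on $[\gamma,\al_1]$ for equi-Lipschitz continuity, and extracting a uniformly convergent subsequence by Ascoli--Arzel\`a, one obtains a limit which is a subsolution of (HJ$\gamma$) by the standard stability of viscosity subsolutions. To produce a solution I would apply a Perron argument centered on a maximal subsolution $\w$ of (HJ$\gamma$) given by Proposition \ref{maximal} (with value prescribed at a reference point $x_0$). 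If $\w$ failed the supersolution inequality at some $(x_1,i)$ with $x_1\neq x_0$, there would exist $\psi\in C^1(\T^N)$ with $w_i-\psi$ having a strict minimum at $x_1$, $\psi(x_1)=w_i(x_1)$, and $H_i(x_1,D\psi(x_1))+\La^i\cdot\w(x_1)<\gamma$. I would then replace $w_i$ in a small neighborhood of $x_1$ by $\max(w_i,\psi+\de)$ for $\de>0$ small, keeping the other components unchanged. The $i$-th equation stays a subsolution by continuity of $H_i$ and smallness of $\de$; the $j$-th equations for $j\neq i$ stay subsolutions because $\La_{ji}\le 0$ by {\bf(H4)}, so raising $w_i$ alone only decreases $\La^j\cdot\w$. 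The enlarged $\w^*$ contradicts the maximality of $\w$. The exceptional role of $x_0$ is handled by varying the reference point, in the spirit of the Perron method for monotone systems in \cite{DaZa,MiTr2}.

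\emph{Non-existence for $\al\neq\gamma$.} The case $\al<\gamma$ is immediate from the definition of $\gamma$ as the infimum of $\al$ admitting subsolutions. For $\al>\gamma$, suppose $\u^\al$ solves (HJ$\al$), take any subsolution $\w$ of (HJ$\gamma$), and let $(i^*,x^*)$ realize $\max_{(i,x)\in\unoM\times\T^N}(w_i(x)-u^\al_i(x))$. A doubling-of-variables on the scalar components $w_{i^*}$ and $u^\al_{i^*}$, testing $\w$ as a subsolution of (HJ$\gamma$) and $\u^\al$ as a supersolution of (HJ$\al$) at the maximum point $(x_\eps,y_\eps)$ of $w_{i^*}(x)-u^\al_{i^*}(y)-|x-y|^2/(2\eps)$ with common slope $p_\eps=(x_\eps-y_\eps)/\eps$, yields upon subtraction and the limit $\eps\to 0$ (using the uniform Lipschitz bound of Proposition \ref{preli} to control $p_\eps$ and continuity of $H_{i^*}$ to kill the $x$-dependence gap)
\[
\La^{i^*}\cdot\big(\w(x^*)-\u^\al(x^*)\big)\,\le\,\gamma-\al\,<\,0.
\]
On the other hand, the choice of $(i^*,x^*)$ gives $w_j(x^*)-u^\al_j(x^*)\le w_{i^*}(x^*)-u^\al_{i^*}(x^*)$ for every $j$; combined with $\La_{i^*j}\le 0$ for $j\neq i^*$ and $\sum_j\La_{i^*j}=0$, this forces $\La^{i^*}\cdot(\w(x^*)-\u^\al(x^*))\ge 0$, the desired contradiction.

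\emph{Main obstacle.} The delicate step is the Perron construction: when a single component of a candidate subsolution is raised, all $M$ equations must continue to satisfy the subsolution inequality. Property {\bf(H4)} is precisely what keeps the single-component bump compatible with the coupling. In the non-existence part, the row-sum condition {\bf(H5)} plays the role of a degenerate monotonicity for $\La$, and its interplay with {\bf(H4)} at the vectorial maximum point $(i^*,x^*)$ produces the sign of the coupling term that forces the contradiction.
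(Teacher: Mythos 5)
The paper itself does not prove this proposition; it is quoted from \cite{CaLeLoNg, DaZa, MiTr1, MiTr2}, so your argument has to stand on its own. The non-existence half does stand: for $\al<\ga$ a solution would in particular be a subsolution, contradicting the definition of $\ga$, and for $\al>\ga$ your doubling argument on the component $i^*$ realizing $\max_{(i,x)}(w_i(x)-u^\al_i(x))$ is sound — the Lipschitz bound controls $p_\eps$, the uniform continuity of $H_{i^*}$ kills the $x$--dependence gap, and the sign computation $\La^{i^*}\cdot(\w-\u^\al)(x^*)\ge 0$ from {\bf(H4)}, {\bf(H5)} is exactly the right use of the degenerate monotonicity (you could even compare with a subsolution at a level $\al'\in(\ga,\al)$ so as not to rely on attainment of the infimum).

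The existence half has a genuine gap precisely at the point you label ``the exceptional role of $x_0$''. The bump argument only shows that the maximal subsolution prescribed at $x_0$ is a supersolution on $\T^N\setminus\{x_0\}$, and ``varying the reference point'' does not repair this: the punctured solutions obtained for different reference points cannot simply be combined (the componentwise minimum of supersolutions of this system is again a supersolution, but it fails to remain a subsolution of a convex system, so gluing does not work). What is actually needed is the existence of at least one point $y$ at which no critical subsolution has a locally strict component — i.e. $\A\neq\emptyset$ — and then one takes $x_0=y$; nonemptiness of $\A$ requires its own argument (if every point admitted, for every index, a locally strict critical subsolution, a finite convex combination over a cover of $\T^N$ would yield a subsolution at level $\ga-\de$, contradicting the minimality of $\ga$; this is the route of \cite{DaZa}, cf.\ Proposition \ref{DZ1}), or one bypasses Perron entirely via the discounted approximation as in \cite{CaLeLoNg, MiTr1}. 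A second, smaller gap: in your compactness step, equi-Lipschitz continuity plus the normalization $u^n_1(0)=0$ does not bound the remaining components; you must also bound the differences $u^n_i-u^n_j$, which comes from rewriting the subsolution inequality as $\sum_{j\neq i}(-\La_{ij})(u^n_i-u^n_j)\le \al_1+C$ and invoking the irreducibility assumption {\bf(H6)}, not merely Proposition \ref{preli}.
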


\smallskip


Following  \cite{DaZa}, we give the definition of the Aubry set $\A\subset\T^N$ from
the PDE point of view:
\begin{Definition} A point $y$ belongs to the Aubry set $\A$ if
any maximal critical subsolution  taking a given value at $y$ is a solution to
{\rm(HJ$\gamma$)}.
\end{Definition}
Roughly speaking the Aubry set, which is a closed
nonempty subset of $\T^N$,  is the place where it is concentrated
the obstruction in getting subsolutions of system below the critical
level.  More specifically, there cannot be any critical subsolution
which is, in addition, locally strict at a point in $\A$, in the
sense of the above definition.

\begin{Definition}
For a given critical subsolution $\u$, a component $u_i$, for some
$i \in \unoM$,  is said {\em locally strict} at a point $y \in \T^N$
if there is a neighborhood $U$ of $y$ and a positive constant $\de$
with
\[ H_i(x,Du_i) + \La^i \cdot \u \leq \ga- \de \quad\hbox{a.e. $x \in
\ U$.}\]
\end{Definition}

In analogy with the scalar case, we have a following property:
\smallskip

\begin{Proposition}[{\rm\cite[Proposition 3.9]{DaZa}}]\label{DZ1}  A point
 $y \not \in \A$ if and only if for any given
index $ i \in \unoM$, there exists a critical subsolution $\u$ with
$u_i$ locally strict at $y$.
\end{Proposition}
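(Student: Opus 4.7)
The plan is to recast the statement as: $y\notin\A$ if and only if there exists a \emph{single} critical subsolution $\v$ whose every component is locally strict at $y$. The nontrivial part of this reduction uses convex combinations: given critical subsolutions $\u^{1},\dots,\u^{M}$ with $u_{k}^{k}$ locally strict at $y$ for each $k$, set $\v:=\frac{1}{M}\sum_{k=1}^{M}\u^{k}$. By convexity of each $H_{i}$ in $p$ and linearity of the coupling term, $\v$ is a critical subsolution, and for each $i$
\[
H_{i}(x,Dv_{i})+\La^{i}\cdot\v \;\le\; \frac{1}{M}\bigl(H_{i}(x,Du_{i}^{i})+\La^{i}\cdot\u^{i}\bigr) + \frac{M-1}{M}\ga \;\le\; \ga-\frac{\de_{i}}{M}
\]
a.e.\ near $y$, so every component of $\v$ is strict at $y$.

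$(\Leftarrow)$: Given such a $\v$, let $\w$ be a maximal critical subsolution with $\w(y)=\v(y)$, provided by Proposition~\ref{maximal}; I would show that $\w$ is not a solution. The perturbation $\v+\eps\phi\uno$, with $\phi\ge 0$ smooth, $\phi\not\equiv 0$, $\phi(y)=0$, $\supp\phi$ inside the strictness neighborhood, and $\eps>0$ small, remains a critical subsolution (using $\La\uno=0$ and uniform continuity of the $H_{i}$ on bounded sets) and takes the same value at $y$; this already shows $\v$ is not maximal, hence $\w$ strictly exceeds $\v$ in a punctured neighborhood of $y$. To force a supersolution failure, one extends the strict a.e.\ inequality for $\v$ to its Clarke subdifferential by upper semicontinuity (Remark~\ref{solu}), then tests the viscosity supersolution inequality for $\w$ at $y$ against smooth mollifications of $\v$: combined with $\v(y)=\w(y)$ and the strict gap $\de$, one obtains $\ga\le\ga-\de/2$, a contradiction.

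$(\Rightarrow)$: If $y\notin\A$, by definition there is a value $c$ and a maximal critical subsolution $\w$ with $\w(y)=c$ that is not a solution: at some $x_{0}\in\T^{N}$, some index $j$, and some $\psi\in C^{1}$, $w_{j}-\psi$ has a local minimum at $x_{0}$ and $H_{j}(x_{0},D\psi(x_{0}))+\La^{j}\cdot\w(x_{0})<\ga$. A standard lift construction (replace $w_{j}$ by $\max\{w_{j},\psi+c_{0}+\eta\}$ in a small neighborhood of $x_{0}$, where $c_{0}:=w_{j}(x_{0})-\psi(x_{0})$ and $\eta>0$ is small enough that $\La_{jj}\eta$ does not consume the strict gap) produces a critical subsolution locally strict at $x_{0}$ in component $j$; the nonpositive off-diagonal structure of $\La$ guarantees that the other equations are preserved. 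It then remains to propagate this local strictness from $(x_{0},j)$ to $(y,i)$ for any prescribed $i$.

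This propagation is the main obstacle. Spatial propagation from $x_{0}$ to $y$ follows the scalar weak KAM pattern, via Lax--Oleinik-type inf-convolutions along curves joining the two points, tested against the Lagrangian $L_{j}$. Cross-component propagation from $j$ to $i$ relies crucially on irreducibility {\bf (H6)}: it furnishes a chain of indices $j=k_{0},k_{1},\dots,k_{r}=i$ with $\La_{k_{s-1}k_{s}}\ne 0$, along which strictness is transported by taking convex combinations with subsolutions already strict in intermediate components. The random-frame machinery developed later in the paper streamlines both steps, but in the purely PDE setting of \cite{DaZa} the construction is carried out directly.
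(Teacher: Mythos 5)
First, a point of comparison: the paper does not prove this proposition at all — it is imported verbatim from \cite[Proposition 3.9]{DaZa} as background — so there is no in-paper argument to match; your attempt must stand on its own, and it does not. The reduction by convex combination is correct, and the ``if'' direction is morally right though only sketched: the step where you derive a supersolution failure for the maximal $\w$ needs the standard completion (mollify $v_i$, using convexity {\bf(H2)} and Jensen to keep the strict a.e.\ inequality up to a modulus, subtract $|x-y|^2$, locate an interior touching point of $w_i$ from below near $y$ using $\w\geq \v$, $\w(y)=\v(y)$ and uniform closeness of the mollification, and control the coupling discrepancy $\La^i\cdot(\w-\v)$ by continuity since the two functions agree at $y$); also your intermediate claim that $\w$ strictly exceeds $\v$ in a punctured neighborhood of $y$ is unjustified (you only get $\w>\v$ where $\phi>0$), but it is not needed.

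The genuine gap is in the ``only if'' direction, which you leave essentially unproven and for which the route you sketch would fail. Local strictness of a critical subsolution at a point $x_0$ cannot be ``propagated'' to a different prescribed point $y$ by Lax--Oleinik/inf-convolution along curves — nothing in that construction sees $y$ — and convex combinations can never \emph{create} strictness in a component in which none of the combined subsolutions is strict, so your cross-component mechanism is also inoperative. What the argument must exploit, and yours never does, is the maximality of $\w$ relative to the prescribed value at $y$: if the supersolution inequality failed at some $x_0\neq y$, then your own lift construction, performed in a small ball not containing $y$, would yield a critical subsolution $\geq\w$, strictly larger near $x_0$ and still equal to $c$ at $y$, contradicting maximality. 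Hence the failure must occur at $y$ itself, and the lift at $y$ gives a critical subsolution whose $j$-th component is locally strict at $y$. Transfer to other indices is then done not by convex combinations but by adding a small bump $\eps\phi$, $\phi\geq 0$, $\phi(y)>0$, $\supp\phi$ inside the strictness neighborhood, to the $j$-th component: equation $j$ absorbs both $H_j(x,Du_j+\eps D\phi)$ and $\eps\La_{jj}\phi$ thanks to the gap, while every equation $i$ with $\La_{ij}<0$ gains the strictly negative term $\eps\La_{ij}\phi$ and so becomes strict near $y$; iterating along a chain of indices provided by irreducibility {\bf(H6)} reaches any prescribed $i$. Without this maximality/localization step your forward implication does not close.
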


An interesting fact pointed out in  \cite{DaZa} is that there is a
restriction on the values that a  subsolution to \eqref{HJa} can
attain at a given point. This is a property due to the vectorial
structure of the problem and has no counterpart in the scalar case.
The authors refer to it as  {\em rigidity property} or rigidity
phenomenon. For $\al \geq \ga$, we define for $x \in \T^N$
\begin{equation}\label{defF}
    F_\al(x)= \{ \b \in \R^M \mid \exists \; \u \;\hbox{subsolution to
    \eqref{HJa} with }\u(x)= \b\}.
\end{equation}
Notice that $F_\al(x)$ is convex because of the convex character of
the Hamiltonians, in addition, if $\b \in F_\al(x)$ then $\b + \mu \,
\uno$  is still in $F_\al(x)$ for any $\mu \in \R$, being $\uno \in
\ker(\La)$. This is in a sense equivalent of adding a constant to a
subsolution in the scalar case.
We have a following rigidity phenomenon on $\A$:
\begin{Proposition}[{\rm\cite[Theorem 5.1]{DaZa}}]\label{DZ2}
The  admissible values for  critical
subsolutions  at a given point in $\A$ are of the form
\[ \b + \mu \, \uno\]
where $\b\in\R^M$ depending on $y$,  and $\mu\in\R$.
\end{Proposition}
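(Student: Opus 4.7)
I would argue by contradiction. Suppose $\b_1, \b_2 \in F_\ga(y)$ with $\b_1 - \b_2 \notin \R\uno$. By Proposition \ref{maximal} there exist maximal critical subsolutions $\u_1,\u_2$ with $\u_k(y)=\b_k$; since $y\in\A$, both are in fact solutions of {\rm(HJ$\ga$)}. Translating $\u_2$ by the scalar multiple $\mu\uno$ with $\mu=\min_i(u_{1,i}(y)-u_{2,i}(y))$ (which preserves being a solution, as $\uno\in\ker\La$), I may assume $\u_1(y)\ge\u_2(y)$ componentwise, with equality on a nonempty proper subset $E\subsetneq\{1,\dots,M\}$ and strict inequality on its complement. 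The strategy is then to produce a critical subsolution admitting a locally strict component at $y$, which by Proposition \ref{DZ1} gives $y\notin\A$ and the desired contradiction.

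To set the construction up, I invoke the irreducibility of $\La$ (H6) on the proper nonempty subset $E$: there exist $i\in E$ and $j\notin E$ with $\La_{ij}\ne 0$, and then $\La_{ij}<0$ by (H4). Expanding
\[
\La^i\cdot\u_1(y)-\La^i\cdot\u_2(y) \;=\; \sum_{k\notin E}\La_{ik}\,\bigl(u_{1,k}(y)-u_{2,k}(y)\bigr),
\]
the $k=i$ term drops out (since $i\in E$), every summand is non-positive, and the $k=j$ term is strictly negative; hence $\La^i\cdot\u_2(y)>\La^i\cdot\u_1(y)$. I would then try to convert this genuine coupling slack in the $i$-th equation, by combining the convex combination $\u_t=t\u_1+(1-t)\u_2$ (a subsolution by convexity of $H_i$) with a small localized perturbation $-\varepsilon\,\ei\,\vr(x)$ for a suitable cutoff $\vr$ and small $\varepsilon>0$, into a critical subsolution $\w$ whose $i$-th component is locally strict at $y$.

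Once one locally strict component is available, a second appeal to the irreducibility of $\La$, walking through the coupling graph one off-diagonal entry at a time, would propagate local strictness from index $i$ to every index $k\in\{1,\dots,M\}$, yielding for each $k$ a critical subsolution whose $k$-th component is locally strict at $y$. By the contrapositive of Proposition \ref{DZ1}, this forces $y\notin\A$, completing the contradiction.

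\textbf{Main obstacle.} The passage from the pointwise inequality $\La^i\cdot\u_2(y)>\La^i\cdot\u_1(y)$ to a bona fide locally strict critical subsolution is the heart of the argument. Convexity of $H_i$ alone yields only a weak inequality under convex combination (strict convexity is not assumed), so the required strictness must be extracted entirely from the coupling slack, while the perturbation used to materialize it must be fine enough not to spoil the subsolution property of the other $M-1$ equations, for which no coupling slack is a priori available. Balancing these competing demands is precisely where the structural assumptions (H4)--(H6) on $\La$ play their fullest role.
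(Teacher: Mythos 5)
This Proposition is not proved in the paper at all: it is quoted verbatim from \cite[Theorem 5.1]{DaZa}, so there is no internal proof to compare with; what follows assesses your argument on its own terms. Your reduction is fine (normalizing by $\mu\,\uno$, getting equality on a nonempty proper set $E$ and strict inequality on $E^c$, and extracting $i\in E$, $j\notin E$ with $\La_{ij}<0$, hence $\La^i\cdot\u_2(y)>\La^i\cdot\u_1(y)$), and the final propagation-plus-Proposition~\ref{DZ1} scheme is workable. But the step you yourself flag as the ``main obstacle'' is a genuine gap, and the specific mechanism you propose would fail. The convex combination $\u_t=t\u_1+(1-t)\u_2$ carries no visible slack in any single equation (convexity only gives the weak inequality, as you note), and the perturbation $-\eps\,\ei\,\vr$ is doubly harmful: in the $i$-th equation the gradient term $H_i(x,Du_{t,i}-\eps D\vr)$ must be absorbed with no slack available, and in every equation $k\neq i$ with $\La_{ki}<0$ lowering the $i$-th component \emph{increases} $\La^k\cdot\w$ (off-diagonal entries are nonpositive), again with no slack to pay for it. So the construction as written destroys the subsolution property rather than producing local strictness; note also that in your propagation step the sign must go the other way, i.e.\ one \emph{adds} a bump to the component already known to be locally strict, which then benefits every equation $k$ with $\La_{ki}<0$.

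The missing idea is to exploit the order structure at $y$ through the lattice property of subsolutions rather than through convex combinations: for monotone couplings the componentwise maximum of two critical subsolutions is again a critical subsolution, and since $\uno\in\ker\La$ so is $\u_1-\eps\,\uno$. Taking $\w=\max(\u_2,\u_1-\eps\,\uno)$ with $0<\eps<\min_{k\notin E}\bigl(u_{1,k}(y)-u_{2,k}(y)\bigr)$, one has near $y$ that $w_k=u_{2,k}$ for $k\in E$ and $w_k=u_{1,k}-\eps$ for $k\notin E$; then a.e.\ near $y$, for your index $i\in E$, $H_i(x,Dw_i)+\La^i\cdot\w\leq\ga+\sum_{k\notin E}\La_{ik}\bigl(u_{1,k}-u_{2,k}-\eps\bigr)\leq\ga-c$ for some $c>0$, because every summand is nonpositive there and the $k=j$ term is bounded away from $0$. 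This produces the locally strict critical subsolution your plan needs (no maximality or solution property of $\u_1,\u_2$ is even required), after which your irreducibility propagation (with bumps added, as above) and Proposition~\ref{DZ1} give $y\notin\A$ and the contradiction. Without some construction of this kind, the passage from the pointwise coupling inequality at $y$ to local strictness is not established, and the proposal does not yet constitute a proof.
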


\section{Random setting} \label{random}
\parskip +3pt
\subsection{A family of probability measures}\label{family}  To build up
the random frame appropriate  for systems, we
introduce a family of  probability measures defined on $\D$, namely
the space of cadlag paths taking values in $\unoM$ endowed with the
$\sigma$--algebra $\F$, see Appendix \ref{due}. Averaging with
respect to such measures will play a crucial role in the subsequent
analysis. We will more precisely show that the  coupling matrix
$\La$ induces a correspondence between the simplex  $\S$ of
probability vectors of $\R^M$, and a simplex of probability measures
on $\D$.

It is convenient for later use to start by recalling that the family
of cylinders of $\F$, or of  $\F_t$ for any $ t \geq 0 $, is a {\em
semi--ring}.
Namely it contains the empty set, is closed by finite
intersections,
 and the difference of two cylinders is a finite disjoint union of cylinders.
Therefore,  taking into account that  $\F$, $\F_t$ are generated by
cylinders, we get  by the Approximation Theorem for Measures, see
\cite[Theorem 1.65]{Kl}.

\smallskip

\begin{Proposition}\label{klenke}
Let $\mu$ be a finite measure on $\F$. For any $E
\in \F$, there is a sequence $E_n$ of multi--cylinders
{\rm(}see {\rm Terminology \ref{cylinder})} in $\F$ with
\[\lim_n \mu(E_n  \triangle E)=0, \]
where $\triangle$ stands for the symmetric difference. If in
addition $E \in \F_t$ for some  $t \geq 0$, then the approximating
multi--cylinders $E_n$ can be taken in $\F_t$.
\end{Proposition}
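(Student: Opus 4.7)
The plan is to invoke the Approximation Theorem for Measures (\cite[Theorem 1.65]{Kl}) essentially verbatim, once the semi-ring hypothesis has been checked in both the $\F$ and $\F_t$ settings.

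First I would record precisely the content of the cited theorem: if $\mathcal R$ is a semi-ring generating a $\sigma$-algebra $\mathcal A$, and $\mu$ is a finite measure on $\mathcal A$, then for every $E\in\mathcal A$ and every $\eps>0$ there exist finitely many pairwise disjoint sets $R_1,\dots,R_k\in\mathcal R$ whose union $E'=\bigsqcup_{j=1}^k R_j$ satisfies $\mu(E\triangle E')<\eps$. This is exactly the format we need once we identify the semi-ring with cylinders and the finite disjoint unions with multi-cylinders.

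Next I would verify the two hypotheses for the global statement. The text immediately preceding the proposition already records that the family of cylinders is a semi-ring (it contains $\emptyset$, is closed under finite intersections, and differences of cylinders are finite disjoint unions of cylinders), and that $\F$ is by definition generated by the cylinders. With these two facts the Approximation Theorem yields, for each $n\geq 1$, a set $E_n$ that is a finite disjoint union of cylinders—i.e., a multi-cylinder in the sense of Terminology \ref{cylinder}—with $\mu(E_n\triangle E)<1/n$. Taking $n\to\infty$ gives the desired sequence.

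For the refined statement with $E\in\F_t$, I would repeat the same argument replacing the global semi-ring of cylinders by the semi-ring of cylinders contained in $\F_t$. The key small check is that this restricted family is again a semi-ring generating $\F_t$: closure under finite intersections and the disjoint-difference property are inherited from the global case since the cylinders involved only use coordinates at times $\leq t$, and the generation statement is again built into the definition of $\F_t$. Applying the Approximation Theorem to the finite measure $\mu\!\mid_{\F_t}$ then produces approximating multi-cylinders lying in $\F_t$. I do not foresee a genuine obstacle here; the only point requiring care is the bookkeeping that the cylinders appearing in the approximation of an $\F_t$-measurable set can themselves be chosen in $\F_t$, which is immediate once the restricted semi-ring is identified as the correct generating family.
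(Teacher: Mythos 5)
Your proposal is correct and follows the same route as the paper: the statement is obtained directly from the Approximation Theorem for Measures (\cite[Theorem 1.65]{Kl}), using the facts, recorded just before the proposition, that the cylinders (respectively, the cylinders generating $\F_t$) form a semi-ring generating $\F$ (respectively $\F_t$), so that finite disjoint unions of cylinders, i.e. multi-cylinders, approximate any $E$ in measure. Your extra bookkeeping for the $\F_t$ case is exactly the intended reading of the paper's remark that the approximating multi-cylinders can be taken in $\F_t$.
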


\medskip

As a consequence we see  that two finite measures on $\D$ coinciding
on the family of cylinders, are actually equal.

We go on, as announced, by performing a converse construction,
namely by defining through the coupling matrix $\La$, for any $\a
\in \S$,  a suitable function on  cylinders and then uniquely
extending it to a probability measure on $\D$.

For a probability vector $\a \in \R^M$, we define  for any
cylinder $\mathcal C(t_1, \cdots,t_k;j_1, \cdots,j_k)$
\begin{equation}\label{newprobaa}
    \mu_\a (\mathcal C(t_1, \cdots,t_k;j_1, \cdots,j_k)) = \left (\a \,e^{-t_1\La} \right )_{j_1} \,
\prod_{l=2}^{k} \left (e^{-(t_l-t_{l-1})\La} \right )_{j_{l-1}
\,j_l}.
\end{equation}

This function enjoys the following key properties:

\begin{itemize}
    \item[{(i)}] it is, for any $k \in \N$, a probability measure on the family of cylinders
    of the form $\mathcal C(t_1, \cdots,t_k;j_1,
    \cdots,j_k)$ obtained by keeping  $((t_1, \cdots,t_k)$  fixed   and varying $(j_1,
    \cdots,j_k)$  in $\unoM^k$, which is actually    a $\sigma$--algebra being
    in a one--to--one correspondence with the family of all subsets of
    $\unoM^k$;
    \item[{(ii)}] if $(t_{i_1}, \cdots, t_{i_l})$ is a subsequence of $(t_1,
    \cdots,t_k)$ with $l < k$ then for any $(j^*_{i_1},
    \cdots,j^*_{i_l}) \in \unoM^l$
    \[ \mu_\a (\mathcal C(t_{i_1}, \cdots, t_{i_l};j^*_{i_1}, \cdots,j^*_{i_l}) =
    \sum_{(j_1, \cdots,j_k) \in J} \mu_\a(\mathcal C(t_1, \cdots,t_k;j_1,
    \cdots,j_k)), \]
    where
    \[J= \{(j_1, \cdots,j_k) \mid j_{i_m}=j^*_{i_m} \;\hbox{for $m=1,
    \cdots, l$}\}.\]
\end{itemize}
The latter condition is known as the Kolmogorov Consistency Condition
and its  validity in this context depends upon $e^{-s\La}$ being a
stochastic matrix for any $s$, which is in turn equivalent, as
showed  in Proposition  \ref{stochazz}, to requiring {\bf (H4)},
{\bf (H5)} on the coupling matrix $\La$.

We are then in position to use the Kolmogorov Extension Theorem, see
for instance \cite[Theorem 14.36]{Kl}, \cite[Theorem 1.2]{SW}, which
ensures, under the previous conditions (i), (ii),  the
existence of an unique probability measure, denoted by $\Pr_\a$, on
$(\mathcal D, \F)$ which extends $\mu_\a$ on the whole $\F$.

\smallskip

It comes from \eqref{newprobaa}  that the map
\[\a \mapsto \Pr_\a  \qquad\hbox{is linear,}\]
consequently  the measures $\Pr_\a$, for $\a=(a_1, \cdots,a_m)$
varying among probability vector of $\R^M$, make up a {\em simplex
of measures} spanned by $\Pr_i:= \Pr_{\ei}$, for $i \in \unoM$, and
\[\Pr_\a= \sum_{i=1}^{M}\ai \, \Pr_i.\]

 Since by \eqref{newprobaa} the measures
$\Pr_i$ are supported in $\mathcal D_i \in \F_0$ (see \eqref{defDi}
for the definition of $\D_i$), we also deduce
\[\Pr_\a(A)= \sum_{i=1}^{M} \ai \, \Pr_i( A \cap \mathcal
D_i) \qquad\hbox{for any $A \in \F$},\]
and
\begin{equation*}\label{newprobac}
a_i= \Pr_\a(\D_i) \qquad\hbox{for any $i \in \unoM$.}
\end{equation*}

Also notice that  all measures  $\Pr_\a$ corresponding to strictly
positive $\a$ are equivalent in the sense that they have the
 same null sets, and these are  the $E \in \F$ with
\[\Pr_i(E)=0 \qquad\hbox{for any $i$.}\]

\bigskip
\begin{Terminology} \label{termini} By a {\em random variable} we mean  any measurable
map from $(\D,\F)$  to a Polish space endowed with the Borel
$\sigma$--algebra.   A  {\em simple random variable} is one that
takes on finitely many values.
We denote by $\E_\a$ the expectation operators relative to $\Pr_\a$, and  put
for simplicity  $\E_i$ in place of $\E_{\ei}$.
We say that some property  holds almost surely, a.s. for short, if
it is valid up to a $\Pr_\a$--null set,  for some, and consequently
for all  $\a >0$, where $>$ must be understood componentwise.

\end{Terminology}

\medskip

We consider the push--forward of the probability measure $\Pr_\a$,
for any $\a \in \S$, through  the flow $\phi_h$ on $\D$ defined in
\eqref{flow}. For a  cylinder $C:=\mathcal C(t_1, \cdots,t_k;j_1,
\cdots,j_k)$, we have for any $\a \in \S$
\begin{eqnarray*}
  \phi_h \# \Pr_\a (C) &=& \Pr_\a \{ \om \mid \phi_h(\om) \in C\}=
\Pr_\a(\mathcal C(t_1+h, \cdots,t_k+h;j_1, \cdots,j_k)) \\
   &=& \left (\a
\,e^{-(t_1+h)\La} \right )_{j_1} \, \prod_{l=2}^{k-1} \left
(e^{-(t_l-t_{l-1})\La } \right )_{j_l \,j_{l-1}}= \Pr_{\a \, e^{-h\La}} (C),
\end{eqnarray*}
which implies
\[ \phi_h \# \Pr_\a (E) = \Pr_{\a \, e^{-h\La
}} (E) \qquad\hbox{for any $E \in \F$}.\]

We have therefore established:
\smallskip

\begin{Proposition}\label{transferflow} For any $\a \in \S$,
$h \geq 0$,
\[ \phi_h \# \Pr_\a = \Pr_{\a \, e^{-h\La}}.\]
\end{Proposition}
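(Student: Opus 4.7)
The plan is to show agreement of the two measures on the generating family of cylinders, and then invoke the uniqueness principle recorded right after Proposition \ref{klenke} (two finite measures on $\F$ that coincide on cylinders are equal) to conclude.

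First I would verify that the right-hand side $\Pr_{\a\,e^{-h\La}}$ actually makes sense, i.e.\ that $\a\,e^{-h\La}\in\S$. Since $-\La$ generates a semigroup of stochastic matrices by Proposition \ref{stochazz}, the matrix $e^{-h\La}$ is stochastic for every $h\ge0$, hence it maps probability vectors to probability vectors; the vector $\a\,e^{-h\La}$ has nonnegative entries summing to $1$ and therefore belongs to $\S$. Simultaneously $\phi_h\#\Pr_\a$ is a bona fide probability measure on $(\D,\F)$ because $\phi_h$ is $\F$-measurable (this is part of what makes \eqref{flow} a reasonable definition of the shift).

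Next I would carry out the cylinder computation, which is exactly the display already written in the paragraph preceding the proposition. For a cylinder $C=\mathcal C(t_1,\dots,t_k;j_1,\dots,j_k)$, unfolding the definition of $\phi_h$ rewrites $\{\phi_h(\om)\in C\}$ as the cylinder $\mathcal C(t_1+h,\dots,t_k+h;j_1,\dots,j_k)$; formula \eqref{newprobaa} then gives $(\a\,e^{-(t_1+h)\La})_{j_1}\prod_{l=2}^{k}(e^{-(t_l-t_{l-1})\La})_{j_{l-1}j_l}$; finally the semigroup identity $e^{-(t_1+h)\La}=e^{-h\La}e^{-t_1\La}$ factors the first factor as $((\a\,e^{-h\La})\,e^{-t_1\La})_{j_1}$, which is precisely the right-hand side of \eqref{newprobaa} applied with initial distribution $\a\,e^{-h\La}$. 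Thus $\phi_h\#\Pr_\a(C)=\Pr_{\a\,e^{-h\La}}(C)$ for every cylinder.

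Finally I would conclude by the uniqueness statement: both $\phi_h\#\Pr_\a$ and $\Pr_{\a\,e^{-h\La}}$ are finite measures on $\F$, they coincide on the semi-ring of cylinders which generates $\F$, hence they are equal. I do not anticipate any real obstacle here; the only delicate ingredient is the semigroup relation $e^{-(t_1+h)\La}=e^{-h\La}e^{-t_1\La}$ that repackages a time shift in the exponential as left multiplication of the initial vector, and this is a purely algebraic identity. The result then follows without appealing to any probabilistic machinery beyond the Kolmogorov-type uniqueness on cylinders already embedded in Proposition \ref{klenke}.
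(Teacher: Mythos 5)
Your proposal is correct and follows essentially the same route as the paper: the cylinder computation with the semigroup identity $e^{-(t_1+h)\La}=e^{-h\La}e^{-t_1\La}$ applied to formula \eqref{newprobaa}, followed by the uniqueness of finite measures agreeing on cylinders (the consequence of Proposition \ref{klenke}). Your extra checks that $\a\,e^{-h\La}\in\S$ and that $\phi_h$ is measurable (Proposition \ref{soloshift}) are sound and merely make explicit what the paper leaves implicit.
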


\smallskip

Accordingly,  for any measurable function $f: \D \to \R$, we have by
the change of variable formula
\begin{equation}\label{change}
   \E_\a f(\phi_h)  = \int_\D f(\phi_h(\om))\, d \Pr_\a= \int_\D f(\om) \, d \phi_h \#
\Pr_\a =  \E_{\a \, e^{-\La h}} f.
\end{equation}
We consider, for  $t >0$,  the random variables  with values in
$\unoM$ given by the evaluation maps at  $t$, i.e., $\om \mapsto \om(t)$.
By \eqref{newprobaa},
\[\om(t) \# \Pr_\a (i)=  \Pr_\a
(\{\om \mid \om(t) = i\}) = \left (\a \, e^{-t\La} \right )_i\] for
any index $i \in \unoM$, so that
 \begin{equation}\label{newprobaciccio}
\om(t) \# \Pr_\a =  \a \, e^{-t\La}.
\end{equation}
Consequently, if we look at an
$M$--dimensional vector, say $\b$, as a (measurable) function from
$\unoM$ to $\R$, we have
\begin{equation}\label{newprobabb}
   \E_\a b_{\om(t)} = \a \, e^{-t\La} \cdot \b.
\end{equation}

\medskip

Formula \eqref{newprobaciccio}  can be partially recovered  for
measures of the type $\Pr_\a \mres E$ ($\Pr_\a$ restricted to $E$),
where $E$ is any set in $\F$.

\smallskip

 \begin{Lemma}\label{probaproba} For a given $\a \in \S$, $E \in \F_t$ for
 some $t \geq 0$, we have
 \[  \om(s) \# (\Pr_\a \mres E) = \big (  \om(t) \# ( \Pr_\a \mres E) \big )  \,   e^{-(s-t)\La } \quad\hbox{for
    any $s \geq t$}.\]
\end{Lemma}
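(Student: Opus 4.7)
The plan is to establish the identity first for cylinders by direct computation with formula \eqref{newprobaa}, and then extend to arbitrary $E \in \F_t$ by approximation. Throughout, I identify each pushforward $\om(r) \# (\Pr_\a \mres E)$ (for $r\geq 0$) with the row vector of $\R^M$ whose $j$-th entry equals $\Pr_\a(E \cap \{\om(r) = j\})$; from this viewpoint, both sides of the asserted equality are row vectors that depend countably additively on $E$, and the multiplication by $e^{-(s-t)\La}$ on the right is the usual row-by-matrix product compatible with \eqref{newprobaa}.

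First I would treat the cylinder case. Let $E = \mathcal C(t_1, \cdots, t_k; j_1, \cdots, j_k) \in \F_t$, so $t_k \leq t$. By inserting $t$ as an additional time marker and decomposing $E$ as the disjoint union of $\mathcal C(t_1, \cdots, t_k, t; j_1, \cdots, j_k, i)$ over $i \in \unoM$ when $t_k < t$, I may assume $t_k = t$. Then for each $j \in \unoM$,
\[
E \cap \{\om(s)=j\} = \mathcal C(t_1, \cdots, t_k, s; j_1, \cdots, j_k, j),
\]
so \eqref{newprobaa} yields
\[
\Pr_\a\bigl(E \cap \{\om(s)=j\}\bigr) = \Pr_\a(E)\,\bigl(e^{-(s-t)\La}\bigr)_{j_k,\, j}.
\]
On the other hand, since $E \subseteq \{\om(t)=j_k\}$, the vector $\om(t) \# (\Pr_\a \mres E)$ has only the $j_k$-th component nonzero and equal to $\Pr_\a(E)$; multiplying it on the right by $e^{-(s-t)\La}$ produces exactly the row $\Pr_\a(E)\,\bigl(e^{-(s-t)\La}\bigr)_{j_k,\,\cdot}$, matching the previous display entrywise. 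This settles the cylinder case, and finite disjoint unions immediately give the identity on multi-cylinders in $\F_t$ by linearity.

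To extend to general $E \in \F_t$, I would invoke Proposition \ref{klenke} to pick multi-cylinders $E_n \in \F_t$ with $\Pr_\a(E_n \triangle E) \to 0$. Then, componentwise,
\[
\bigl|\Pr_\a(E_n \cap \{\om(r)=j\}) - \Pr_\a(E \cap \{\om(r)=j\})\bigr| \leq \Pr_\a(E_n \triangle E) \longrightarrow 0
\]
for $r \in \{t,s\}$ and every $j \in \unoM$, so each vector identity for $E_n$ passes to the limit (an equivalent route: observe that the class of $E \in \F_t$ for which the identity holds is a $\lambda$-system, and the cylinders form a generating $\pi$-system). The only real delicacy is bookkeeping: verifying the cylinder identity with the correct row-vector convention and splitting cleanly at time $t$. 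No deep obstacle is expected, since the identity is essentially the Markov property that was built into the measures $\Pr_\a$ through the Kolmogorov consistency condition verified in Subsection \ref{family}.
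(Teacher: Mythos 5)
Your proof is correct and follows essentially the same route as the paper: verify the identity on cylinders directly from \eqref{newprobaa}, pass to multi-cylinders in $\F_t$ by additivity, and then to general $E \in \F_t$ via the approximation of Proposition \ref{klenke}. The only cosmetic difference is that you handle the cylinder case by inserting $t$ as an extra time marker and splitting $E$ accordingly, whereas the paper factors through the last cylinder time $t_k$ using the semigroup property $e^{-(t-t_k)\La}e^{-(s-t)\La}=e^{-(s-t_k)\La}$; both amount to the same computation.
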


\begin{proof} We first assume $E$ to be a  cylinder
\[E= \mathcal C(t_1, \cdots, t_k;j_1, \cdots, j_k)\]
for some times and indices. Then the condition $E \in \F_t$ is
equivalent to $ t \geq t_k$.  We have
\[ \om(t_k) \# (\Pr_\a \mres E)(i) = \Pr_\a( E \cap \mathcal C(t_k;i))\]
which implies
\[ \om(t_k) \# (\Pr_\a \mres E) = \Pr_a(E) \, \mathbf e_{j_k}\]
and, according to the definition of $\Pr_\a$ in
\eqref{newprobaa}
\[ \om(s) \# (\Pr_\a \mres E) = \big ( \om(t_k) \# (\Pr_\a \mres
E)\big ) \, e^{-(s-t_k)\La} \quad \hbox{for $s > t_k$.}\]
Consequently,
\[\om(s) \# (\Pr_\a \mres E) = \big ( \om(t_k) \# (\Pr_\a \mres
E)\big ) \, e^{-(t-t_k)\La} \, e^{-(s-t)\La}= \big ( \om(t) \#
(\Pr_\a \mres E)\big ) \, e^{-(s-t)\La}\] for $s \geq t$, as
claimed. The result can be extended by linearity to any
multi--cylinder.

Finally, if $E$ is any set in $\F$, then we consider a sequence of
multi--cylinders $E_n$ in $\F_t$ with $\Pr_\a(E_n \triangle E) \to
0$. By Proposition \ref{klenke},
\[\lim_n \om(s) \# (\Pr_\a \mres E_n) (i)= \lim_n  \Pr_\a( E_n \cap \mathcal C(s;i)) =
\Pr_\a( E \cap \mathcal C(s;i)) = \om(s) \# (\Pr_\a \mres E) (i).
\]
Therefore,
\[
\om(s) \# (\Pr_\a \mres E) =\lim_n  \om(s) \# (\Pr_\a
\mres E_n)  =  \big ( \om(t) \# (\Pr_\a \mres E)\big ) \, e^{-(s-t)\La
}.\qedhere
\]
\end{proof}

\subsection{Stopping times}\label{tre}
 A {\em stopping time}, adapted to $\F_t$, see Appendix \ref{due}, is a nonnegative random variable $\tau$, see
Terminology \ref{termini}, satisfying
\[\{ \tau \leq t\}   \in \mathcal F_t \qquad\hbox{for any $t$,} \]
which also implies $ \{ \tau < t\}, \, \{ \tau = t\}  \in \F_t$.

\medskip

For a bounded random variable $\tau$, we set
  \begin{equation}\label{stoppingbis}
    \tau_n = \sum_j \frac j{2^n} \, \I(\{\tau \in
    [(j-1)/2^n,j/2^n)\}),
\end{equation}
where $\I(\cdot)$ stands for the {\em indicator function} of the set
at the argument, namely the function equal $1$ at any element of the
set and $0$ in the complement. The above sum is finite, being $\tau$
bounded, so the $\tau_n$ are simple stopping times, and letting $n$
go to infinity we get:
\smallskip

\begin{Proposition} \label{stopping} For a bounded stopping time
$\tau$, $\tau_n$  defined as in \eqref{stoppingbis} make up a
sequence of simple stopping times with
\[
\tau_n \geq \tau, \quad
\tau_n \to \tau  \quad\hbox{uniformly in $\D$ as $n\to\infty$.}
\]
\end{Proposition}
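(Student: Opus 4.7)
The plan is to verify in turn the three assertions: that each $\tau_n$ is a simple stopping time, that $\tau_n \geq \tau$ pointwise on $\D$, and that $\tau_n \to \tau$ uniformly.

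First, since $\tau$ is bounded, say by some constant $T$, only the indices $j \leq \lceil 2^n T\rceil$ contribute a nonzero term to the sum defining $\tau_n$. Hence $\tau_n$ takes finitely many values, so it is simple. To see that it is a stopping time adapted to $\F_t$, I would fix $t \geq 0$, set $k = \lfloor 2^n t\rfloor$, and observe that $\tau_n(\om) \leq t$ is equivalent to $\tau_n(\om) \leq k/2^n$, which in turn is equivalent to $\tau(\om) < k/2^n$. Using the standard identity
\[
\{\tau < k/2^n\} = \bigcup_{m \in \N} \{\tau \leq k/2^n - 1/m\},
\]
each set on the right lies in $\F_{k/2^n - 1/m} \subset \F_{k/2^n} \subset \F_t$ because $\tau$ is a stopping time, and the union is countable, so $\{\tau_n \leq t\} \in \F_t$, as required.

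Next, on the event $\{\tau \in [(j-1)/2^n, j/2^n)\}$ one reads directly from the definition that $\tau_n = j/2^n > \tau$, giving (strict) pointwise inequality $\tau_n \geq \tau$; on the same event the gap satisfies $\tau_n - \tau \leq j/2^n - (j-1)/2^n = 1/2^n$. Since these events partition $\D$ (taking also into account the bounded range of $\tau$), I obtain $0 \leq \tau_n(\om) - \tau(\om) \leq 1/2^n$ for every $\om \in \D$, which is uniform convergence as $n \to \infty$.

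The only mildly delicate point is the measurability of $\{\tau_n \leq t\}$, for which one must recognize that the half--open dyadic intervals force a strict inequality $\tau < k/2^n$ rather than a weak one, and then express this strict inequality as a countable union of the sets $\{\tau \leq \cdot\}$ that the stopping--time hypothesis directly controls. Everything else in the proof is straightforward arithmetic on dyadic intervals.
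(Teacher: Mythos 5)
Your proof is correct and follows essentially the same route as the paper, which states the proposition with only a one-line justification (the sum is finite, so the $\tau_n$ are simple stopping times, and the convergence follows by letting $n\to\infty$). Your verification that $\{\tau_n \le t\} = \{\tau < \lfloor 2^n t\rfloor/2^n\} \in \F_t$ and the uniform bound $0 \le \tau_n - \tau \le 2^{-n}$ correctly supply the details the paper leaves implicit.
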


\medskip

We consider a  simple  stopping time of the form
\begin{equation}\label{newtau0}
 \tau= \sum_{j=1}^l t_j \,\I(E_j)
\end{equation}
where  the sequence $t_1, \cdots, t_l$ is strictly increasing and
$E_j$ are mutually disjoint sets of $\F$, in addition $E_j \in
\F_{t_j}$ by the very definition of stopping time. The symbol
$\I(\cdot)$ stands again for the indicator function.

We define
\[F_j= \{\tau \geq t_j\},\]
so that
\begin{equation*}\label{newtau1}
   F_j \in \F_{t_{j-1}} \qquad\hbox{for any $j$.}
\end{equation*}
 It  is clear that
\begin{align}
&E_j = \bigcap_{i=1}^j F_i \setminus F_{j+1}, \qquad
F_1 = \D, \nonumber\\
&
F_j = \D \setminus \bigcup_{i=1}^{j-1} E_i \quad \hbox{for $j >1$}, \qquad
F_l=E_l.
\label{newnewtau3}
\end{align}
We derive  that $\tau$ can be  equivalently expressed as
\begin{equation}\label{newtau3}
    \tau= \sum_{j=1}^l (t_j - t_{j-1} ) \, \I(F_j),
\end{equation}
where  we have set  $t_0=0$ to simplify notations. The two
expression of $\tau$ given by \eqref{newtau0},  \eqref{newtau3} are
different: in  \eqref{newtau0} the sets $E_j$ are mutually disjoint
while in \eqref{newtau3} they are decreasing with  respect to $j$.

\bigskip

For a stopping time $\tau$, we consider the map defined as
\begin{equation}\label{brower}
   \a \mapsto  \om(\tau) \# \Pr_\a,
\end{equation}
since the push--forward  of $\Pr_a$ through $\om(\tau)$ is a
probability measure on $\unoM$, which can be identified with an
element of $\S$,  we see that the relation in \eqref{brower} defines
a map from $\S$ to $\S$ which is, in addition,  linear.  Thanks to
Proposition \ref{stocha1}, it can consequently be represented by a
stochastic matrix,  we will denote analogously  to the case of deterministic times, see \eqref{newprobaciccio} ,
 by $e^{- \La \tau}$, acting on the
right. In other terms
\begin{equation}\label{brower1}
  \a \,  e^{- \tau\La}  = \om(\tau) \# \Pr_\a \qquad \hbox{for any $\a
\in \S$.}
\end{equation}

\subsection{Admissible controls}\label{ammetti}

We call  {\em  control}  any random variable $\Xi$  taking values in
$\DiR$ such that

\begin{itemize}
    \item[(i)] is locally (in time) bounded, i.e. for any $t >0$ there is
    $R >0$ with
    \begin{equation}\label{contr1}
    \sup_{[0,t]} |\Xi(t)| < R \qquad\hbox{ a.s.}
\end{equation}
    \item[(ii)] is {\em nonanticipating}, namely for any $t >0$
    \begin{equation}\label{contr2}
    \om_1=\om_2 \;\hbox{in $[0,t]$} \;\; \Rightarrow  \;\; \Xi(\om_1)=\Xi(\om_2) \;\hbox{in
    $[0,t]$.}
\end{equation}
\end{itemize}

Second condition can be equivalently rephrased   requiring $\Xi$ to
be adapted  to the filtration $\F_t$, namely requiring that $\Xi(t)$
is $\F_t$--measurable for any $t$. In fact, if \eqref{contr2} holds
true then the value of $\Xi(\om)(t)$ just depends on the restriction
of $\om$ to $[0,t]$ which actually implies that $\Xi(t)$ is
$\F_t$--measurable. The converse implication comes from  a version
of Doob--Dynkins Lemma for Polish spaces, see \cite{Ka} Lemma 1.13,
asserting that if the $\sigma$--algebra spanned by a random variable
$\# \,1$ is contained in that spanned by  $\# \,2$ then $\# \,1$ is
a measurable function of $\# \,2$  . In our case
 $\# \,1$ is $\Xi(s)$ for   $s \in [0,t]$  and  $\# \,2$ is
 \[ \om \mapsto \;\hbox{restriction of $\om$ to $[0,t]$}\]
which  takes value in $\D(0,t;\unoM)$.

Being the paths in $\DiR$ right continuous, the condition of being
adapted  implies, see \cite{SW} p. 71, that $\Xi$ is in addition
{\em progressively measurable}, namely, for any $t$ the map
\[ (s,\om) \mapsto \Xi(s,\om)\] from  $[0,t] \times \DiM$ to $\R^N$
is measurable with respect to the $\sigma$--algebras $\mathcal B[0,
t] \times \mathcal \F_t$ and $\mathcal B$, where  $\mathcal B[0,
t]$, $\mathcal B$ denote the family of Borel sets of $[0, t]$ and
$\R^N$ with respect to the natural topology.
 We
will denote by $\K$ the class of admissible controls.

\medskip

For a control  $\Xi$, $\mathcal I(\Xi)$ is also a random variable
with values in $\CiT$,  in addition $\mathcal I(\Xi)$ is adapted and
consequently progressively measurable.

\bigskip

For a time $t$, we say that a control   is {\em piecewise
constant} in $[0,t]$ if it is of  the form
\[  \sum_{k=1}^m X_k \, \I([s_k,s_{k+1})) \qquad\hbox{in $ [0,t)$}\]
for some $\F_{s_k}$--measurable $\R^N$--valued bounded random
variables $X_k$, where
\begin{equation}\label{zadar1}
   \hbox{$s_k$ is an increasing finite sequence with $s_1=0$,
   $s_m=t$}
\end{equation}
and $\I(\cdot)$ is as usual the indicator function.  For any
control $\Xi$ and $s_k$ as in \eqref{zadar1}, then the $\Xi(s_k)$
are
  $\F_{s_k}$--measurable $\R^N$--valued bounded random
variables for any $k$, so that
\[\Xi_0 = \left \{ \begin{array}{ll}
            \sum_{k=1}^m \Xi(s_k) \, \I([s_k,s_{k+1})) & \hbox{in $ [0,t)$} \\
            \Xi & \hbox{in $ [t,+ \infty)$} \\
          \end{array} \right .\]
is a control piecewise constant in $[0,t]$.
 We therefore directly derive from Proposition \ref{piece1}:
\smallskip

\begin{Proposition}\label{piece2} For any control $\Xi$ and $t >0$,  there is a sequence of  controls $\Xi_n$ piecewise constant in $[0,t]$  and locally (in time) uniformly bounded with
\[\Xi_n \to \Xi \qquad\hbox{in the Skorohod sense in $\DiR$, for any $\om$.}\]
\end{Proposition}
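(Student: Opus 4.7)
The statement is pitched as a direct consequence of the deterministic Skorohod approximation result in Proposition \ref{piece1} (cadlag paths in $\mathcal D(0,+\infty;\R^N)$ can be approximated in the Skorohod sense by piecewise constant paths obtained by sampling on a sufficiently fine finite partition of $[0,t]$). The plan is to produce the random approximants $\Xi_n$ by sampling the given control $\Xi$ on a fixed deterministic grid of $[0,t]$ and then to verify \emph{pointwise} in $\om$ that the hypotheses of Proposition \ref{piece1} are met, so that Skorohod convergence holds $\om$ by $\om$. No probabilistic notion is needed, only measurability of each $\Xi_n$ which is granted by the construction already discussed in the text.

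More precisely, first I fix a deterministic refining sequence of partitions of $[0,t]$, say $s^n_k := k\,t/2^n$ for $k=0,\dots,2^n$, with mesh tending to $0$. Following the construction recalled just above the statement, for each $n$ I set
\[
\Xi_n := \sum_{k=1}^{2^n} \Xi(s^n_k)\,\I\bigl([s^n_k, s^n_{k+1})\bigr) \quad\text{in } [0,t),
\qquad \Xi_n := \Xi \quad\text{in } [t,+\infty),
\]
where $s^n_{2^n+1}$ is understood as $t$. Since each $\Xi(s^n_k)$ is $\F_{s^n_k}$--measurable and bounded (by the local boundedness \eqref{contr1} applied on $[0,t]$), the discussion preceding the statement shows that every $\Xi_n$ is an admissible control, piecewise constant in $[0,t]$. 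Local uniform boundedness of the family $\{\Xi_n\}$ in time is inherited from $\Xi$, because on $[0,t]$ the values of $\Xi_n$ are a subset of the values of $\Xi$ on $[0,t]$, and on $[t,+\infty)$ they coincide with $\Xi$.

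It then remains to check that, for each fixed $\om$, the deterministic cadlag path $\Xi_n(\om)$ converges to $\Xi(\om)$ in the Skorohod sense in $\mathcal D(0,+\infty;\R^N)$. This is exactly the content of Proposition \ref{piece1}, applied to the cadlag path $\Xi(\om)$ and the grids $\{s^n_k\}$. The main technical point that one has to check here is that the sampling scheme matches the one in Proposition \ref{piece1}, i.e., that deterministic grids of vanishing mesh are indeed sufficient to produce a Skorohod-approximating sequence of the chosen cadlag path; any mild cosmetic adjustment (for instance, perturbing the grid around the at most countably many jump instants of $\Xi(\om)$) can be handled at the deterministic level and, if needed, absorbed into the statement of Proposition \ref{piece1} without affecting nonanticipation, since the perturbation depends on $\om$ only through values taken before each sampling instant.

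The main obstacle I anticipate is precisely this pointwise Skorohod convergence issue: a fixed deterministic grid can fail to reproduce, in the limit, the jumps of a generic cadlag path if a jump time falls on the grid in a pathological way. This is, however, the very issue that Proposition \ref{piece1} is designed to resolve, so once its statement is invoked the conclusion follows immediately; from the viewpoint of the present proposition all the genuinely new work, namely the nonanticipation, the $\F_{s^n_k}$--measurability of the coefficients, and the local boundedness, is straightforward from the construction. Therefore the proof reduces to assembling these ingredients and invoking Proposition \ref{piece1} $\om$ by $\om$.
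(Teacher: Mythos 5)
Your proposal is correct and takes essentially the same route as the paper: sample $\Xi$ at the left endpoints of a deterministic grid of $[0,t]$ with vanishing mesh, note that the $\F_{s^n_k}$--measurability and boundedness of the sampled values make each $\Xi_n$ an admissible control, piecewise constant in $[0,t]$ and locally (in time) uniformly bounded, and then invoke Proposition \ref{piece1} pointwise in $\om$ for the Skorohod convergence. The only blemish is an indexing slip: the sum should run over $k=0,\dots,2^n-1$ with value $\Xi(s^n_k)$ on $[s^n_k,s^n_{k+1})$, otherwise the initial interval $[0,t/2^n)$ is left uncovered.
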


\bigskip

\section{ An estimate for subsolutions}\label{estimate}

 \parskip +3pt

 For $\al \geq \ga$, an initial point $x$ in
$\T^N$, a bounded stopping time $\tau$ and a control $\Xi$, we
consider in this section the action functional

\begin{equation}\label{action}
    \mathbb E_\a \left [
\int_0^\tau L_{\om(s)}(x + \mathcal I(\Xi)(s),-\Xi(s)) + \al \, ds
\right ].
\end{equation}
\smallskip
Notice that  $I(\Xi)(\tau)$ belongs to $\T^N$ for any $\om$, see
\eqref{proje}. The meaning of the sum between elements of $\T^N$  is
made precise in Notation \ref{notproje}.

\smallskip

We aim at proving:
\smallskip

\begin{Theorem}\label{maxi}  For $\al \geq \ga$, let $\u$, $\tau$, $\Xi$, $\a$
 be a  subsolution to \eqref{HJa},
a bounded stopping time, a control  and
 a probability vector in $\S$, respectively.
 For any initial point $x \in \T^N$,  we have
\begin{equation}\label{max00}
    \E_\a  \big [ u_{\om(0)}(x) - u_{\om(\tau)}(x + \mathcal I(\Xi)(\tau)) \big]  \leq  \mathbb E_\a \left [
\int_0^\tau L_{\om(s)}(x + \mathcal I(\Xi)(s),-\Xi(s)) + \al \, ds
\right ].
\end{equation}
 \end{Theorem}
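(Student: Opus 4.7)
The strategy is to establish a Dynkin--type identity for the process $s\mapsto u_{\om(s)}(\gamma(s))$, with $\gamma(s)=x+\mathcal I(\Xi)(s)$, combine it with Fenchel--Young duality and the a.e.\ form of the subsolution inequality to obtain \eqref{max00} in a simplified setting, and then pass to the general case through the approximation results of Propositions~\ref{stopping} and~\ref{piece2}.

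I would first treat the case in which $\Xi$ is piecewise constant on $[0,T]$, with break points $0=s_0<\cdots<s_m=T$ and bounded $\F_{s_k}$--measurable value $X_k$ on $[s_k,s_{k+1})$, and in which the terminal time is the deterministic constant $T$. The key point is that the marginal weights $\beta_j(s):=(\a\,e^{-s\La})_j$ solve the linear ODE $\beta_j'(s)=-\sum_k\beta_k(s)\La_{kj}$. Since $\u$ is Lipschitz by Proposition~\ref{preli} and $\gamma$ is piecewise affine in $s$, each $s\mapsto \beta_j(s)\,u_j(\gamma(s))$ is absolutely continuous, so an integration by parts on $[s_k,s_{k+1}]$, summation over $j$, and telescoping over $k$ yield the Dynkin identity
\[
\E_\a\bigl[u_{\om(T)}(\gamma(T))-u_{\om(0)}(x)\bigr]
\;=\;\E_\a\!\int_0^T\!\bigl(Du_{\om(s)}(\gamma(s))\cdot\Xi(s)-\La^{\om(s)}\cdot\u(\gamma(s))\bigr)\,ds,
\]
where on each piece the computation is first performed conditional on $\F_{s_k}$ so that $X_k$ can be treated as a frozen constant. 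The a.e./Clarke formulation of the subsolution inequality in Remark~\ref{solu}, combined with Fenchel--Young, then gives, for a.e.\ $y\in\T^N$ and any $p\in\partial u_i(y)$,
\[
-p\cdot\Xi(s)\;\leq\;L_i(y,-\Xi(s))+H_i(y,p)\;\leq\;L_i(y,-\Xi(s))+\al-\La^i\cdot\u(y),
\]
so that applied with $i=\om(s)$, $y=\gamma(s)$ the integrand above is bounded below (a.s.\ in $\om$, a.e.\ in $s$) by $-L_{\om(s)}(\gamma(s),-\Xi(s))-\al$. Rearranging produces \eqref{max00} in the piecewise constant, deterministic--time case.

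To accommodate a simple stopping time $\tau=\sum_j t_j\,\I(E_j)$, I would align its partition with that of $\Xi$ (refining both if necessary) and use the equivalent expression $\tau=\sum_j(t_j-t_{j-1})\I(F_j)$ from \eqref{newtau3}, recalling that $F_j\in\F_{t_{j-1}}$. Telescoping both sides of \eqref{max00} against the indicators $\I(F_j)$ and invoking the conditional Dynkin identity of the previous step on each interval $[t_{j-1},t_j]$, where $\I(F_j)$ is $\F_{t_{j-1}}$--measurable and hence constant under conditioning, reduces to the deterministic--time case already handled. For a general bounded stopping time $\tau$ and a general control $\Xi$, Propositions~\ref{stopping} and~\ref{piece2} furnish approximating simple $\tau_n\downarrow\tau$ uniformly in $\D$ and piecewise constant $\Xi_n\to\Xi$ in the Skorohod sense pathwise, all with a common local bound. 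The equi-Lipschitz estimate of Proposition~\ref{preli}, the right continuity of $\om\in\D$ at $\tau$, the continuity of $\mathcal I(\Xi_n)(\tau_n)$ in $(\Xi_n,\tau_n)$, the continuity and superlinearity of each $L_i$, and dominated convergence transfer \eqref{max00} to the limit.

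The main obstacle is the careful justification of the Dynkin identity when $\u$ is only Lipschitz and the straight lines making up $\gamma$ depend on $\om$ through the random slopes $X_k$: one must use Rademacher's theorem together with Fubini to secure, almost surely in $\om$, the a.e.\ differentiability of $u_{\om(s_k)}$ along the random segment $s\mapsto\gamma(s)$ on $[s_k,s_{k+1}]$, and verify that the resulting gradient qualifies as a Clarke subgradient so that the subsolution inequality of Remark~\ref{solu} is applicable. The advantage of reducing the Markov part to the elementary ODE $\beta'=-\beta\La$ for the marginals is precisely that one avoids any direct manipulation of the jumps of $\om$, in keeping with the paper's self--contained stance on probabilistic machinery.
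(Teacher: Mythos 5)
Your overall architecture is the same as the paper's: establish the estimate first for piecewise constant controls with deterministic (or simple) times, reduce a simple stopping time to deterministic pieces through the sets $F_j=\{\tau\ge t_j\}$ and the expression \eqref{newtau3} (this is exactly Lemma \ref{prethird}), and then pass to general $\tau$ and $\Xi$ via Propositions \ref{stopping} and \ref{piece2} together with a dominated--convergence lemma (Lemma \ref{first}). The gap lies in the core step, on two counts.

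First, the Rademacher--Fubini device cannot give a.e.\ differentiability of $u_i$ along the random segment: for $N\ge 2$ a segment is Lebesgue--null, and Fubini only yields differentiability along almost every translate of a line, whereas your segment starts at the prescribed random point $x+\mathcal I(\Xi)(s_k)$, whose law need not be absolutely continuous (it is a Dirac mass for a deterministic control). Hence the Dynkin relation, stated as an \emph{identity} involving $Du_{\om(s)}(\gamma(s))$, is simply not available for a merely Lipschitz subsolution. What does work, and is what the paper does in Lemma \ref{second}, is to use that the composition $s\mapsto u_i(\gamma(s))$ is Lipschitz, hence differentiable a.e.\ in $s$, and that at a.e.\ such $s$ its derivative equals $p\cdot\dot\gamma(s)$ for a suitable $p\in\partial u_i(\gamma(s))$ in the Clarke sense; since the subsolution inequality of Remark \ref{solu} holds at \emph{every} point and for \emph{every} generalized gradient, one obtains directly the one--sided estimate, which is all that is needed. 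Second, the phrase ``conditional on $\F_{s_k}$ so that $X_k$ can be treated as a frozen constant'' conceals the actual work: to freeze $X_k$ and still control the law of $\om(s)$ on $[s_k,s_{k+1}]$ one needs the Markov--type property for restricted measures, namely $\om(s)\#(\Pr_\a\mres E)=\big(\om(s_k)\#(\Pr_\a\mres E)\big)\,e^{-(s-s_k)\La}$ for $E\in\F_{s_k}$ (Lemma \ref{probaproba}); the paper implements the freezing without conditional probabilities by approximating $X_k$ with simple $\F_{s_{k-1}}$--measurable variables $\sum_r y^n_r\,\I(B^n_r)$ and applying the deterministic--path estimate of Lemma \ref{second} on each set $F_j\cap B^n_r$, then letting $n\to\infty$ (Proposition \ref{third}). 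With these two repairs --- replacing the gradient identity by the Clarke chain--rule inequality, and replacing the informal conditioning by the restricted--measure lemma plus simple--function approximation of the slopes --- your argument becomes the paper's proof.
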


\medskip

The difficulty in proving Theorem \ref{maxi}  is that the two
integrals appearing in \eqref{max00} do not commute  due to the
presence of the random time $\tau$.
It is worthwhile to point out that this difficulty never happen in the study of evolutionary problem
for weakly coupled systems (see \cite[Proposition 2.5]{MiTr2} for more details).
Joint measurability properties
guarantee that the Fubini theorem can be applied in regions where
stopping time is constant. The idea is then to approximate $\tau$ by
a sequence of simple stopping times $\tau_n$  and then exploit the
subsolution property of $u$ separately  in the regions where
$\tau_n$ are constant. We will take advantage of some properties
about
 probability measures  $\Pr_\a$ we have gathered in
Section \ref{random}.

 \medskip
Throughout the section we put $\al =0$ to ease notations.

\medskip

\begin{Lemma}\label{second}  Let $\u$, $\a$ be as in the statement of Theorem
{\rm\ref{maxi}}, we further consider  $t_2 > t_1 \geq 0$, $E\in
\F_{t_1}$, $\xi_0 \in \DiR$, and  $z_0 \in \T^N$. Then
\begin{eqnarray*}
  && \int_E   \big (u_{\om(t_1)}(z_0)-   u_{\om(t_2)}(z_0 + \mathcal I(\xi_0)(t_2-t_1)) \big )  \, d\Pr_\a\\
   &\leq& \int_E \,  \left ( \int_{t_1}^{t_2} L_{\om(s)}(z_0 +
\mathcal I(\xi_0)(s-t_1),-\xi_0(s)) \, ds \right ) \, d\Pr_\a.
\end{eqnarray*}
\end{Lemma}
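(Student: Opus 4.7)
The plan is to perform a deterministic chain-rule calculation along the fixed curve $y(s) := z_0 + \mathcal I(\xi_0)(s-t_1)$ on $[t_1,t_2]$, while absorbing the coupling term $\La^i\cdot\u$ into the time-evolution of the one-dimensional marginal of $\Pr_\a\mres E$ supplied by Lemma \ref{probaproba}. The essential observation is that the Markov-chain rate equation and the PDE coupling are dual to each other, so the $\La$-contributions will cancel exactly.

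First, set $\mu_s := \om(s)\#(\Pr_\a\mres E)$, regarded as a row vector in $\R^M$. Since $E\in\F_{t_1}$, Lemma \ref{probaproba} gives $\mu_s = \mu_{t_1}\,e^{-(s-t_1)\La}$ for $s\geq t_1$, so $\mu_s$ is of class $C^1$ in $s$ with $\mu_s' = -\mu_s\La$. Define
\[
F(s) := \sum_{i=1}^M \mu_s(i)\,u_i(y(s)), \qquad s \in [t_1,t_2],
\]
and note that $\mu_s(i) = \Pr_\a(E\cap\{\om(s)=i\})$ so that
\[F(t_1)-F(t_2) = \int_E \bigl( u_{\om(t_1)}(z_0) - u_{\om(t_2)}(y(t_2))\bigr)\,d\Pr_\a,\]
which is precisely the left-hand side of the lemma.

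Because each $u_i$ is Lipschitz (Proposition \ref{preli}) and $y$ is absolutely continuous on $[t_1,t_2]$ (as the integral of the bounded cadlag $\xi_0$), $F$ is absolutely continuous on $[t_1,t_2]$. The standard a.e.\ chain rule, together with the characterization of subsolutions via Clarke generalized gradients recalled in Remark \ref{solu}, produces at a.e.\ $s$ vectors $p_i(s)\in\partial u_i(y(s))$ with $\frac{d}{ds}u_i(y(s)) = p_i(s)\cdot\dot y(s)$. Differentiating $F$ and using $\mu_s' = -\mu_s\La$ then yields
\[
F'(s) = -\sum_{j=1}^M\mu_s(j)\,\La^j\cdot\u(y(s)) + \sum_{i=1}^M\mu_s(i)\,p_i(s)\cdot\dot y(s).
\]
Applying the Fenchel inequality $L_i(y(s),-\dot y(s))\geq -p_i(s)\cdot\dot y(s) - H_i(y(s),p_i(s))$ combined with the subsolution bound $H_i(y(s),p_i(s)) + \La^i\cdot\u(y(s))\leq 0$ (here the assumption $\al=0$ is used) gives $p_i(s)\cdot\dot y(s) \geq \La^i\cdot\u(y(s)) - L_i(y(s),-\dot y(s))$. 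Substituting, the two $\La$-sums cancel and
\[
F'(s) \geq -\sum_{i=1}^M\mu_s(i)\,L_i(y(s),-\dot y(s)).
\]
Integrating over $[t_1,t_2]$ and invoking Fubini (all integrands being locally bounded on the compact region of interest, since the action is continuous and $\u$, $y$ and $\xi_0$ are bounded on $[t_1,t_2]$) delivers
\[
F(t_1)-F(t_2) \leq \int_{t_1}^{t_2}\!\sum_i\mu_s(i)\,L_i(y(s),-\dot y(s))\,ds = \int_E\!\int_{t_1}^{t_2}L_{\om(s)}(y(s),-\dot y(s))\,ds\,d\Pr_\a,
\]
which is the claimed inequality.

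The only delicate point is the a.e.\ existence of a selection $p_i(s)\in\partial u_i(y(s))$ representing $\frac{d}{ds}u_i(y(s))$ and simultaneously verifying the subsolution inequality; this is the expected main obstacle. It is resolved by the Clarke-gradient reformulation in Remark \ref{solu}, which guarantees that every element of $\partial u_i(y(s))$ already satisfies $H_i(y(s),\cdot) + \La^i\cdot\u(y(s))\leq 0$, so any valid chain-rule selection automatically feeds the Fenchel step above.
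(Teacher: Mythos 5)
Your proof is correct and follows essentially the same route as the paper: the function $F(s)=\mu_s\cdot\u(y(s))$ with $\mu_s=\om(s)\#(\Pr_\a\mres E)=\bigl(\om(t_1)\#(\Pr_\a\mres E)\bigr)e^{-(s-t_1)\La}$ is exactly the quantity the paper differentiates, and the cancellation of the coupling term via $\mu_s'=-\mu_s\La$ together with the Fenchel/Clarke-gradient subsolution inequality, followed by integration, Lemma \ref{probaproba} and Fubini, is the paper's argument verbatim. The only difference is that you spell out the a.e.\ chain-rule selection $p_i(s)\in\partial u_i(y(s))$, which the paper leaves implicit.
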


\begin{proof}
We can assume $z_0=0$ without loosing generality in the proof.
Since $\u$ is a subsolution to \eqref{HJa}, we have
 \begin{equation}\label{second1}
   - p \cdot q \leq L_i(z,-q) + H_i(z,p)\leq  L_i(z,-q)  - \La^i
\, \u(z)
\end{equation}
 for any $i \in \unoM$, $z \in \T^N$, $q \in \R^N$, $p \in \partial
u_i(z)$ (see Remark \ref{solu}).
We define
\[\d = \om(t_1) \#( \Pr_\a \mres  E),\]
and we have for a.e. $s \in (t_1,t_2)$
\begin{align*}
&\frac d{ds}\left ( \left (\d \, e^{- (s-t_1)\La } \right )
\cdot \u( \mathcal I(\xi_0)(s-t_1)) \right ) \\
&=
\left( \left (\d \, e^{- (s-t_1)\La}
\right )
 \cdot \big ( - \La \, \u( \mathcal I(\xi_0)(s-t_1)) +(p^1(s-t_1) \cdot \xi_0(s-t_1), \cdots, p^M(s-t_1) \cdot \xi_0(s-t_1)) \big) \right),
\end{align*}
where $p^i(s-t_1)$ is a suitable element in $\partial u_i(\mathcal I(\xi_0))(s-t_1)$ for any $i$.
Combining this last  equality with \eqref{second1} and setting $\L=
(L_1, \cdots, L_M)$, we deduce
\[- \frac d{ds}\left ( \left (\d \, e^{- (s-t_1)\La } \right )
\cdot \u( \mathcal I(\xi_0)(s)) \right )  \leq \left ( \d \, e^{-
(s-t_1)\La } \right ) \cdot \L( \mathcal I(\xi_0)(s),-\xi_0(s)), \]
and consequently
\begin{eqnarray*}
&&\d \cdot \u(\mathcal I(\xi_0)(t_1))- \d \cdot  e^{- (t_2-t_1)\La } \u( \mathcal I(\xi_0)(t_2)) =
\int_{t_1}^{t_2} - \frac d{ds}\left ( \left (\d \, e^{- (s-t_1)\La }
\right )
\cdot \u( \mathcal I(\xi_0)(s)) \right ) \, ds \\
&&\leq \int_{t_1}^{t_2} \left ( \d \, e^{- (s-t_1)\La} \right )
\cdot (\L( \mathcal I(\xi_0)(s),-\xi_0(s))  \, ds.
\end{eqnarray*}
We have by the definition of $\d$, \eqref{newprobabb},
Lemma \ref{probaproba}, $E \in \F_{t_1}$
\begin{eqnarray*}
   \int_E \big ( u_{\om(t_1)}(\mathcal I(\xi_0)(t_1)) - u_{\om(t_2)}(\mathcal I(\xi_0)(t_2))\big ) \, d\Pr_\a&=&
    \d \cdot (\u(\mathcal I(\xi_0)(t_1)-  e^{- (t_2-t_1)\La } \u(\mathcal I(\xi_0)(t_2))\\
 \int_E  \, L_{\om(s)}( \mathcal I(\xi_0)(s),-\xi_0(s))  &=& \left ( \d \, e^{- (s-t_1)\La } \right )
\cdot (\L( \mathcal I(\xi_0)(s),-\xi_0(s))
\end{eqnarray*}
 for any $s$ in $[t_1,t_2]$. By
plugging these relations in the last inequality and using the Fubini theorem, we get
\[ \int_E \big ( u_{\om(t_1)}(\mathcal I(\xi_0)(t_1)) - u_{\om(t_2)}(\mathcal I(\xi_0)(t_2))\big ) \, d\Pr_\a
 \leq \int_E \left ( \int_{t_1}^{t_2}
 (\dot{}L_\om( \mathcal I(\xi_0),-\xi_0)  \, ds \right ) d \Pr_\a.\qedhere
\]
\end{proof}

\bigskip

\begin{Lemma}\label{first} For a control $\Xi$ and a bounded stopping time $\tau$,  let $\Xi_n$, $\tau_n$
 be  sequences of  controls  and bounded stopping times,
respectively, with
\begin{eqnarray} \label{first00}
  \Xi_n &\to& \Xi \qquad\hbox{a.s. with respect to Skorohod metric}\\
  \label{first00bis}
  \tau_n &\to& \tau \qquad\hbox{uniformly in $\D$}\\
  \nonumber
  \tau_n & \geq &\tau \qquad\hbox{a.s. for any  $n$.}
\end{eqnarray}
Assume in addition  that  for any   $T>0$, there is  $R= R(T)>0$
with
\begin{equation}\label{first0}
\sup_{s\in[0,T]} |\Xi_n(s)| <  R \quad\hbox{a.s. for any
$n$ }
\end{equation}
Then
\[ \mathbb E_\a
\left [ \int_0^{\tau_n} L_{\om}(x + \mathcal I(\Xi_n),-\Xi_n) \, ds
\right ]
\] converges in $\R$  to
\[ \mathbb E_\a
\left [ \int_0^\tau L_\om(x + \mathcal I(\Xi),-\Xi)  \, ds \right ]
\]
and
\begin{equation}\label{first33}
    \E_\a  \big[ u_{\om(0)}(x) - u_{\om(\tau_n)}(x + \mathcal I(\Xi_n)(\tau_n) \big
  ] \to  \E_\a  \big [ u_{\om(0)}(x) - u_{\om(\tau)}(x + \mathcal I(\Xi)(\tau) \big
  ]
\end{equation}
for any  $x \in \R^N$, $\a \in \S$.
\end{Lemma}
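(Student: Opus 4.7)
The plan is to establish both limits by pathwise (a.s.) convergence and then pass to expectations via dominated convergence under $\Pr_\a$. Fix $T>0$ larger than $\sup_\om \tau$ and $\sup_\om \tau_n$ for every $n$ (possible because $\tau$ is bounded and \eqref{first00bis} gives uniform convergence), and let $R=R(T)$ be the bound from \eqref{first0}. Since $\mathcal I(\Xi_n)$ is $\T^N$-valued and $|\Xi_n(s)|\le R$ on $[0,T]$ a.s., the continuity of each $L_i$ produces a deterministic constant $C=C(R,T)$ with $\bigl|L_{\om(s)}(x+\mathcal I(\Xi_n)(s),-\Xi_n(s))\bigr|\le C$ a.s.\ for $s\in[0,T]$, uniformly in $n$; likewise each $u_i$ is bounded on $\T^N$. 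These uniform bounds will furnish the majorants.

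For the action term, I would fix $\om$ in the full-measure set where $\Xi_n(\om)\to\Xi(\om)$ in the Skorohod topology. Since $\Xi(\om)$ is cadlag, it has at most countably many jumps on $[0,T]$, so $\Xi_n(\om)(s)\to\Xi(\om)(s)$ at every continuity point of $\Xi(\om)$, hence Lebesgue-a.e.\ on $[0,T]$. The primitives $\mathcal I(\Xi_n)(\om)$ are uniformly $R$-Lipschitz, and $\int_0^s\Xi_n(\om)(r)\,dr\to\int_0^s\Xi(\om)(r)\,dr$ pointwise in $s$ by the bounded convergence theorem; equicontinuity then lifts this to uniform convergence of $\mathcal I(\Xi_n)(\om)$ to $\mathcal I(\Xi)(\om)$ on $[0,T]$. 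Continuity of $L_i$ now gives
$$L_{\om(s)}\bigl(x+\mathcal I(\Xi_n)(s),-\Xi_n(s)\bigr)\longrightarrow L_{\om(s)}\bigl(x+\mathcal I(\Xi)(s),-\Xi(s)\bigr)\quad\text{for a.e.\ }s\in[0,T].$$
Bounded convergence on $[0,\tau(\om)]$, combined with the elementary estimate $\bigl|\int_0^{\tau_n}-\int_0^{\tau}\bigr|\le C\,|\tau_n-\tau|$ that absorbs the mismatch in upper limits via \eqref{first00bis}, yields pathwise convergence of the two action integrals; a second application of dominated convergence (majorant $CT$) under $\Pr_\a$ then delivers the first claim.

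For the terminal term $u_{\om(\tau_n)}(x+\mathcal I(\Xi_n)(\tau_n))$, the uniform convergence $\mathcal I(\Xi_n)\to\mathcal I(\Xi)$ on $[0,T]$, combined with $\tau_n\to\tau$ uniformly, forces $\mathcal I(\Xi_n)(\tau_n(\om))\to\mathcal I(\Xi)(\tau(\om))$. The delicate point is the index $\om(\tau_n(\om))$: here I would exploit that each $\om$ is a piecewise constant cadlag path in $\unoM$ with only finitely many jumps on $[0,T]$, so right-continuity of $\om$ at $\tau(\om)$ produces $\varepsilon>0$ with $\om\equiv\om(\tau(\om))$ on $[\tau(\om),\tau(\om)+\varepsilon)$. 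Because $\tau_n\geq\tau$ and $\tau_n\to\tau$, eventually $\tau_n(\om)\in[\tau(\om),\tau(\om)+\varepsilon)$, whence $\om(\tau_n(\om))=\om(\tau(\om))$. Continuity of each $u_i$ then closes the pathwise convergence, and a final dominated convergence (majorized by $\sup_i\|u_i\|_\infty$) yields \eqref{first33}.

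The main obstacle is the terminal term: without the one-sided assumption $\tau_n\geq\tau$, the index $\om(\tau_n)$ could fail to converge to $\om(\tau)$ on any $\om$ for which $\tau(\om)$ lands on a jump time of $\om$, and no two-sided Skorohod approximation repairs this. The hypothesis $\tau_n\geq\tau$ is precisely what pairs with right-continuity of cadlag paths to force the eventual stabilization $\om(\tau_n)=\om(\tau)$ and thereby make dominated convergence available.
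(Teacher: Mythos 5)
Your proof is correct and follows essentially the same route as the paper: pathwise convergence of the integrands (Skorohod convergence plus uniform boundedness giving a.e.\ pointwise convergence, continuity of $\mathcal I(\cdot)$ and of the $L_i$), control of the mismatch $\int_\tau^{\tau_n}$ by the uniform bound and $\tau_n\to\tau$ uniformly, and dominated convergence under $\Pr_\a$. The only difference is that you spell out the convergence of the terminal term, in particular why $\tau_n\geq\tau$ and right-continuity of $\om$ force $\om(\tau_n)=\om(\tau)$ eventually, a point the paper leaves implicit in its remark that \eqref{first33} ``can be proved similarly''.
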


\begin{proof}
 We set $x=0$. We know that conditions \eqref{first00}  \eqref{first0} hold true  outside a $\Pr_\a$--null set
 denoted by  $\D'$. If $\om \in \D \setminus \D'$ the $\Xi_n(\om)$ are uniformly bounded in $[0,\tau(\om)]$, and   we derive from
 \eqref{ciccio1}, \eqref{ciccio21}   that $\Xi_n(\om)$ converges
 pointwise a.e. in $[0,\tau(\om)]$ to $\Xi(\om)$.  Taking  also into account the
 continuity of $L$
and  $\mathcal I(\cdot)$, see   Proposition \ref{contJ}, we get
through the dominated convergence theorem
 \begin{equation}\label{firstgiu}
    \int_0^\tau L_{\om}( \mathcal I(\Xi_n),-\Xi_n) \, ds \;\; \longrightarrow  \;\; \int_0^\tau L_{\om}
( \mathcal I(\Xi),-\Xi) \, ds \quad\hbox{a.s.}
\end{equation}
 Let $T$ be such that $\tau \leq
T$ a.s.,  by \eqref{first0}
\[\max_{s\in[0,T]} |\mathcal I(\Xi_n)(s)| < R \, T \qquad\hbox{for any $n$, outside  a $\Pr_\a$--null set,}\] and
consequently the sequence
\[\int_0^\tau L_{\om}( \mathcal I(\Xi_n),-\Xi_n) \, ds\]
is a.s. uniformly bounded.  Taking also  \eqref{firstgiu} into account,
we can thus obtain the claimed convergence with $\tau$ in place of
$\tau_n$ in the approximating sequence, again via the dominated
convergence theorem. We further have
\begin{eqnarray*}
 \left | \int_0^{\tau_n} L_{\om}( \mathcal I(\Xi_n),-\Xi_n) \, ds
 -\int_0^{\tau} L_{\om}( \mathcal I(\Xi_n),-\Xi_n) \, ds  \right |\leq
 \int_{\tau}^{\tau_n }  |L_{\om}( \mathcal I(\Xi_n),-\Xi_n)| \, ds
\end{eqnarray*}
Owing to \eqref{first00bis}  and  the uniformly boundedness property of
the integrand, the right hand--side of the above formula becomes
infinitesimal, as $n$ goes to infinity, uniformly in $\om$ so that
\[ \E_\a  \; \left[  \left | \int_0^{\tau_n} L_{\om}( \mathcal I(\Xi_n),-\Xi_n) \, ds
    -\int_0^{\tau} L_{\om}( \mathcal I(\Xi_n),-\Xi_n) \, ds  \right | \; \right
    ] \to 0.\]
This shows the  first  convergence in the statement. Limit relation
\eqref{first33} can be proved similarly using the continuity of $u$
in $\T^N$.
\end{proof}

\medskip

\begin{Lemma}\label{prethird} Assume
\begin{equation}\label{newstop}
    \tau= \sum_{j=1}^l t_j \, \I(E_j)
\end{equation}
to be a simple stopping time, with the $t_j$ making up an increasing
sequence of times,  and set $F_j= \{\tau \geq t_j\}$ for any $j \in
\{1, \cdots, l\}$. Let $\u$, $\Xi$, $\a$, $x$ be as in the statement
of Theorem \ref{maxi}, then
\begin{eqnarray*}
&&\E_\a\left[\int_0^\tau L_\om ( x +\mathcal I(\Xi), - \Xi) + \al \, ds \right]
=
\sum_{j=1}^l
\int_{F_j} \left ( \int_{t_{j-1}}^{t_j}  L_\om ( x + \mathcal I
(\Xi), - \Xi)  + \al \,
ds \right ) \, d\Pr_\a , \\
&& \E_\a \big[u_{\om(0)}(x) - u_{\om(\tau)} (x + \mathcal I(\Xi(\tau))) \big] \\
&&\qquad\qquad\qquad\qquad\qquad=
\sum_{j=1}^l
\int_{F_j} \big ( u_{\om(t_{j-1})}( x + \mathcal I(\Xi(t_{j-1}))) -
u_{\om(t_j)}(x + \mathcal I(\Xi(t_j))) \big ) \, d \Pr_\a  .
\end{eqnarray*}
\end{Lemma}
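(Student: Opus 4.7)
The plan is a bookkeeping argument: partition $\D$ by the level sets of $\tau$, and swap the order of a finite double sum. The defining identity \eqref{newstop} together with $F_1=\D$ from \eqref{newnewtau3} forces $\D=E_1\sqcup\cdots\sqcup E_l$, with $\tau\equiv t_j$ on $E_j$; and the same relations deliver the reindexing $F_k=\bigsqcup_{j\geq k}E_j$. This single combinatorial fact is the engine of both identities.

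For the first identity I would set $g(s,\om):=L_{\om(s)}(x+\mathcal I(\Xi)(s),-\Xi(s))+\al$ and split, on each $E_j$, the time integral as $\int_0^{t_j}=\sum_{k=1}^j\int_{t_{k-1}}^{t_k}$ (with the convention $t_0=0$). Then
\begin{align*}
\E_\a\!\left[\int_0^\tau g\, ds\right]
&=\sum_{j=1}^l\int_{E_j}\sum_{k=1}^j\int_{t_{k-1}}^{t_k} g\, ds\, d\Pr_\a \\
&=\sum_{k=1}^l\int_{\bigsqcup_{j\geq k}E_j}\int_{t_{k-1}}^{t_k} g\, ds\, d\Pr_\a
=\sum_{k=1}^l\int_{F_k}\int_{t_{k-1}}^{t_k} g\, ds\, d\Pr_\a,
\end{align*}
the first line being the definition of $\tau$ on each $E_j$, and the second line the exchange of the two finite summations together with the identity for $F_k$.

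For the second identity I would use the same partition preceded by a telescoping step: on $E_j$ we have $\om(\tau)=\om(t_j)$ and $\mathcal I(\Xi)(\tau)=\mathcal I(\Xi)(t_j)$, so with $t_0=0$ and $\mathcal I(\Xi)(0)=0$,
\[u_{\om(0)}(x)-u_{\om(\tau)}(x+\mathcal I(\Xi)(\tau))=\sum_{k=1}^j\Bigl(u_{\om(t_{k-1})}(x+\mathcal I(\Xi)(t_{k-1}))-u_{\om(t_k)}(x+\mathcal I(\Xi)(t_k))\Bigr).\]
Integrating over $E_j$, summing in $j$, and swapping the order of summation exactly as above yields the claim. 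I do not anticipate a genuinely hard step; the only technical point is checking that $g$ and the telescoping summands are jointly $(s,\om)$--measurable so that Fubini is legitimate block-by-block on $E_j\times[t_{k-1},t_k]$, which is immediate from the progressive measurability of $\Xi$ noted in Subsection \ref{ammetti} and the continuity of $L$ and $\mathcal I(\cdot)$. Everything else is bookkeeping over finitely many indices.
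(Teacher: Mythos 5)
Your argument is correct and is essentially identical to the paper's own proof: both decompose $\E_\a$ over the level sets $E_j$ where $\tau\equiv t_j$, split the time integral (respectively telescope the difference of $u$-values) over the blocks $[t_{k-1},t_k]$, and then swap the two finite sums, identifying $\bigcup_{i\geq k}E_i=F_k$ via \eqref{newnewtau3}. Your added remark on joint measurability for the block-by-block Fubini step is a harmless refinement of the same bookkeeping.
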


\begin{proof} We set $t_0=0$ and
\[I = \E_\a \, \left[\int_0^\tau L_\om ( x +\mathcal I(\Xi), - \Xi) + \al \,
ds\right].\] Taking into account the definition of $\tau$ and that the
$t_i$ are monotone, we have
\begin{eqnarray*}
  I &=& \sum_{i=1}^l \int_{E_i} \int_{0}^{t_i} L_\om ( x +\mathcal I(\Xi), - \Xi) + \al \,
ds  = \sum_{i=1}^l \sum_{j=1}^i \int_{E_i} \int_{t_{j-1}}^{t_j}
L_\om ( x +\mathcal I(\Xi), - \Xi) + \al \, ds\\
&=& \sum_{j=1}^l \sum_{i \geq j} \int_{E_i} \int_{t_{j-1}}^{t_j}
L_\om ( x +\mathcal I(\Xi), - \Xi) + \al \, ds
\end{eqnarray*}
and, owing to \eqref{newnewtau3}
\[ \sum_{i \geq j} \int_{E_i} \int_{t_{j-1}}^{t_j} L_\om ( x +\mathcal I(\Xi), - \Xi) + \al \,
ds  = \int_{F_j} \int_{t_{j-1}}^{t_j} L_\om ( x +\mathcal I(\Xi), -
\Xi) + \al \, ds\] for any $j \in \{1, \cdots, l\}$. Therefore,
summing over $j$ we get
\[ I= \sum_{j=1}^l
\int_{F_j} \left ( \int_{t_{j-1}}^{t_j}  L_\om ( x + \mathcal I
(\Xi), - \Xi)  + \al \, ds \right ) \, d\Pr_\a\] as desired. The
second equality in the statement can be proved along the same lines,
we provide some detail for readers' convenience.  We start defining
\[ J= \E_\a \big[ u_{\om(0)}(x) - u_{\om(\tau)} x + \mathcal I(\Xi(\tau))) \big
], \] then we have
\begin{eqnarray*}
  J &=& \sum_{i=1}^l
\int_{E_i} \big ( u_{\om(0)}( x ) - u_{\om(t_j)}(x + \mathcal
I(\Xi(t_j))) \big ) \, d \Pr_\a \\ &=& \sum_{i=1}^l \sum_{j=1}^i
\int_{E_i} \big ( u_{\om(t_{j-1})}( x  + \mathcal I(\Xi(t_{j-1}) -
u_{\om(t_j)}(x + \mathcal
I(\Xi(t_j))) \big ) \, d \Pr_\a\\
&=& \sum_{j=1}^l \sum_{i \geq j} \int_{E_i} \big ( u_{\om(t_{j-1})}(
x  + \mathcal I(\Xi(t_{j-1}) - u_{\om(t_j)}(x + \mathcal
I(\Xi(t_j))) \big ) \, d \Pr_\a
\end{eqnarray*}
and, again exploiting \eqref{newnewtau3}
\begin{eqnarray*}
   && \sum_{i \geq j} \int_{E_i} \big ( u_{\om(t_{j-1})}(
x  + \mathcal I(\Xi(t_{j-1}) - u_{\om(t_j)}(x + \mathcal
I(\Xi(t_j))) \big ) \, d \Pr_\a \\
   &=&
    \int_{F_j} \big ( u_{\om(t_{j-1})}(
x  + \mathcal I(\Xi(t_{j-1}) - u_{\om(t_j)}(x + \mathcal
I(\Xi(t_j))) \big ) \, d \Pr_\a
\end{eqnarray*}
 for any $j \in \{1, \cdots, l\}$.
We conclude the proof summing over $j$.

\end{proof}

\medskip

\begin{Proposition}\label{third} The assertion of Theorem {\rm\ref{maxi}} is true if
we take the stopping time $\tau$ simple, say of the form
\eqref{newstop},  and the control $\Xi$ piecewise constant in $[0,
T]$ for some $T \geq t_l$.
\end{Proposition}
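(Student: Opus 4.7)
The plan is to reduce the claim, which mixes a random control and a random time, to finitely many applications of the deterministic Lemma \ref{second}. By Lemma \ref{prethird}, both sides of \eqref{max00} split as sums over $j=1,\dots,l$ of integrals over $F_j$; hence it suffices to prove, for every $j$,
\[
\int_{F_j}\bigl(u_{\om(t_{j-1})}(x+\mathcal I(\Xi)(t_{j-1}))-u_{\om(t_j)}(x+\mathcal I(\Xi)(t_j))\bigr)\,d\Pr_\a\leq\int_{F_j}\int_{t_{j-1}}^{t_j}L_{\om(s)}(x+\mathcal I(\Xi)(s),-\Xi(s))\,ds\,d\Pr_\a.
\]
Because $F_j=\{\tau\geq t_j\}=\{\tau\leq t_{j-1}\}^c\in\F_{t_{j-1}}$, the event $F_j$ is $\F_{t_{j-1}}$-measurable, which is the regularity needed to apply Lemma \ref{second} on sub-intervals of $[t_{j-1},t_j)$.

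Next, since $\Xi$ is piecewise constant on $[0,T]$, I would refine its partition so that all the $t_j$'s appear among the break-points $0=s_0<s_1<\dots<s_m=T$, and write $\Xi\equiv X_k$ on $[s_{k-1},s_k)$, with $X_k$ bounded and $\F_{s_{k-1}}$-measurable. Each $X_k$ is a uniform a.s.\ limit of simple $\F_{s_{k-1}}$-measurable random variables $X_k^n$, uniformly bounded in $n$; the induced piecewise constant controls $\Xi^n$ converge to $\Xi$ in the Skorohod sense for every $\om$. Lemma \ref{first} (with $\tau_n\equiv\tau$) then shows that both sides of the desired inequality are continuous along this approximation, so it is enough to establish the estimate under the extra assumption that each $X_k$ is simple, i.e.\ takes only finitely many values.

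The core of the argument is the following. Fix $j$, and set $Z_k:=x+\mathcal I(\Xi)(s_{k-1})$, which is $\F_{s_{k-1}}$-measurable and simple. For each $k$ with $[s_{k-1},s_k)\subseteq[t_{j-1},t_j)$ and each value $(z,v)$ in the finite range of $(Z_k,X_k)$, define
\[
E_{z,v}:=F_j\cap\{Z_k=z,\,X_k=v\}\in\F_{s_{k-1}},
\]
using $F_j\in\F_{t_{j-1}}\subseteq\F_{s_{k-1}}$. Apply Lemma \ref{second} with $t_1=s_{k-1}$, $t_2=s_k$, $z_0=z$, $\xi_0$ the deterministic constant path equal to $v$, and $E=E_{z,v}$: on $E_{z,v}$ one has $x+\mathcal I(\Xi)(s)=z+v(s-s_{k-1})=z_0+\mathcal I(\xi_0)(s-s_{k-1})$ for $s\in[s_{k-1},s_k]$ and $\Xi(s)=\xi_0(s)$, so the conclusion of Lemma \ref{second} is exactly the required local estimate restricted to $E_{z,v}$. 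Summing over the finitely many values $(z,v)$, then over $k$ (which telescopes the $u$-differences across $[t_{j-1},t_j]$), and finally over $j$, yields \eqref{max00}.

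The main obstacle is that the integrand of the action functional mixes the randomness coming from $\Xi$ with that of the stopping time $\tau$, so neither Fubini nor a direct change of variable applies on the full domain. The resolution is precisely the two-step reduction above: first, approximation by simple piecewise constant controls localises the randomness of $\Xi$ to a finite conditioning; second, the partition of each $F_j$ by the values of the $\F_{s_{k-1}}$-measurable pair $(Z_k,X_k)$ isolates events on which the problem becomes genuinely deterministic and Lemma \ref{second} can be invoked verbatim.
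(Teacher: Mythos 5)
Your proof is correct and takes essentially the same route as the paper's: decompose both sides via Lemma \ref{prethird}, reduce to simple $\F_{s_{k-1}}$--measurable data by approximation (a variant of Lemma \ref{first}), apply Lemma \ref{second} on events of the form $F_j\cap\{\cdot\}\in\F_{s_{k-1}}$, and telescope over $k$ and $j$. Your extra conditioning on the position $Z_k$, which makes the base point $z_0$ in Lemma \ref{second} genuinely deterministic on each event, is only a minor (and welcome) tightening of a point the paper treats more briskly.
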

\begin{proof}  Since $T \geq t_l$, we can  assume that   $\Xi$ has
the form
 \[\Xi= \sum_{k=1}^m X_k \, \I([s_{k-1},s_{k})) \qquad \hbox{in $[0,t_l)$,}\]
 where  $X_k$ are $\R^N$--valued random variables and $s_k$ is a finite increasing sequence with
  $s_0=0$ and
 $s_m=t_l$; we can assume  in addition that all the times $t_j$, $j=1, \cdots ,l$
 belong to the sequence.  Consequently,   it can be univocally  associated to
 any interval $[s_{k-1},s_{k})$ an index  $j$ such that
  $[s_{k-1},s_k) \subset [t_{j-1},t_{j})$. Due to the nonanticipating character of $\Xi$
 \begin{equation}\label{third1}
  \hbox{$X_k$ is $\F_{s_{k-1}}$--measurable.}
\end{equation}

We fix   indices $k$, $j$  such that  $[s_{k-1},s_{k})$ is contained
in $[t_{j-1},t_{j})$,  by \eqref{third1}  there is a sequence of
simple $\F_{s_{k-1}}$--random
 variables
 \[ Y_n= \sum_r y^n_r \, \I(B^n_r)\]
 taking values in $\R^N$ and converging a.s. to $X_k$, see \cite[Theorem 1.4.4]{Ku}, with
 $y^n_r \in \R^N$ and $B^n_r \in \F_{s_{k-1}}$ for any $n$.
 Then, slightly modifying the argument in  Lemma \ref{first}, we get that
 \begin{equation}\label{third2}
    \int_{F_j} \int_{s_{k-1}}^{s_{k}} L_{\om(s)}(\mathcal I(\Xi(s_{k-1}))
    + \mathcal I( Y_n)(s-s_{k-1}),- Y_n) \, ds
\end{equation}
converges to
\[\int_{F_j} \int_{s_{k-1}}^{s_{k}} L_{\om(s)}
(\mathcal I(\Xi(s_{k-1})) + \mathcal I( X_k)(s-s_{k-1}),-  X_k) \,
ds\]
 as $n$ goes to infinity, and similarly
 \begin{equation}\label{third3}
\int_{F_j}   \big (u_{\om(s_{k-1})}(\mathcal I(\Xi(s_{k-1}))-
u_{\om(s_{k})}(\mathcal I(\Xi(s_{k-1}) +
 \mathcal I(Y_n)(s_{k}-s_{k-1}) ))) \big )
\end{equation}
converges to
\[\int_{F_j}   \big (u_{\om(s_{k-1})}(\mathcal I(\Xi(s_{k-1}))-
u_{\om(s_{k})}(\mathcal I(\Xi(s_{k-1}) +
 \mathcal I(X_k)(s_{k}-s_{k-1})))) \big )\]

  Due to the form of $Y_k$, the integral
 in \eqref{third2}, \eqref{third3}  can be in turn written as
\begin{align*}
&\sum_r  \int_{F_j \cap B^r_n} \int_{s_{k-1}}^{s_{k}} L_{\om(s)}(\mathcal I(\Xi(s_{k-1})) +  y^n_r \, (s-s_{k-1}),-y^n_r) \, ds \, d \Pr_\a,\\
&\sum_r \int_{F_j \cap B^r_n}   \big\{u_{\om(s_{k-1})}\big(\mathcal I(\Xi(s_{k-1}))-
u_{\om(s_{k})}(\mathcal I(\Xi(s_{k-1}) +
 y^n_r \, (s_{k}- s_{k-1})))\big) \big\}  \, d\Pr_\a,
\end{align*}
respectively.
Since $F_j \in \F_{t_{j-1}}$, $B^r_n \in \F_{s_{k-1}}$ and $s_{k-1}
\geq t_{j-1}$, we deduce  $F_j \cap B^r_n \in \F_{s_{k-1}}$, and we
can apply Lemma \ref{second} to any term of the previous sum. This
yields
\begin{align*}
&\int_{F_j \cap B^r_n}   \big (u_{\om(s_{k-1})}(\mathcal
I(\Xi(s_{k-1}))- u_{\om(s_{k})}(\mathcal I(\Xi(s_{k-1}) +
y^n_r \, (s_{k}- s_{k-1})))) \big )  \, d\Pr_\a \\
\leq &
\int_{F_j \cap B^r_n} \int_{s_{k-1}}^{s_{k}} L_{\om(s)}(\mathcal I(\Xi(s_{k-1})) +  y^n_r \,
   (s-s_{k-1}),
   - y^n_r) \, ds \, d \Pr_\a
\end{align*}
for any $r$.
By summing over $r$ and passing to the limit as $n$
goes to infinity, we further get
\begin{align*}
&\int_{F_j}   \big (u_{\om(s_{k-1})}(\mathcal I(\Xi(s_{k-1}))-
u_{\om(s_{k})}(\mathcal I(\Xi(s_{k-1}) +
 \mathcal I(X_k)(s_{k}- s_{k-1})))) \big )  \, d\Pr_\a  \\
\leq &
\int_{F_j } \int_{s_{k-1}}^{s_{k}} L_{\om(s)}(\mathcal I(\Xi(s_{k-1})) + \mathcal I(X_{k-1})(s- s_{k-1}),-
   X_k) \, ds \, d \Pr_\a.\end{align*}
By summing all inequalities as above corresponding to intervals
$[s_{k-1},s_{k})$ in $[t_{j-1},t_{j})$ we obtain
\begin{align*}
&\int_{F_j}   \big (u_{\om(t_{j-1})}(\mathcal I(\Xi(t_{j-1}))-
u_{\om(t_{j})}(\mathcal I(\Xi(t_{j})))) \big )  \, d\Pr_\a
\leq
\int_{F_j } \int_{t_{j-1}}^{t_{j}} L_{\om(s)}(\mathcal I(\Xi(s)) ,-\Xi(s)) \, ds
\, d \Pr_\a.
\end{align*}

We conclude the proof summing over $j$ and exploiting Lemma
\ref{prethird}.
\end{proof}

\begin{proof}[{\bf Proof of the Theorem \ref{maxi}}] \;
By Proposition \ref{stopping}  $\tau$ can be approximated uniformly
in $\om$ by a sequence of simple stopping times $\tau_n$ with
$\tau_n \geq \tau$ and $\tau_n \leq T$ for some constant $T$, in
addition by Proposition \ref{piece2} $\Xi$ can be approximated a.s.
with respect to Skorohod metric    by a sequence of control $\Xi_n$
piecewise constant  in $[0,T]$ and and locally (in time)
uniformly bounded.

Owing to Proposition  \ref{third},  inequality  \eqref{max00} holds
true if we replace $\tau$, $\Xi$ by $\tau_n$, $\Xi_n$, respectively,
for any $n$. We conclude by  passing at the limit as $n$ goes to
infinity and exploiting Lemma \ref{first}.
\end{proof}

\medskip

\smallskip

\begin{Notation}\label{kappa}
For a bounded stopping time $\tau$ and  a pair $x$, $y$ of elements
of $\T^N$, we set
  \[\K(\tau,y-x)=
  \left \{ \Xi \in \K \mid \mathcal I(\Xi)(\tau)  = y-x \;\hbox{a.s.} \right \}, \]
 notice that both $I(\Xi)(\tau)$ and $y-x$ are elements of $\T^N$, see
 \eqref{proje} and refer to Notation \ref{notproje} for the meaning of $y -x$.
  Also notice that $\mathcal I(\Xi)(\tau)$ is a random variable taking
value in $\R^N$ because $\Xi$ is progressively measurable and $\tau$
is a stopping time. We recall that the  diction a.s.  must be
understood   with respect to the family of equivalent measures
$\Pr_\a$, $\a > 0$.

\noindent We will call, with some abuse of language, the controls
$\Xi$ belonging to $\K(\tau,0)$   {\em $\tau$--cycles}.
\end{Notation}

\medskip

\begin{Remark} For $x$, $y$ in $\T^N$, the family of controls
$\K(\tau,y-x)$ is nonempty whenever $\essinf \tau> 0$. In fact for
such a stopping time select $\eps >0$ with $\eps < \essinf \tau$ and
define a control $\Xi$ setting for any $\om$
\[\Xi(\om)(s)= \left \{ \begin{array}{ll}
                         z_0 & \hbox{for $s \in [0,\eps)$} \\
                          0 & \hbox{for $s \in [\eps, + \infty)$} \\
                        \end{array} \right . \]
where $z_0$ is any vector of $\R^N$ with $\proj( \eps \, z_0) = y-x$
($\proj$ is the projection of $\R^N$ onto $\T^N$). It is indeed
apparent that $\Xi$ belongs to $\K(\tau,y-x)$.

\end{Remark}

\medskip

\smallskip

Using Notation \ref{kappa}, we derive from Theorem \ref{maxi}:
\smallskip

\begin{Corollary}\label{cormaxi}  For any pair of points $x$, $y$ in $\R^N$,  subsolution $\u$ to
\eqref{HJa}, $\a \in \S$, bounded stopping time $\tau$  and $\Xi \in \K(\tau,y-x)$, we have
\begin{equation}\label{formusub}
   \E_\a \big [u_{\om(0)}(x) - u_{\om(\tau)}(y) \big ]
   \leq  \E_\a \left [
\int_0^\tau L_{\om(s)}(x + \mathcal I(\Xi)(s),-\Xi(s)) + \al  \, ds
\right]. \end{equation}
\end{Corollary}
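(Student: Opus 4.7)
The plan is essentially a one-line reduction to Theorem \ref{maxi}. By definition of the class $\K(\tau, y-x)$ in Notation \ref{kappa}, any admissible control $\Xi \in \K(\tau, y-x)$ satisfies $\mathcal I(\Xi)(\tau) = y - x$ almost surely with respect to $\Pr_\a$ (for every $\a > 0$, and hence for every $\a \in \S$, since null sets are common to the entire simplex as noted in Section \ref{family}).

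First I would apply Theorem \ref{maxi} directly to the given data $\u$, $\tau$, $\Xi$, $\a$, $x$. This yields
\[
\E_\a \big[u_{\om(0)}(x) - u_{\om(\tau)}(x + \mathcal I(\Xi)(\tau))\big] \leq \E_\a\left[\int_0^\tau L_{\om(s)}(x + \mathcal I(\Xi)(s), -\Xi(s)) + \al \, ds\right].
\]
Second, I would substitute the a.s. identity $\mathcal I(\Xi)(\tau) = y - x$ into the left-hand side. Under the convention of Notation \ref{notproje}, the sum $x + (y - x)$ reduces to $y$ in $\T^N$, so $u_{\om(\tau)}(x + \mathcal I(\Xi)(\tau)) = u_{\om(\tau)}(y)$ almost surely. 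Since the expectation is unchanged by modification on a $\Pr_\a$--null set, the left-hand side coincides with $\E_\a[u_{\om(0)}(x) - u_{\om(\tau)}(y)]$, which is the desired inequality \eqref{formusub}.

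There is no real obstacle here: the entire work has already been done in Theorem \ref{maxi}, and the Corollary is simply a repackaging that exploits the constraint built into the set $\K(\tau, y-x)$. The only subtlety worth flagging explicitly is that the a.s. equality holds with respect to the full equivalent family $\{\Pr_\a : \a > 0\}$, so the substitution is legitimate for every $\a \in \S$ in the statement; this is guaranteed precisely by the definition of ``a.s.'' adopted in Terminology \ref{termini}.
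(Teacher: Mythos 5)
Your proposal is correct and is exactly how the paper treats this statement: the Corollary is derived immediately from Theorem \ref{maxi} by using the a.s.\ constraint $\mathcal I(\Xi)(\tau)=y-x$ built into $\K(\tau,y-x)$, with the null-set point handled by the equivalence of the measures $\Pr_\a$, $\a>0$ (and linearity $\Pr_\a=\sum_i a_i\Pr_i$ for boundary $\a\in\S$). No further comment is needed.
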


\medskip

In the next section we will show, see Theorem \ref{susuvai}, that
\eqref{max00} actually characterizes subsolutions to \eqref{HJa}.

\smallskip

\bigskip

\section{A representation formula for subsolutions}\label{subsol}
Throughout the section  we consider a constant $\al$ greater than or
equal to $\gamma$.
For  $y$ in $\R^N$,   $\b \in \R^M$, we define
 \begin{equation}\label{formula}
    v_i(x)=   \inf \, \mathbb E_i
 \left [ \int_0^\tau L_{\om}( (x +\mathcal I(\Xi),-\Xi) + \al \, ds +
 b_{\om(\tau)} \right ]
\end{equation}
for any $i \in \unoM$, $x \in \R^N$, where the infimum is taken with
respect to any bounded stopping times $\tau$ and $ \Xi
\in\K(\tau,y-x)$.  We have

\medskip

\begin{Proposition}\label{limita} The function $\v=(v_1, \cdots, v_M)$ defined in \eqref{formula} is
bounded in $\T^N$.
\end{Proposition}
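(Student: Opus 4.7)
The plan is to establish boundedness in two steps: an upper bound by producing explicit admissible controls, and a lower bound by invoking Corollary \ref{cormaxi} against a fixed subsolution of \eqref{HJa}.

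For the upper bound, I would fix the stopping time to a deterministic constant, say $\tau\equiv 1$, and for each $x\in\T^N$ pick a representative of $y-x$ in $\R^N$ lying in the standard fundamental domain, so $|y-x|\le \sqrt{N}/2$. Setting the constant control $\Xi(s)\equiv y-x$ on $[0,1)$ (and, say, zero afterwards) makes $\Xi\in\K(1,y-x)$ in the sense of Notation \ref{kappa}. Plugging this into the definition of $v_i(x)$ bounds it by
\[
\E_i\!\left[\int_0^1 L_{\om(s)}\bigl(x+s(y-x),\,-(y-x)\bigr)+\alpha\,ds+b_{\om(1)}\right].
\]
The integrand is bounded uniformly in $x$ and $\om$ because $L_j$ is continuous on the compact set $\T^N\times\overline{B_{\sqrt{N}/2}(0)}$ for each $j\in\unoM$, and $b_{\om(1)}$ takes only finitely many values. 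This gives a constant $C_1$ with $v_i(x)\le C_1$ for every $i$ and $x$.

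For the lower bound, since $\alpha\ge\gamma$, there exists a subsolution $\u$ to \eqref{HJa}, and by Proposition \ref{preli} $\u$ is continuous, hence bounded on $\T^N$. Corollary \ref{cormaxi} applied with $\a=\ei$ yields, for any bounded stopping time $\tau$ and any $\Xi\in\K(\tau,y-x)$,
\[
u_i(x)-\E_i\bigl[u_{\om(\tau)}(y)\bigr]\;\le\;\E_i\!\left[\int_0^\tau L_{\om(s)}(x+\mathcal I(\Xi)(s),-\Xi(s))+\alpha\,ds\right].
\]
Adding $\E_i[b_{\om(\tau)}]$ to both sides and rearranging gives
\[
u_i(x)+\E_i\bigl[b_{\om(\tau)}-u_{\om(\tau)}(y)\bigr]\;\le\;\E_i\!\left[\int_0^\tau L_{\om(s)}(x+\mathcal I(\Xi)(s),-\Xi(s))+\alpha\,ds+b_{\om(\tau)}\right].
\]
The random variable $b_{\om(\tau)}-u_{\om(\tau)}(y)$ is bounded by $\max_j|b_j|+\|\u\|_\infty$, which is independent of $\tau$, $\Xi$, and $x$. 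Taking the infimum over $(\tau,\Xi)$ on the right and using boundedness of $\u$ on the left produces a uniform lower bound $v_i(x)\ge -C_2$.

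No step looks genuinely hard: the upper bound is constructive, and the lower bound reduces to the subsolution estimate already proved in the previous section. The only point requiring a little care is that the upper-bound control $\Xi$ actually belongs to $\K(\tau,y-x)$ in the sense of Notation \ref{kappa} and that the choice of representative of $y-x$ can be made measurably in $x$ (a deterministic constant control for each fixed $x$ is trivially nonanticipating and locally bounded, so this is immediate).
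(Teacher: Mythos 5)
Your proof is correct and follows essentially the same route as the paper: a deterministic stopping time with a constant control gives the upper bound via local boundedness of $\L$, and Corollary \ref{cormaxi} applied to a subsolution of \eqref{HJa} gives the lower bound. The only (cosmetic) difference is in the lower bound, where the paper shifts the subsolution along $\uno\in\ker(\La)$ so that $\u(y)\leq\b$ and concludes $\v\geq\u$ directly, while you instead absorb the term $b_{\om(\tau)}-u_{\om(\tau)}(y)$ into a uniform constant; both are valid.
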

\begin{proof} Taking into account that $\uno \in \ker(\La)$, we see
that if $b_0 \in F_\al(y)$ (see \eqref{defF} for the definition of
$F_\al$) then $b_0 + \mu \, \uno \in  F_\al(y)$ as well,  for any
$\mu \in \R$. We can  consequently find a subsolution  $\u$ to
\eqref{HJa} with
\[ \u(y) \leq \b.\]
Owing to Corollary \ref{cormaxi}, we then have
\begin{multline*}
\mathbb E_i \left [
\int_0^\tau L_{\om(s)}(x + \mathcal I(\Xi)(s),-\Xi(s)) + \al \, ds +
b_{\om(\tau)} \right ] \\
\geq \mathbb E_i \left [
\int_0^\tau L_{\om(s)}(x + \mathcal I(\Xi)(s),-\Xi(s)) + \al \, ds +
 u_{\om(\tau)}(y)\right ] \geq u_i(x).
\end{multline*}
for any $i \in \unoM$, $x \in \R^N$,  bounded stopping time $\tau$
and $ \Xi \in\K(\tau,y-x)$. This implies
\[ \v(x) \geq \u(x) \qquad\hbox{for any $x$},\]
where $\geq$ must be understood componentwise. On the other side, by
setting $\tau \equiv |x-y|$, $\Xi=\frac{x-y}{|x-y|}$ and taking into
account that $\L$ is locally bounded, we see that $\v$ is also
bounded from above.
\end{proof}

\medskip

We aim at showing:

\begin{Theorem}\label{mainsub} The function $\v$ defined by  \eqref{formula}
is subsolution  to \eqref{HJa}.
 \end{Theorem}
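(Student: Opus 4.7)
The plan is to view $\v$ as the value function of the random control problem \eqref{formula} and follow the classical dynamic--programming route, specialized to the test inputs that force the subsolution inequality. Preliminarily I would sharpen Proposition \ref{limita} to Lipschitz continuity of $\v$ on $\T^N$: for $x_1, x_2 \in \T^N$, prepend to a near--optimal strategy for $v_i(x_2)$ a deterministic straight--line bridge given by the constant control $(x_2 - x_1)/|x_2 - x_1|$ on the interval $[0, |x_2 - x_1|]$, and use local boundedness of $\L$ to get $|v_i(x_1) - v_i(x_2)| \le C|x_1 - x_2|$ uniformly in $i$.

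The key auxiliary step is a restricted dynamic programming inequality of the form
\[
v_i(x) \;\le\; \E_i\!\left[\int_0^h \bigl(L_{\om(s)}(x + qs, -q) + \al\bigr)\,ds\right] + \sum_{j=1}^M (e^{-h\La})_{ij}\, v_j(x + qh),
\]
valid for every $i \in \unoM$, $x \in \T^N$, $q \in \R^N$ and $h > 0$. To prove it, fix $\varepsilon > 0$ and, for each $j \in \unoM$, pick a pair $(\tau^j, \Xi^j)$ with $\Xi^j \in \K(\tau^j, y - (x + qh))$ that is $\varepsilon$--optimal for $v_j(x+qh)$. Concatenate: put $\tilde\Xi(s) = q$ on $[0, h)$ and, on $[h, h + \tau^{\om(h)}(\phi_h \om))$, $\tilde\Xi(h + s, \om) = \Xi^{\om(h)}(s, \phi_h \om)$, with $\tilde\tau = h + \tau^{\om(h)}(\phi_h \om)$; nonanticipation is automatic because the branching variable $\om(h) = (\phi_h\om)(0)$ is $\F_h$--measurable and only $M$ branches occur, and one checks that $\mathcal I(\tilde\Xi)(\tilde\tau) = qh + (y - (x + qh)) = y - x$ a.s., so $\tilde\Xi \in \K(\tilde\tau, y - x)$. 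Splitting the cost of $\tilde\Xi$ at $h$, the residual piece equals $\E_i[G(\om(h), \phi_h\om)]$ where $G(j, \om')$ is the cost of $(\tau^j, \Xi^j)$ on the path $\om'$ starting from $(j, x+qh)$. Since $\om(h) = (\phi_h\om)(0)$, we may rewrite $G(\om(h),\phi_h\om) = \tilde G(\phi_h\om)$ with $\tilde G(\om') = G(\om'(0), \om')$, and Proposition \ref{transferflow} (Markov identity at the deterministic time $h$) gives
\[
\E_i[\tilde G(\phi_h\om)] \;=\; \sum_j (e^{-h\La})_{ij}\,\E_j[\tilde G] \;\le\; \sum_j (e^{-h\La})_{ij}\,\bigl(v_j(x+qh) + \varepsilon\bigr),
\]
using $\om(0) = j$ $\Pr_j$--a.s.\ for the last inequality. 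Sending $\varepsilon \to 0$ yields the DPP.

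With the DPP in hand, the subsolution property is a routine scalar--type computation. Let $\psi \in C^1(\T^N)$ be such that $v_i - \psi$ attains a local maximum at $x_0$, apply the DPP at $x_0$ with an arbitrary $q \in \R^N$, and expand. From $(e^{-h\La})_{ij} = \delta_{ij} - h\La_{ij} + O(h^2)$, Lipschitzness of $\v$ (to replace $v_j(x_0+qh)$ by $v_j(x_0)$ in the $j \neq i$ terms, whose coefficients are $O(h)$), continuity of $L$ together with $\om(0) = i$ $\Pr_i$--a.s.\ (so that $\E_i\int_0^h L_{\om(s)}(x_0 + qs, -q)\,ds = h L_i(x_0, -q) + o(h)$), and the local maximum estimate $v_i(x_0 + qh) - v_i(x_0) \le h\,q \cdot D\psi(x_0) + o(h)$, one obtains after dividing by $h$ and letting $h \to 0$
\[
\La^i \cdot \v(x_0) - q \cdot D\psi(x_0) - L_i(x_0, -q) \;\le\; \al \qquad \text{for every } q \in \R^N.
\]
Substituting $q' = -q$ and taking the supremum, the Legendre--Fenchel formula identifies the left side with $H_i(x_0, D\psi(x_0)) + \La^i \cdot \v(x_0)$, giving the viscosity subsolution condition.

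I expect the main obstacle to lie in the DPP step: verifying that $\tilde\Xi$ genuinely belongs to $\K$ (nonanticipation and local time--boundedness), arranging a measurable $\varepsilon$--selection of the near--minimizers $(\tau^j, \Xi^j)$, and invoking Proposition \ref{transferflow} with Fubini to produce cleanly the displayed decomposition of the residual cost. Everything downstream is a standard perturbation argument, driven by the first--order expansion of $e^{-h\La}$ and the Legendre--Fenchel duality between $L_i$ and $H_i$.
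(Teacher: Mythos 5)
Your proposal is correct and essentially reproduces the paper's argument: the same dynamic programming inequality \eqref{subbo1} (which the paper proves for arbitrary deterministic paths $\xi_0$, constant $q$ being all that is needed for the conclusion), obtained by the same shift--flow concatenation together with $\phi_h\#\Pr_\a=\Pr_{\a\, e^{-h\La}}$, followed by the same Lipschitz estimate and first--order expansion in $h$. The only minor deviations are in the last step, where the paper works at a differentiability point of $\v$ and invokes the a.e./viscosity equivalence of Remark \ref{solu} rather than testing with $\psi\in C^1$, and in your flagged worry about a measurable $\eps$--selection, which is moot since only the finitely many indices $j\in\unoM$ require near--optimal pairs, exactly as in the paper.
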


 \medskip

We postpone the proof after some preliminary material. The  crucial
point is to prove  a Dynamical Programming Principle type result.
We will use the flow $\phi_h$ defined \eqref{flow} in Appendix and the change of variable formula
\eqref{change}.

\medskip

\begin{Proposition} Let  $h$, $x$, $\xi_0$, $j$  be a positive time, a point in $\R^N$, a path in $\DiR$,
 and an index in $\unoM$ respectively.
 Then
 \begin{equation}\label{subbo1}
   v_j(x) \leq \E_j \left [ \int_0^h  L_{\om}(x+ \mathcal I(\xi_0),-\xi_0) +
   \al \, ds + v_{\om(h)}(x + \mathcal I(\xi_0)(h))  \right].
\end{equation}
\end{Proposition}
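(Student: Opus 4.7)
The plan is to prove \eqref{subbo1} by following the standard dynamic programming template: use the deterministic control $\xi_0$ on $[0,h]$, switch at time $h$ to an $\eps$-optimal control coming from the definition of $v_{\om(h)}$ at the new base point $x':=x+\mathcal I(\xi_0)(h)$, and then unfold the estimate via the shift flow $\phi_h$ together with the change-of-variable formula \eqref{change}.

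Fix $\eps>0$. For each of the finitely many indices $i\in\unoM$, by the definition of $v_i(x')$ I select a bounded stopping time $\tau^i$ and a control $\Xi^i\in\K(\tau^i,y-x')$ with
\[
\E_i\Bigl[\int_0^{\tau^i} L_{\om(s)}\bigl(x'+\mathcal I(\Xi^i)(s),-\Xi^i(s)\bigr)+\al\,ds+b_{\om(\tau^i)}\Bigr]\leq v_i(x')+\eps.
\]
Since $i$ ranges over a finite set, no measurable selection is required. I then concatenate by setting
\[
\tilde\tau(\om):=h+\tau^{\om(h)}(\phi_h\om),\qquad
\tilde\Xi(\om)(s):=\begin{cases}\xi_0(s)&\text{if }s\in[0,h),\\ \Xi^{\om(h)}(\phi_h\om)(s-h)&\text{if }s\geq h.\end{cases}
\]
Because $\om(h)$ is $\F_h$-measurable and the shift $\phi_h$ intertwines the filtrations in the natural way, $\tilde\Xi$ is nonanticipating and $\tilde\tau$ is a bounded stopping time; splitting the integral defining $\mathcal I(\tilde\Xi)(\tilde\tau)$ at $s=h$ and using the terminal condition on each $\Xi^i$ gives $\mathcal I(\tilde\Xi)(\tilde\tau)=(x'-x)+(y-x')=y-x$ almost surely, so $(\tilde\tau,\tilde\Xi)$ is admissible in the infimum defining $v_j(x)$.

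Plugging this pair into that infimum and splitting at $s=h$, the first piece is exactly the deterministic integral on the right-hand side of \eqref{subbo1}, while the second piece (after the substitution $s\mapsto s-h$), together with the terminal cost, equals $g_{\om(h)}(\phi_h\om)$, where
\[
g_i(\om):=\int_0^{\tau^i(\om)} L_{\om(s)}\bigl(x'+\mathcal I(\Xi^i)(s),-\Xi^i(s)\bigr)+\al\,ds+b_{\om(\tau^i)}.
\]
Since $\{\om(h)=i\}=\phi_h^{-1}(\D_i)$, each $\Pr_k$ is concentrated on $\D_k$, and $\phi_h\#\Pr_j=\Pr_{\mathbf e_j e^{-h\La}}$ by Proposition \ref{transferflow}, I would compute
\[
\E_j\bigl[g_{\om(h)}\circ\phi_h\bigr]=\sum_{i=1}^M\E_j\bigl[(g_i\cdot\I(\D_i))\circ\phi_h\bigr]=\sum_{i=1}^M(\mathbf e_j e^{-h\La})_i\,\E_i[g_i].
\]
By the near-optimality of $(\tau^i,\Xi^i)$ and formula \eqref{newprobabb} applied to the vector with entries $v_i(x')$, the last sum is bounded above by $\E_j[v_{\om(h)}(x')]+\eps$. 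Reassembling and letting $\eps\to 0$ yields \eqref{subbo1}.

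The main obstacle is the bookkeeping for the concatenated objects $\tilde\Xi$ and $\tilde\tau$: one must verify non-anticipation, the stopping time property, and the terminal constraint $\mathcal I(\tilde\Xi)(\tilde\tau)=y-x$ almost surely. Because $\om(h)$ takes only the $M$ values in $\unoM$, this reduces to a finite case analysis on the disjoint events $\{\om(h)=i\}$ rather than requiring any abstract measurable selection; once past this step, the remaining computation is essentially a single application of the change-of-variable identity \eqref{change} combined with the support structure of the $\Pr_i$.
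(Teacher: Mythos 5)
Your argument is correct and is essentially the paper's own proof: selecting $\eps$-optimal pairs $(\tau^i,\Xi^i)$ at the shifted base point $x'=x+\mathcal I(\xi_0)(h)$, concatenating through the shift flow $\phi_h$, and concluding via the change-of-variable formula \eqref{change}, the support of the $\Pr_i$ on $\D_i$, and \eqref{newprobabb}. Indexing the selection by $\om(h)$ is the same as the paper's device of gluing $\tau=\tau^i$, $\Xi=\Xi_i$ on $\D_i$ and then composing with $\phi_h$, since $(\phi_h\om)(0)=\om(h)$; the admissibility bookkeeping you flag (non-anticipation via Proposition \ref{supershift}-type arguments, measurability of the concatenation, and the a.s.\ terminal constraint via Proposition \ref{transferflow}) is exactly what the paper verifies in detail.
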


\begin{proof} Fix $\eps >0$ and set $\al=0$, $z= x + \mathcal I(\xi_0)(h)$  to ease notation.
Denote, for any $i$, by $\tau^i$, $\Xi_i$ bounded stopping times and
controls in $\K(\tau^i,y-z)$ with
\begin{equation}\label{subbo10}
    v_i(z) \geq  \E_i
 \left [ \int_0^{\tau^i} L_{\om}(z +\mathcal I(\Xi_i),-\Xi_i)  \, ds +
 b_{\om(\tau^i)} \right] - \eps
\end{equation}
 We define  new  stopping times and controls   via
 \[\tau= \tau^i, \quad \Xi=\Xi_i \qquad\hbox{in $\D_i$ for any $i$},\]
 it is clear that $\Xi \in \K(\tau, y-z)$.
We   set
 \[\widetilde \tau (\om)= \tau(\phi_h(\om)) + h \qquad\hbox{for any $\om
 \in \D$,}\]
this is yet a stopping time, since for any $t \geq h$
\[\{\om \mid \widetilde \tau (\om) \leq t\} = \{\om \mid  \tau(\phi_h(\om)) \leq t -h\}
= \phi_h^{-1} \big ( \{\om \mid \tau(\om) \leq t -h\} \big ),\]
which actually yields by Proposition \ref{supershift}
\[ \{\om \mid \widetilde \tau (\om) \leq t\} \in \F_t,\]
as desired. We further set
 \[\widetilde \Xi(\om)(s) = \left \{
\begin{array}{ll}
    \xi_0(s), & \hbox{for $\om \in \D$, $s \in [0,h)$} \\
    \Xi(\phi_h(\om))(s-h), & \hbox{for $\om \in \D$, $s \in [h,+\infty)$.} \\
\end{array}
\right.\]
To justify $\widetilde \Xi$ being an admissible control,
we define a map $\Psi$ from $\DiR$ to itself through
\[ \Psi(\xi)(s)= \left\{
\begin{array}{ll}
    \xi_0(s) & \hbox{for $s \in [0,h)$} \\
    \xi(s-h) & \hbox{for $s \in [h, + \infty)$}.
\end{array}
\right.\]
According to the very definition of convergence in $\DiR$,
this mapping  is continuous in the sense of Skorohod,  in fact if
$\xi_n \to \xi$ and $g_n$ is the corresponding time scale
deformation, then we define
\[\overline g_n(s)= \left \{ \begin{array}{ll}
                             s & \hbox{for $s \in [0,h)$}  \\
                             g_n(s-h)+h & \hbox{for $s \in [h, + \infty)$} \\
                           \end{array} \right . \]
and it is straightforward to check that $\overline g_n$ locally
uniformly converges to the identity function in $[0,+\infty)$ and
$\Psi(\xi)(\overline g_n(s))$ locally uniformly converges to
$\Psi(\xi)(s)$. We can rephrase the definition of $\widetilde\Xi$
above as
\[ \widetilde \Xi (\om)=  \Psi(\Xi (\phi_h(\om)),\]
which shows that $\Xi$ is a random variable as composition of
continuous and    measurable maps. If $\om_1=\om_2$ in $[0,t]$, for
some $t> h$, then
\[ \phi_h(\om_1) = \phi_h(\om_2) \qquad\hbox{in $[0,t-h]$}\]
which implies
\[\Xi(\phi_h(\om_1))=\Xi(\phi_h(\om_2)) \qquad\hbox{in $[0,t-h]$},\]
therefore
\begin{align*}
&\widetilde \Xi(\om_1)= \xi_0= \widetilde \Xi(\om_2)  \qquad\hbox{in $[0,h]$}, \\
&\widetilde \Xi(\om_1(s))= \Xi(\phi_h(\om_1))(s-h)= \Xi (\phi_h(\om_2))(s-h)=
  \widetilde \Xi(\om_2)(s) \qquad\hbox{in $[h,t]$},
\end{align*}
which shows that $\Xi$ is nonanticipating. Finally the
the uniformly boundedness condition is clearly fulfilled. We conclude that
$\widetilde \Xi$ is an admissible control.  To show that it  belongs
to $\K(\widetilde\tau,y-x)$, we consider for $\om \in \D$
\[
\int_0^{ \widetilde \tau(\om)} \widetilde \Xi(\om) \, ds
  = \int_0^h  \xi_0 \,
ds + \int_{h}^{ \widetilde \tau (\om)}  \Xi( \phi_h(\om))(s-h) \, ds
= z - x + \int_0^{  \tau (\phi_h(\om))}  \Xi(\phi_h(\om))(s) \, ds,
\]
Owing to $\Xi \in \K(\tau, y-z)$ and Proposition \ref{transferflow}
we have for any $\a
> 0$ in $\S$
\[ \Pr_\a  \left \{ \om \mid \int_0^{\tau (\phi_h(\om))}  \Xi(
\phi_h(\om))(s) \, ds \neq y-z \right \} = \Pr_{\a e^{-h\La}} \left
\{ \om \mid \int_0^{\tau (\om)}  \Xi( \om)(s) \, ds \neq y-z \right
\} = 0. \] This establishes  that $\widetilde \Xi \in \K(
\widetilde\tau, y-x)$. We compute for $s >0$:
\begin{eqnarray} \label{subbo3}
 x +  \mathcal I(\widetilde \Xi)(\om)(s+h) &=&  x + \int_0^h \xi_0 \, dr +
   \int_h^{s+h} \widetilde \Xi(\om) \, dr \\
 \nonumber  &=& z + \int_0^s \Xi(\phi_h(\om)) \, dr = z + \mathcal
   I(\phi_h(\om))(s).
\end{eqnarray}
According to the very definition of $\v$, we then have
\begin{eqnarray}
 \label{subbo2} \qquad \qquad v_j(x) &\leq&
 \E_j
 \left [ \int_0^{\widetilde\tau } L_{\om}(x +\mathcal I(\widetilde \Xi),-\widetilde \Xi)  \, ds +
 b_{\om(\widetilde \tau)} \right ] \\ \nonumber
   &=& \E_j
 \left [  \int_0^h  L_{\om}(x+ \mathcal I(\xi_0),-\xi_0)
    \, ds + \int_h^{\widetilde\tau } L_{\om}(x +  \mathcal I(\widetilde \Xi),-\widetilde \Xi)  \, ds +
 b_{\om(\widetilde\tau)} \right ].
\end{eqnarray}

Using the definitions of $\widetilde \tau$, $\widetilde \Xi$, the
change of variable formula \eqref{change} and \eqref{subbo3}, we
have
\begin{align*}
& \E_j \left [ \int_h^{\widetilde\tau } L_{\om}(x +
\mathcal I(\widetilde \Xi),- \widetilde  \Xi)  \, ds + b_{\om(\widetilde\tau(\om))} \right ] \\
=&
\E_j \left [ \int_0^{\widetilde\tau - h} L_{\om(s+h)}(x +\mathcal
I(\widetilde \Xi)(\om)(s+h),-\widetilde \Xi(\om)(s+h))  \, ds +
b_{\om(\widetilde\tau(\om))} \right ]\\
 =&
\E_j \left [
\int_0^{\tau(\phi_h(\om))} L_{\phi_h(\om)}(z +\mathcal
I(\Xi)(\phi_h(\om)(s),-\Xi(\phi_h(\om)(s))  \, ds +
b_{\phi_h(\om)(\tau(\phi_h(\om)))\emph{}} \right ]\\
 = &
\E_{\mathbf e_j \, e^{-h \La}} \left [
\int_0^{\tau(\om)} L_{\om}(z +\mathcal I(\Xi)(\om)(s),-\Xi(\om)(s))
\, ds +  b_{\om(\tau)}  \right ]
\end{align*}
Using \eqref{subbo10}, we further get
\begin{align*}
&  \E_{\mathbf e_j \, e^{-h \La}} \left [ \int_0^{\tau(\om)}
L_{\om}(z +\mathcal I(\Xi)(\om)(s),-\Xi(\om)(s)) \, ds  +
 b_{\om(\tau)}\right ] \\
= &
\sum_i \big ( \mathbf e_j e^{-h\La} \cdot \ei \big )
\E_i \left [  \int_0^{\tau ^i} L_{\om}(z +\mathcal
I(\Xi_i)(s),-\Xi_i(s)) \,
ds + b_{\om(\tau^i)}\right ]   \\
 \leq&
 \sum_i \big ( \mathbf e_j \, e^{-h\La} \cdot \ei \big ) \, (v_i(z)+\eps)
  = \mathbf e_j \, e^{-h\La} \cdot \v(z) +
 \eps =  \E_j v_{\om(h)}(z) + \eps.
\end{align*}

Combining the last two computations we get
\[   \E_j \left [ \int_h^{\widetilde\tau } L_{\om}(x +
\mathcal I(\widetilde \Xi),- \widetilde  \Xi)  \, ds +
b_{\om(\widetilde\tau(\om))} \right ]                \leq     \E_j
v_{\om(h)}(z) + \eps \] and recalling \eqref{subbo2} and the
definition of $z$ we finally obtain
\[v_j(x) \leq \E_j \left [ \int_0^h  L_{\om}(x+ \mathcal I(\xi_0),-\xi_0)
 \, ds + v_{\om(h)}(x + \mathcal I(\xi_0)(h))  \right] + \eps.\]
Taking into account that $\eps$ is arbitrary and that we have set
$\al=0$, we obtain in the end the assertion.
\end{proof}

\bigskip

\begin{Lemma}\label{subbolip} The function $\v$ defined by
\eqref{formula} is
 Lipschitz--continuous in $\T^N$.
\end{Lemma}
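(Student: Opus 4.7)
The plan is to apply the dynamic programming inequality \eqref{subbo1} with a short, deterministic, straight-line control that carries $x_1$ to $x_2$, and then bound the resulting right-hand side linearly in the torus distance $|x_1-x_2|$.

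Fix $x_1,x_2\in\T^N$, select representatives in $\R^N$ realizing the torus distance, and set $h:=|x_1-x_2|$. Define $\xi_0\in\DiR$ to be the constant unit vector $(x_2-x_1)/h$ on $[0,h)$, extended by $0$ on $[h,+\infty)$. Then $x_1+\mathcal I(\xi_0)(h)=x_2$ in $\T^N$, so \eqref{subbo1} yields, for each $j\in\unoM$,
\[
v_j(x_1)\le \E_j\!\left[\int_0^h L_\om(x_1+\mathcal I(\xi_0),-\xi_0)+\al\,ds\right]+\E_j\,v_{\om(h)}(x_2).
\]
Since each $L_i$ is continuous on the compact set $\T^N\times\{|q|\le 1\}$, the integrand is bounded by a constant $M$ for every $\om$, whence the first expectation is at most $(M+|\al|)\,h$. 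For the second term, by \eqref{newprobabb} applied to $\b=\v(x_2)$,
\[
\E_j\,v_{\om(h)}(x_2)-v_j(x_2)=\mathbf e_j\,(e^{-h\La}-I)\cdot\v(x_2),
\]
and this is bounded by $\|e^{-h\La}-I\|\cdot\|\v\|_\infty\le C\,h$, using the elementary estimate $\|e^{-h\La}-I\|\le h\,\|\La\|\,e^{h\|\La\|}$ together with the boundedness of $\v$ supplied by Proposition \ref{limita}.

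Combining these bounds gives $v_j(x_1)-v_j(x_2)\le C'\,|x_1-x_2|$ with $C'$ uniform in the chosen pair; exchanging the roles of $x_1$ and $x_2$ yields the reverse inequality, so each component $v_j$, and hence $\v$, is Lipschitz on $\T^N$. The argument is essentially a packaged application of \eqref{subbo1} plus Proposition \ref{limita}; no significant obstacle is anticipated, beyond the mild care needed to pick $\R^N$-representatives of $x_1,x_2$ realizing the torus distance so that the deterministic unit-velocity control lands exactly at $x_2$ at time $h$.
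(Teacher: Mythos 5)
Your proof is correct and follows essentially the same route as the paper: apply the dynamic programming inequality \eqref{subbo1} with the deterministic unit-speed straight-line control over time $h=|x_1-x_2|$, bound the Lagrangian term by a constant times $h$ via boundedness of $\L$ on $\T^N\times B(0,1)$, and absorb the discrepancy $\E_j v_{\om(h)}(x_2)-v_j(x_2)$ using the boundedness of $\v$ (Proposition \ref{limita}) together with the Lipschitz behaviour of $t\mapsto e^{-t\La}$. The only cosmetic difference is that you invoke the elementary bound on $\|e^{-h\La}-I\|$ where the paper cites Proposition \ref{liplip}; both yield the same uniform constant.
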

\begin{proof}
 We consider two points $z \neq x$, and set
$\tau_0 \equiv |z-x|$, $\Xi_0=\frac{z-x}{|z-x|}=:q$. Then, according
to \eqref{subbo1}
\[v_i(x) - \ei \, e^{-|x-z|\La} \cdot \v(z) \leq \E_i
 \left [ \int_0^{|x-z|} L_{\om(s)}  ( x + s \,q, -q )  + \al  \,
 ds \right ]\]
 from which we derive
 \begin{equation}\label{subbolip1}
    v_i(x)-v_i(z) +   \ei \, \left ( I - e^{-|x-z|\La}  \right )  \cdot  \v(z)
 \leq \E_i \left [ \int_0^{|x-z|} L_{\om(s)}  ( x + s \,q, -q )  + \al  \,
 ds \right ]
\end{equation}
 We take  a constant $R$ which is at the same time   upper bound of both $\L(x,q)$ in $\T^N \times
 B(0,1)$ and $|\v(x)|$ in $\T^N$, see Lemma \ref{limita}, and in addition Lipschitz constant of  \[t
 \mapsto \ei \,   e^{-t\La}    \quad\hbox{in $[0,+ \infty)$}\]
 for any  $i$, see Proposition \ref{liplip}. We deduce from \eqref{subbolip1}
 \[v_i(x)-v_i(z) \leq ( R + \al + R^2) \, |x-z|.\]
 This completes the proof.
\end{proof}

\medskip

\begin{proof} [{\bf Proof of Theorem \ref{mainsub}}]
We consider   a point  $x \in \R^N$ where all components of
$\v(x)$ are differentiable,  and fix  a nonvanishing  vector $q \in
\R^N$, further we take  $\xi_0 \equiv q$, and accordingly
\[x +\mathcal I(\xi_0)(s)= x + s \, q \qquad\hbox{for any $s \geq 0$.}\]
Formula \eqref{subbo1} then  reads
\[
v_j(x) -  \mathbf e_j  \, e^{-h\La} \cdot  \v(x + h \, q) \leq
 \int_0^h  \mathbf e_j \,  e^{-s\La} \cdot \L(x+ s \, q,-q) + \al
  \, ds,
\]
which implies
\[ \frac {v_j(x) - \mathbf e_j  \, e^{-h\La} \cdot  \v(x + h \, q)}h \leq
\frac 1h \,  \int_0^h  \mathbf e_j \,  e^{-s\La} \cdot \L(x+ s \,
q,-q) + \al \, ds.\]
  Passing to the limit as $h$ goes to $0$, and taking into account that all the $v_j$
   are differentiable at $x$, we get
\[ \La^j \cdot \v(x) - D v_j(x) \cdot q \leq L_j(x,-q) + \al.\]
Being $q$ arbitrary, we further obtain
\[\La^j \cdot  \v(x) + H_j(x,Dv_j(x)) = \La^j \, \v(x)  + \sup_q \{-  D v_j(x) \cdot q -  L_j(x,-
q)\} \leq \al.\] This shows that $\v(x)$ is a.e.  and so viscosity
subsolution of the system \eqref{HJa}.
\end{proof}

\begin{Theorem}\label{suvai} For $y \in \T^N$,   $\b \in F_\al(y)$ if and only if
\begin{equation}\label{suvai1}
  \mathbb E_i \left  [\int_0^\tau L_{\om(s)}(y + \mathcal I(\Xi)(s),-\Xi(s)) + \al \, ds - b_i
+ b_{\om(\tau)} \right ] \geq 0
\end{equation}
for any $i \in \unoM$, bounded  stopping times $\tau$ and $\tau$--cycles $\Xi$.
\end{Theorem}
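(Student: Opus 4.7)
The proof splits into a direct implication from Corollary~\ref{cormaxi} and a converse that realizes $\b$ as the value at $y$ of the candidate subsolution $\v$ built in Theorem~\ref{mainsub}. I would set $x=y$ throughout.

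\emph{Necessity.} Assume $\b \in F_\al(y)$, i.e.\ there exists a subsolution $\u$ to \eqref{HJa} with $\u(y)=\b$. For any bounded stopping time $\tau$, any $\tau$--cycle $\Xi \in \K(\tau,0)$, and any $i \in \{1,\dots,M\}$, I would apply Corollary~\ref{cormaxi} with $\a=\ei$, observing two key facts: first, $\om(0)=i$ holds $\Pr_i$--a.s.\ (this is built into the construction of $\Pr_i$ via \eqref{newprobaa}), so $u_{\om(0)}(y) = b_i$; second, since $\Xi$ is a $\tau$--cycle, $y + \mathcal I(\Xi)(\tau) = y$ a.s., so $u_{\om(\tau)}(y+\mathcal I(\Xi)(\tau)) = u_{\om(\tau)}(y) = b_{\om(\tau)}$. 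Rearranging \eqref{formusub} then yields exactly \eqref{suvai1}.

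\emph{Sufficiency.} Assume \eqref{suvai1} holds for every $i$, every bounded stopping time $\tau$, and every $\tau$--cycle $\Xi$. Using the very vector $\b$ as the boundary datum at the very point $y$, define $\v = (v_1,\dots,v_M)$ by formula \eqref{formula}. By Theorem~\ref{mainsub}, $\v$ is a subsolution to \eqref{HJa}. It then suffices to prove $\v(y) = \b$, as this gives $\b \in F_\al(y)$ by the definition \eqref{defF}.

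\emph{Computing $\v(y)$.} For the upper bound $v_i(y) \leq b_i$, I would test the infimum in \eqref{formula} with $\tau \equiv 0$: any control $\Xi$ trivially lies in $\K(0, y-y) = \K(0,0)$ since $\mathcal I(\Xi)(0)=0$, the integral vanishes, and $\E_i[b_{\om(0)}] = b_i$. For the matching lower bound, specialize \eqref{suvai1} to rewrite the hypothesis as
\[
\E_i\!\left[\int_0^\tau L_{\om(s)}(y + \mathcal I(\Xi)(s),-\Xi(s)) + \al\, ds + b_{\om(\tau)}\right] \geq b_i
\]
for every admissible pair $(\tau,\Xi)$ with $\Xi \in \K(\tau,0) = \K(\tau,y-y)$; taking the infimum over exactly these pairs gives $v_i(y)\geq b_i$.

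\emph{Expected difficulty.} There is no real analytic obstacle here: the heavy lifting is done by Theorem~\ref{maxi} (underlying Corollary~\ref{cormaxi}) and by Theorem~\ref{mainsub}. The only delicate point is the degenerate case $\tau \equiv 0$ in the definition of $v_i(y)$, which I would justify carefully by noting that $\K(0,0)$ is automatically nonempty and that the exponential prefactor $\ei\,e^{-0\cdot \La} = \ei$ correctly picks out the $i$--th coordinate of $\b$ under $\Pr_i$. Once that is in place, the two inequalities collapse to $\v(y)=\b$ and the conclusion is immediate.
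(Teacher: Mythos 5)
Your proposal is correct and follows essentially the same route as the paper: necessity via Corollary \ref{cormaxi} applied with $\a=\ei$ and a $\tau$--cycle, and sufficiency by showing $\v(y)=\b$ for the function $\v$ of \eqref{formula} (upper bound from $\tau\equiv 0$, lower bound from \eqref{suvai1}) and invoking Theorem \ref{mainsub}. Your added remarks on $\om(0)=i$ holding $\Pr_i$--a.s.\ and on the degenerate case $\tau\equiv 0$ simply make explicit details the paper leaves implicit.
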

\begin{proof}  We denote as usual by $\v$ the function defined in \eqref{formula}. By taking the  stopping time $\tau \equiv 0$ and the control
$\Xi \equiv 0$, we see that
\[ \v(y) \leq \b,\]
where $\leq$ must be understood componentwise. If \eqref{suvai1}
holds then we also get the converse inequality so that $\v(y)=\b$,
which proves $\b \in F_\al(y)$ being $\v$ subsolution to
\eqref{HJa}.

Conversely,  if there is a subsolution $\u$ of \eqref{HJa} with $u(y)=\b$
then \eqref{suvai1} is a direct consequence of Corollary \ref{cormaxi}
\end{proof}

\medskip

We give a characterization of the Aubry set from the Lagrangian point of view.
\begin{Theorem}\label{vai} Assume the element $\b$ appearing in \eqref{formula} to be in  $ F_\al(y)$, then
\begin{itemize}
    \item [{\rm (i)}] $\v(y)= \b${\rm;}
    \item [{\rm (ii)}] $\v$ is the maximal subsolution to \eqref{HJa}
    taking the value $\b$ at $y${\rm;}
    \item [{\rm (iii)}] If $\al=\ga$ and $y \in \A$ then $\v$ is a
    critical solution.
\end{itemize}
\end{Theorem}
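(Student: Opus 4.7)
\emph{Proof plan.} All three assertions will follow rather directly from results already established; no new estimate is needed. The hardest ingredient, namely the fundamental inequality for subsolutions, has already been packaged into Corollary \ref{cormaxi}, and the characterization Theorem \ref{suvai} essentially already contains part (i). Assertion (iii) will be a one--line consequence of the definition of the Aubry set once (ii) is available.

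\emph{Step 1: Part (i).} Plug the trivial choices $\tau \equiv 0$ and $\Xi \equiv 0$ into the definition \eqref{formula} of $v_i(y)$: since $\mathcal I(0)(0) = 0 \in \T^N$, these are admissible, and the corresponding value of the cost is $b_i$. Hence $v_i(y) \leq b_i$ for every $i$. For the reverse inequality, the assumption $\b \in F_\al(y)$ means that there exists a subsolution $\u$ of \eqref{HJa} with $\u(y) = \b$, so Theorem \ref{suvai} applies and gives \eqref{suvai1}; rearranging that inequality yields precisely $v_i(y) \geq b_i$. Therefore $\v(y) = \b$.

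\emph{Step 2: Part (ii).} Let $\u$ be any subsolution to \eqref{HJa} with $\u(y) = \b$. Fix $x \in \T^N$, $i \in \unoM$, a bounded stopping time $\tau$ and a control $\Xi \in \K(\tau, y-x)$. Apply Corollary \ref{cormaxi} with the probability vector $\a = \ei$: using that $\om(0) = i$ $\Pr_i$-a.s.\ and $\u(y) = \b$, we obtain
\[
u_i(x) \leq \E_i\!\left[\int_0^\tau L_{\om(s)}(x+\mathcal I(\Xi)(s),-\Xi(s)) + \al\, ds + b_{\om(\tau)}\right].
\]
Taking the infimum over $\tau$ and $\Xi \in \K(\tau, y-x)$ gives $u_i(x) \leq v_i(x)$ for every $x$ and every $i$, which is maximality. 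Combined with the subsolution property of $\v$ proved in Theorem \ref{mainsub} and with part (i), this establishes (ii).

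\emph{Step 3: Part (iii).} Assume now $\al = \ga$ and $y \in \A$. By (ii), $\v$ is the maximal critical subsolution taking the value $\b$ at $y$. The very definition of the Aubry set then asserts that any such maximal critical subsolution is automatically a solution to {\rm(HJ$\ga$)}. Therefore $\v$ is a critical solution, as claimed.
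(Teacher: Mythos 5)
Your proof is correct and follows essentially the same route as the paper: part (i) via the trivial choice $\tau\equiv 0$, $\Xi\equiv 0$ together with Theorem \ref{suvai}, part (ii) by applying Corollary \ref{cormaxi} with $\a=\ei$ and $\u(y)=\b$ and taking the infimum, and part (iii) directly from the definition of the Aubry set. No gaps; your write-up is in fact slightly more careful than the paper's (which writes $u_i(y)$ where $u_i(x)$ is meant) in spelling out why $u_{\om(0)}(x)=u_i(x)$ holds $\Pr_i$-a.s.
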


\begin{proof}
Item (i) has been already proved in Theorem  \ref{suvai}.  If $\u$ is a subsolution to \eqref{HJa}
with $\u(y)= \b$, then   by Corollary \ref{cormaxi} we get
\[u_i(y) \leq \mathbb E_i \left [
\int_0^\tau L_{\om(s)}(x + \mathcal I(\Xi)(s),-\Xi(s)) + \al \, ds +
 b_{\om(\tau)} \right ]\]
 for any $i \in \unoM$, bounded stopping time $\tau$ and
 $\tau$--cycle $\Xi$. This shows
  \[ \v \geq \u.\]
 Item (iii)
directly comes from the definition of the Aubry set.
\end{proof}

\bigskip

We finish the section by showing that  for any $\al \geq \ga$
inequality \eqref{formusub} actually characterizes subsolutions to
\eqref{HJa}.

\smallskip

\begin{Theorem}\label{susuvai}  A function $\u:\T^n\to\R^M$
is a subsolution to
\eqref{HJa} if and only if  inequality \eqref{formusub}  holds true
for any pair of points $x$, $y$  in $\T^N$, $\a \in \S$, any bounded
stopping time $\tau$,   $\Xi \in \K(\tau, y-x)$.
\end{Theorem}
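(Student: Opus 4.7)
The forward direction is precisely Corollary \ref{cormaxi}, so the whole content lies in the converse: assuming $\u$ continuous and \eqref{formusub} holding for all admissible data, one must recover the viscosity subsolution property. Compared with Theorem \ref{maxi}, this direction is much easier because we get to choose the stopping time and control: we will take them both deterministic and constant, which bypasses all the Fubini/approximation machinery of Section \ref{estimate} and reduces everything to a first--order expansion.

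Concretely, the plan is to fix $i \in \unoM$, $x_0 \in \T^N$ and $\psi \in C^1(\T^N)$ such that $u_i - \psi$ attains a maximum at $x_0$, pick an arbitrary $q \in \R^N$, and apply \eqref{formusub} with $\a = \ei$, deterministic stopping time $\tau \equiv h > 0$, and constant control $\Xi \equiv q$, with $y = x_0 + h q$ so that $\Xi \in \K(h, y - x_0)$. Because $\om(0) = i$ holds $\Pr_i$--almost surely and by the push--forward formula \eqref{newprobabb}, the estimate becomes
\[
u_i(x_0) \;-\; \ei \, e^{-h\La} \cdot \u(x_0 + h q) \;\leq\; \int_0^h \ei \, e^{-s\La} \cdot \L(x_0 + s q, -q) \, ds \;+\; \al \, h.
\]
I would then rewrite the left hand side as
\[
\bigl(u_i(x_0) - u_i(x_0 + h q)\bigr) \;+\; \ei \bigl(I - e^{-h\La}\bigr) \cdot \u(x_0 + h q),
\]
bound the first summand from below using the test--function inequality $u_i(x) - u_i(x_0) \leq \psi(x) - \psi(x_0)$ at $x = x_0 + h q$ (which gives $-h D\psi(x_0) \cdot q + o(h)$), and use the matrix expansion $I - e^{-h\La} = h\La + o(h)$ together with the continuity of $\u$ to identify the second summand with $h \La^i \cdot \u(x_0) + o(h)$. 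The right hand side equals $h (L_i(x_0, -q) + \al) + o(h)$ by continuity of $\L$ at $s = 0$.

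Dividing by $h$ and letting $h \to 0^+$ yields
\[
-\,D\psi(x_0) \cdot q \;+\; \La^i \cdot \u(x_0) \;\leq\; L_i(x_0, -q) + \al,
\]
for every $q \in \R^N$. Taking the supremum over $q$ and applying the Legendre--Fenchel duality $H_i(x_0, D\psi(x_0)) = \sup_p\bigl(D\psi(x_0) \cdot p - L_i(x_0, p)\bigr)$ via the substitution $p = -q$ then delivers $H_i(x_0, D\psi(x_0)) + \La^i \cdot \u(x_0) \leq \al$, which is the required subsolution inequality. The only step that demands any care is the $o(h)$ bookkeeping, in particular verifying that the remainder $\ei(I - e^{-h\La}) \cdot \bigl[\u(x_0 + hq) - \u(x_0)\bigr]$ is genuinely $o(h)$; this is immediate from the continuity of $\u$ on the compact torus (and would also follow from the Lipschitz estimate one can derive from \eqref{formusub} itself by applying it with $\tau \equiv |y-x|$ and the unit control $\Xi \equiv (y-x)/|y-x|$, should $\u$ not be assumed continuous a priori).
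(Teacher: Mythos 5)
Your proposal is correct, but your converse takes a genuinely different route from the paper's. The paper (Proposition \ref{converse}) first extracts Lipschitz continuity of $\u$ by the argument of Lemma \ref{subbolip}, then fixes a differentiability point $y$ of $u_i$ and defines the value function $\v$ of \eqref{formula} with $\b=\u(y)$; hypothesis \eqref{formusub} (applied with $x=y$, resp.\ general $x$) gives $\u(y)\in F_\al(y)$ and $\u\leq \v$ with equality at $y$, so $u_i$ is subtangent to $v_i$ at $y$, hence $Du_i(y)\in\partial v_i(y)$, and the subsolution property of $\v$ (Theorem \ref{mainsub}) combined with the Clarke--gradient/a.e.\ characterization of Remark \ref{solu} yields the pointwise inequality; the argument thus leans on Theorems \ref{mainsub} and \ref{vai}. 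You bypass all of that machinery: testing \eqref{formusub} only with deterministic times $\tau\equiv h$ and constant controls $\Xi\equiv q$ (exactly the data the paper itself uses inside the proof of Theorem \ref{mainsub}, cf.\ \eqref{subbo1}; here Fubini and \eqref{newprobabb} are unproblematic because $\tau$ is deterministic), and expanding to first order in $h$ with $e^{-h\La}=I-h\La+o(h)$, you obtain $-D\psi(x_0)\cdot q+\La^i\cdot\u(x_0)\leq L_i(x_0,-q)+\al$ for every $q$ against an arbitrary $C^1$ test function, and Fenchel duality (using convexity {\bf(H2)}) recovers $H_i$. This is more elementary and self-contained --- it needs neither the representation formula nor the equivalence between a.e.\ and viscosity subsolutions --- and it shows the slightly sharper fact that \eqref{formusub} restricted to deterministic times and constant controls already characterizes subsolutions; what the paper's route buys is brevity given the results already established, and no small-$h$ bookkeeping. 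One minor caveat, common to both arguments: some a priori regularity of $\u$ is needed for the statement to make sense, and your parenthetical (continuity on the compact torus, or the Lipschitz bound extracted from \eqref{formusub} as in Lemma \ref{subbolip}, which also uses boundedness of $\u$) disposes of it in the same way the paper does.
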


\medskip

In view of Corollary \ref{cormaxi} , it is enough to show:

\smallskip

\begin{Proposition}\label{converse}  If  a function $\u:\T^N\to\R^M$ satisfies  inequality \eqref{formusub}   for
any pair of points $x$, $y$  in $\T^N$, $\a \in \S$, any bounded
stopping time $\tau$,   $\Xi \in \K(\tau, y-x)$, then $\u$ is a
subsolution to \eqref{HJa}.
\end{Proposition}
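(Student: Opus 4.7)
The plan is to deduce the pointwise subsolution inequality from the integral estimate \eqref{formusub} by specializing $\a$, $\tau$ and $\Xi$ to deterministic data and letting the time horizon shrink to zero, after first bootstrapping the hypothesis to yield Lipschitz continuity of $\u$.

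To obtain boundedness, I would fix a reference point $y_0\in\T^N$ and a time $T>0$, and apply \eqref{formusub} with $\a=\ei$, constant stopping time $\tau\equiv T$ and constant control $\Xi\equiv (y_0-x)/T\in\K(T,y_0-x)$ for arbitrary $x\in\T^N$ (choosing the shortest representative of $y_0-x$, which is uniformly bounded on the torus). The right-hand side is then uniformly controlled, and \eqref{formusub} reduces to
\[
u_i(x)\leq\sum_j(e^{-T\La})_{ij}\,u_j(y_0)+C,
\]
a uniform upper bound on every $u_i$. Exchanging the roles of $x$ and $y_0$, and using that by irreducibility of $\La$ the matrix $e^{-T\La}$ has strictly positive entries bounded away from zero, one can solve for an arbitrary component $u_j(x)$ in terms of the upper bounds on the remaining components, yielding the matching lower bound. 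Once $\u$ is bounded, taking $\tau\equiv|y-x|$ and $\Xi\equiv(y-x)/|y-x|$ for nearby $x,y$ and expanding $e^{-|y-x|\La}=I-|y-x|\,\La+O(|y-x|^2)$ in \eqref{formusub} converts the left-hand side into $u_i(x)-u_i(y)-|y-x|\,\La^i\cdot\u(y)+O(|y-x|^2)$, while the right-hand side is $O(|y-x|)$; symmetry then gives Lipschitz continuity of $\u$.

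With $\u$ Lipschitz, I would pick a point $x_0\in\T^N$ where every $u_j$ is differentiable (a full measure set by Rademacher's theorem) and apply \eqref{formusub} with $\a=\ei$, $\tau\equiv h>0$ and $\Xi\equiv q\in\R^N$, so that $\Xi\in\K(h,hq)$ with $y=x_0+hq$. Expanding $(e^{-h\La})_{ij}=\delta_{ij}-h\La_{ij}+O(h^2)$, the left-hand side becomes $u_i(x_0)-u_i(x_0+hq)+h\,\La^i\cdot\u(x_0+hq)+O(h^2)$, while the right-hand side equals $hL_i(x_0,-q)+\al h+o(h)$ by continuity of $\L$. Dividing by $h$ and letting $h\to 0^+$ yields
\[
-Du_i(x_0)\cdot q+\La^i\cdot\u(x_0)\leq L_i(x_0,-q)+\al.
\]
Taking the supremum over $q\in\R^N$ and reading off the Legendre--Fenchel identity $H_i(x_0,p)=\sup_q\bigl[-p\cdot q-L_i(x_0,-q)\bigr]$ at $p=Du_i(x_0)$ produces $H_i(x_0,Du_i(x_0))+\La^i\cdot\u(x_0)\leq\al$ almost everywhere in $\T^N$, which by Remark \ref{solu} (convexity of the Hamiltonians) is equivalent to $\u$ being a viscosity subsolution of \eqref{HJa}.

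The main obstacle I anticipate is the preliminary lower bound in the boundedness step: without irreducibility of $\La$ and the resulting strict positivity of every entry of $e^{-T\La}$ for $T>0$, the inequality \eqref{formusub} only controls certain convex combinations of the components of $\u$ and cannot be inverted to a pointwise lower bound on each $u_i$. Once Lipschitz continuity is in hand, the rest is a careful Taylor expansion of $e^{-h\La}$ combined with standard Legendre--Fenchel duality.
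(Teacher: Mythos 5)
Your argument is correct, but it follows a genuinely different route from the paper. The paper, after asserting Lipschitz continuity of $\u$ ``by the same argument of Lemma \ref{subbolip}'', fixes a differentiability point $y$ of $u_i$, introduces the function $\v$ of \eqref{formula} with $\b=\u(y)$, observes from the hypothesis \eqref{formusub} that $\v\geq\u$ with $\v(y)=\u(y)$, so that $u_i$ is subtangent to $v_i$ at $y$ and $Du_i(y)\in\partial v_i(y)$; the conclusion then comes from the already established subsolution property of $\v$ (Theorem \ref{mainsub}) read through Clarke generalized gradients (Remark \ref{solu}). You instead avoid invoking Theorems \ref{mainsub} and \ref{vai} altogether and work directly on $\u$: you specialize \eqref{formusub} to $\a=\ei$, deterministic times and constant controls, Taylor--expand $e^{-h\La}=I-h\La+O(h^2)$, and pass to the limit at a common differentiability point, which is essentially the same computation the paper carries out inside the proof of Theorem \ref{mainsub} for $\v$, now transplanted to $\u$; combined with Remark \ref{solu} this gives the viscosity property. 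Your version is more self-contained and elementary, and it has the additional merit of spelling out the boundedness of $\u$ (needed before the Lipschitz estimate, and obtained in the paper's Lemma \ref{subbolip} only through Proposition \ref{limita}, which is not available for an arbitrary $\u$ satisfying \eqref{formusub}); the paper's version is shorter because it recycles the representation machinery and the maximality statement. Two minor remarks: in your Lipschitz paragraph the coupling term should read $+|y-x|\,\La^i\cdot\u(y)$ rather than $-|y-x|\,\La^i\cdot\u(y)$ (harmless, since it is $O(|y-x|)$ either way and is estimated via Proposition \ref{liplip} and the sup bound on $\u$), and irreducibility is not really needed for the lower bound: the diagonal entries $(e^{-T\La})_{jj}$ are always strictly positive, so taking $i=j$ already lets you solve for $u_j(x)$, although using {\bf(H6)} as you do is of course legitimate since it is a standing assumption.
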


\begin{proof} By using the same argument of Lemma \ref{subbolip}, we
see that $\u$ is Lipschitz--continuous.  Fix $i \in \unoM$ and take
a differentiability point  $y$ of $u_i$,  define $\v$ as in
\eqref{formula} with $\u(y)$ in place of $\b$, then, owing to
Theorem \ref{vai}
\[
   \v \geq \u \quad\hbox{in $\T^N$} \quad\hbox{and} \quad \v(y)= \u(y).
\]
Hence $u_i$ is subtangent to $v_i$ at $y$, which implies  $Du_i(y)
\in
\partial v_i(y)$ and, being $\v$ subsolution to \eqref{HJa},  by Theorem
\ref{mainsub} and Remark \ref{solu} we get

\[ H_i(y,Du_i(y)) + \La^i \, \u(y) = H_i(y,Du_i(y)) + \La^i \, \v(y)
\leq \al.\] This concludes the proof.
\end{proof}

\bigskip

\appendix

\section{Stochastic matrices} \label{uno}

 \parskip +3pt

 In this appendix we collect some basic material on stochastic
 matrices. All matrices appearing below are square matrices. We
 refer to \cite{Me,No}   for the results stated without proof.

 \medskip

 We denote by $\S \subset \R^M$  the simplex of {\em probability
 vectors} of $\R^M$, namely with nonnegative components summing to $1$.

 \medskip

 \begin{Definition} A (right)  stochastic matrix is a matrix
possessing  nonnegative entries and with each row  summing to $1$.
\end{Definition}

\smallskip

\begin{Proposition}\label{stocha1} A matrix $B$ is stochastic if and only
\begin{equation}\label{perron}
    \a \, B \in \S \;\; \hbox{whenever $\a \in \S$}.
\end{equation}
\end{Proposition}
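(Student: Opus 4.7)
The plan is to prove the two directions of the equivalence separately, both being short direct computations.

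For the forward direction, I would assume $B$ is stochastic and take an arbitrary $\a \in \S$. To show $\a B \in \S$, I first check nonnegativity: the $j$-th component is $(\a B)_j = \sum_i a_i B_{ij}$, which is a sum of products of nonnegative numbers. Next I verify that the components sum to $1$ by interchanging the order of summation:
\[
\sum_j (\a B)_j = \sum_j \sum_i a_i B_{ij} = \sum_i a_i \Bigl(\sum_j B_{ij}\Bigr) = \sum_i a_i = 1,
\]
using that each row of $B$ sums to $1$ (by stochasticity) and that $\a \in \S$.

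For the converse, I would test the hypothesis \eqref{perron} against the vertices of the simplex, namely the standard basis vectors $\ei \in \S$. Since $\ei \, B$ is precisely the $i$-th row of $B$, the assumption forces this row to be a probability vector, i.e., to have nonnegative entries summing to $1$. Ranging $i$ over $\unoM$ gives that every row of $B$ has this property, which is exactly the definition of a stochastic matrix.

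The main (and only) thing to be careful about is selecting the right test vectors in $\S$ for the converse; the standard basis vectors $\ei$ are the natural choice since they extract individual rows. No genuine obstacle is expected, as the statement is essentially a reformulation of the defining properties of stochastic matrices in terms of their action on $\S$.
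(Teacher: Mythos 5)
Your proof is correct and follows essentially the same route as the paper: the converse is obtained by testing \eqref{perron} on the basis vectors $\ei$, whose images $\ei \, B$ are the rows of $B$. The only cosmetic difference is that the paper compresses the forward direction into the observation that rows in $\S$ is equivalent to \eqref{perron} (implicitly using linearity and convexity of $\S$), whereas you write out the nonnegativity and summation-interchange computation explicitly.
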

\begin{proof}   $B$ is stochastic if and only if all its rows
are probability vectors, or, in other terms, if and only if
\[ \ei \cdot B \in \S \qquad\hbox{for any $i$}.\]
 this is in turn equivalent to \eqref{perron}.
\end{proof}

\medskip

By the Perron-Frobenius theorem  for nonnegative matrices,  we have

\begin{Proposition}\label{stocha02}  Let $B$ be a stochastic matrix, then its
maximal  eigenvalue is $1$ and there is a corresponding left
eigenvector in $\S$.
\end{Proposition}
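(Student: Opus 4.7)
The plan is to verify the two assertions separately, using only the defining properties of a stochastic matrix together with the Perron--Frobenius theorem cited in the statement.

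First I would show that $1$ is an eigenvalue of $B$ and dominates in modulus. The row-sum condition reads exactly $B \uno = \uno$, so $1$ is an eigenvalue of $B$ (with right eigenvector $\uno$), and hence also of $B^T$. To see that no eigenvalue exceeds $1$ in modulus, I would pick any eigenvalue $\la$ with right eigenvector $w \neq 0$ and choose an index $i_0$ maximising $|w_{i_0}|$. From $\la w_{i_0}=\sum_j B_{i_0 j}w_j$ and the nonnegativity together with the row-sum-$1$ property of $B$, one gets
\[
|\la|\,|w_{i_0}|\;\leq\;\sum_{j}B_{i_0 j}\,|w_j|\;\leq\;|w_{i_0}|\sum_{j}B_{i_0 j}=|w_{i_0}|,
\]
so $|\la|\leq 1$. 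Combined with the previous observation this identifies $1$ as the maximal eigenvalue (in modulus, and thus in the Perron sense).

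Next I would produce the left eigenvector in $\S$. Since $B$ is nonnegative, so is $B^{T}$, and they share the same spectrum, hence $B^{T}$ has spectral radius $1$. The Perron--Frobenius theorem then furnishes a nonnegative right eigenvector $v \neq 0$ of $B^{T}$ for the eigenvalue $1$, i.e.\ $v\,B = v$ with $v \geq 0$ componentwise and $v \neq 0$. Since the components of $v$ are nonnegative and not all zero, $\sum_i v_i > 0$; normalising by this sum yields $\a:=v/\sum_i v_i \in \S$, and linearity preserves the eigenvector relation $\a\,B=\a$. This gives the required left eigenvector lying in the simplex.

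The only place where genuine content beyond elementary manipulations is invoked is the existence of a nonnegative Perron eigenvector, which is the substance of the Perron--Frobenius theorem quoted by the authors. Everything else is bookkeeping: the bound $|\la|\leq 1$ is the standard maximum-component argument, and the identification of $1$ as an eigenvalue is immediate from the row-sum condition. No difficulty should arise in turning this sketch into a formal proof.
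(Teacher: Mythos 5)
Your proof is correct and follows essentially the same route the paper indicates: the paper states this proposition as a direct consequence of the Perron--Frobenius theorem for nonnegative matrices (citing \cite{Me,No}) without writing out a proof, and your argument is exactly that, supplemented with the elementary bookkeeping (the row-sum identity $B\,\uno=\uno$ giving the eigenvalue $1$, the maximum-component bound $|\la|\leq 1$, and the normalization of the nonnegative left eigenvector to land in $\S$). No gaps; the details you add are sound.
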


\medskip

By the Perron-Frobenius theorem for positive matrices,  we have
\begin{Proposition}\label{stocha2}  Let $B$ be a positive stochastic matrix, then its
maximal  eigenvalue is $1$ and is simple.  In addition, the  unique
corresponding  left eigenvector belonging to $\S$  is positive.
\end{Proposition}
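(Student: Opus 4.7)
The plan is to invoke the Perron--Frobenius theorem for strictly positive matrices directly and then combine it with the information already provided by Proposition \ref{stocha02}. Since $B$ is stochastic, Proposition \ref{stocha02} already yields that the maximal eigenvalue equals $1$ and that there is a corresponding left eigenvector lying in $\S$. What remains is simplicity of this eigenvalue together with uniqueness and strict positivity of the associated left eigenvector in $\S$.

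First, because $B$ has strictly positive entries, the classical Perron--Frobenius theorem for positive matrices applies in its strong form: the spectral radius $\rho(B)$ is a geometrically and algebraically simple eigenvalue, it strictly dominates in modulus every other eigenvalue, and the (essentially unique) left eigenvector associated to it can be chosen with all strictly positive entries. Combined with Proposition \ref{stocha02}, which identifies $\rho(B)=1$, this delivers simplicity.

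Second, to extract the statement in terms of $\S$, let $\a_0$ be the strictly positive left Perron eigenvector provided by the theorem. Since all components of $\a_0$ are strictly positive, the sum $\sigma:=\sum_{i=1}^M (\a_0)_i$ is strictly positive, and $\a:=\a_0/\sigma$ is then a left eigenvector for the eigenvalue $1$ which belongs to $\S$ and still has strictly positive components. For uniqueness inside $\S$, suppose $\a' \in \S$ also satisfies $\a'\, B = \a'$; by simplicity of the eigenvalue $1$ the corresponding left eigenspace is one--dimensional, so $\a'= c\,\a$ for some scalar $c$, and summing the components of both sides yields $1 = c\cdot 1$, hence $\a'=\a$.

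The main obstacle here is not really a mathematical one, since the statement is essentially a packaging of Perron--Frobenius; the only point requiring care is the passage from the abstract Perron eigenvector to the unique element of $\S$, which is handled by the elementary normalization above and by exploiting simplicity to rule out competitors.
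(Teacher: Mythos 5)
Your argument is correct, and it follows the same route as the paper, which simply cites the Perron--Frobenius theorem for positive matrices (together with the stochastic-matrix fact that the maximal eigenvalue is $1$) and gives no further proof. Your extra normalization and uniqueness step via simplicity is a harmless elaboration of that same invocation.
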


\medskip

Even if it is an elementary fact, we give for completeness the proof
of the key property that the coupling matrix of the Hamilton--Jacobi
system under  investigation spans a semigroup of stochastic
matrices.

\smallskip

\begin{Proposition}\label{stochazz}
For a  matrix $A$,   $e^{- t A}$ is stochastic for any $t$, if and
only if  {\bf(H4)}, {\bf(H5)} hold with $A$ in place of $\La$.
\end{Proposition}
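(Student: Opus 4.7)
The plan is to prove the two implications separately, in both cases exploiting the identity $e^{-tA}\uno = \uno$ (row-sum condition) and the first-order Taylor expansion of $e^{-tA}$ at $t=0$ (sign condition on entries). Throughout I use that stochastic means nonnegative entries plus all row sums equal to $1$, which by Proposition \ref{stocha1} is equivalent to mapping $\S$ into itself.

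For the necessity direction, assume $e^{-tA}$ is stochastic for every $t \geq 0$. Since every row of $e^{-tA}$ sums to $1$, we have $e^{-tA}\uno = \uno$ identically in $t$; differentiating at $t=0$ yields $A\uno = 0$, which is $(\mathbf{H5})$. For $(\mathbf{H4})$, fix $i \neq j$ and expand $\left(e^{-tA}\right)_{ij} = -t A_{ij} + O(t^{2})$ as $t \to 0^{+}$; nonnegativity of this entry forces $A_{ij} \leq 0$ in the limit $t \downarrow 0$ after division by $t$.

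For sufficiency, assume $(\mathbf{H4})$ and $(\mathbf{H5})$. The row-sum property is immediate: $A\uno = 0$ by $(\mathbf{H5})$, hence $e^{-tA}\uno = \sum_{k\geq 0}\frac{(-t)^{k}}{k!}A^{k}\uno = \uno$. The nontrivial part, and the main obstacle, is nonnegativity of the entries, since the series defining $e^{-tA}$ has alternating contributions. I would overcome this by the standard shifting trick: pick $c \geq \max_{i} A_{ii}$ and set $B = cI - A$. By $(\mathbf{H4})$, every off-diagonal entry of $B$ equals $-A_{ij} \geq 0$, while by the choice of $c$ the diagonal entries of $B$ are also $\geq 0$, so $B$ has all entries nonnegative. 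Because $cI$ commutes with $B$, we may split
\[
e^{-tA} \;=\; e^{-tcI}\,e^{tB} \;=\; e^{-tc}\sum_{k=0}^{\infty}\frac{t^{k}B^{k}}{k!},
\]
and every term on the right has nonnegative entries since $B$ does and $e^{-tc}>0$. Combined with the row-sum computation, this shows that $e^{-tA}$ is stochastic for every $t \geq 0$, completing the equivalence.
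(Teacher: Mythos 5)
Your proof is correct, and while your necessity argument is essentially the paper's (the paper differentiates via $A=\lim_{t\to 0}\frac{I-e^{-tA}}{t}$, which packages your two Taylor-expansion observations into one limit), your sufficiency argument takes a genuinely different route. The paper proves nonnegativity and the row-sum condition simultaneously by writing $e^{-tA}=\lim_{n\to\infty}\left(I-\frac{tA}{n}\right)^{n}$, noting that $I-\frac{tA}{n}$ is stochastic for $n$ large, that products of stochastic matrices are stochastic, and that the set of stochastic matrices is closed; your argument instead splits the two properties, getting the row sums from $A\uno=0$ termwise in the exponential series, and the nonnegativity from the uniformization trick $e^{-tA}=e^{-tc}\,e^{tB}$ with $B=cI-A$ entrywise nonnegative. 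Both are elementary and complete; the paper's Euler-product argument is slightly more economical in that stochasticity is preserved at every stage and no splitting or choice of the shift $c$ is needed, whereas yours avoids invoking the closedness of the set of stochastic matrices and the stability of stochasticity under products, reducing everything to positivity of a power series with nonnegative coefficients. One cosmetic remark: under {\bf(H4)}--{\bf(H5)} the diagonal entries $A_{ii}$ are automatically nonnegative, so your condition $c\geq\max_i A_{ii}$ is exactly what is needed and is easily met; just make sure to state $c>0$ or $c\geq 0$ is not required beyond that, since $e^{-tc}>0$ holds for any real $c$.
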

\begin{proof}
Assume that $A$ satisfies {\bf(H4)}, {\bf(H5)},  then, given $t
>0$, $I - \frac {t A} n$ is stochastic for $n$ suitably large,
consequently $\left ( I - \frac {t A} n \right ) ^n$ is stochastic
because the product of stochastic  matrices is still stochastic, and
\[ e^{-  tA } = \lim_{n\to\infty} \left ( I -
\frac {t A} n \right ) ^n\] is stochastic because stochastic
matrices make up a  compact subset in the space of square matrices.
Conversely, if $e^{- t A}$ is stochastic then the relation
\[ A = \lim_{t \to 0} \frac{I - e^{- t A}}t\]
implies that $A$ satisfies {\bf(H4)}, {\bf(H5)}.

\end{proof}


\begin{Proposition}\label{liplip}  The function
 \[ t \mapsto \ei \, e^{- t\La}\]
 is Lipschitz continuous  in $[0,+ \infty)$ for any $i \in \unoM$.
\end{Proposition}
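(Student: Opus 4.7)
The plan is to obtain the Lipschitz estimate directly from the semigroup property combined with the fact, established in Proposition \ref{stochazz}, that $e^{-t\La}$ is a stochastic matrix for every $t \geq 0$.

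First I would write, for $t \geq 0$ and $h \geq 0$,
\[
\ei \, e^{-(t+h)\La} - \ei \, e^{-t\La} = \big(\ei \, e^{-t\La}\big)\, \big(e^{-h\La} - I\big),
\]
using the semigroup identity $e^{-(t+h)\La}=e^{-t\La}\,e^{-h\La}$. By Proposition \ref{stocha1} applied to the stochastic matrix $e^{-t\La}$, the vector $\ei \, e^{-t\La}$ lies in the simplex $\S$, hence has $\ell^1$--norm equal to $1$ and in particular its components are bounded by $1$ uniformly in $t$.

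Next I would control the factor $e^{-h\La} - I$. By the usual integral representation of the matrix exponential,
\[
e^{-h\La} - I = -\int_0^h \La \, e^{-s\La}\, ds,
\]
and since each $e^{-s\La}$ is stochastic (so each of its entries lies in $[0,1]$), the entries of $\La \, e^{-s\La}$ are uniformly bounded in $s$ by a constant depending only on $\La$. Therefore the entries of $e^{-h\La}-I$ are bounded in modulus by $Ch$ for some constant $C=C(\La)$.

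Combining the two bounds, the components of $\ei\,e^{-(t+h)\La} - \ei\,e^{-t\La}$ are each bounded in modulus by $Ch$, which yields the Lipschitz estimate
\[
\big| \ei\, e^{-(t+h)\La} - \ei\, e^{-t\La} \big| \leq C'\, h
\]
on $[0,+\infty)$, with $C'$ depending only on $\La$ and the chosen norm on $\R^M$. There is no genuine obstacle here; the only point that required the full hypotheses on $\La$ is the uniform boundedness of the entries of $e^{-s\La}$ on the whole half--line, which would not be available without the stochastic character guaranteed by {\bf(H4)}--{\bf(H5)}.
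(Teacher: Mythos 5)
Your proof is correct and rests on exactly the same ingredient as the paper's: the stochastic matrices $e^{-s\La}$ have entries in $[0,1]$, so $\La\,e^{-s\La}$ is uniformly bounded on $[0,+\infty)$, which is what the paper uses to bound the derivative $\frac{d}{dt}\,\ei\,e^{-t\La}=-\ei\,\La\,e^{-t\La}$. Your semigroup-plus-integral formulation is just the integrated version of that one-line derivative bound, so the two arguments are essentially the same.
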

\begin{proof}
We have
\[ \frac d{dt} \ei \, e^{- t\La} = - \ei \, \La \, e^{- t\La}\]
which is bounded in $t \in [0,+ \infty)$ because the matrices
$e^{-t\La}$,  being stochastic, vary in a compact subset of the
space of $M \times M$ matrices.
\end{proof}

\bigskip

\section{Path spaces}\label{due}
We refer to \cite{Bi} for more details in this section.
The term {\em cadlag} corresponds to the French acronym {\em continu
\`{a} droite limite \`{a} gauche}, namely continuous on the right
and with left limit. We consider the space of cadlag paths defined
in $[0,+\infty)$,  with value in $\unoM$ and $\R^N$, denoted by
$\D:=\DiM$ and $\DiR$, respectively. For any $t >0$, we also
indicate by $\D(0,t;\unoM)$ the space of cadlag paths defined in
$[0,t]$ with values in $\unoM$. It can be proved that
\begin{eqnarray}
 \label{ciccio1}   &\hbox{Any cadlag path has at most countably many
    discontinuities.}&  \\
 \label{ciccio2}  &\hbox{Any cadlag path is locally (in time) bounded.}&
\end{eqnarray}

\medskip

\begin{Terminology} \label{cylinder}

To any finite increasing sequence of times $t_1, \cdots, t_k$, with
$k \in \N$,  and indices $j_1, \cdots, j_k$ in $\unoM$ we associate with
a (thin) {\em cylinder} defined as
\begin{equation}\label{cyl0}
   \mathcal C(t_1, \cdots,t_k;j_1, \cdots,j_k)= \{\om \mid
\om(t_1)=j_1, \cdots, \om(t_k)=j_k\} \subset \D.
\end{equation}
To ease notations, we set
\begin{equation}\label{defDi}
    \D_i = \mathcal C(0;i) \qquad\hbox{for any $ i \in \unoM$.}
\end{equation}
 We call {\em multi-cylinders} the sets
made up by finite unions of mutually disjoint cylinders.

\end{Terminology}

\medskip

We  endow $\mathcal D$ with the $\sigma$--algebra $\F$ spanned by
cylinders, those of the type $\mathcal C(s;j)$ for $s \geq 0$, $j
\in \unoM$ are indeed enough. A natural related filtration $\F_t$ is
obtained by picking, as generating sets, just the cylinders
$\mathcal C(t_1, \cdots,t_k;j_1, \cdots,j_k)$ with $t_k \leq t$, for
any fixed $t\geq 0$.

\medskip

Same construction, {\em mutatis mutandis} can be performed in
$\DiR$, in this case the $\sigma$--algebra is spanned by cylinders
of the type
\[\{\xi \in \DiR \mid \xi(s) \in E\}\]
for $s$, $E$ varying in $[0,+\infty)$ and in the Borel
$\sigma$--algebra related to the natural topology of $\R^N$,
respectively.

\medskip

Both $\D$ and $\DiR$ can be endowed with a metric, named after
Skorohod, which make them Polish spaces, namely complete and
separable, and such that the aforementioned $\sigma$--algebras are
the corresponding Borel $\sigma$--algebras

\smallskip
\begin{Remark}\label{projeproje} A consequence of the previous
definitions is  that  $\F$ is the minimal $\sigma$--algebra for
which the evaluation maps
\[ t \mapsto \om(t) \qquad t \in [0,+ \infty)\]
are measurable and the same holds true for the $\sigma$--algebra in
$\DiR$ with respect to the evaluation maps
\[\xi \mapsto \xi(t).\]
A map $\Xi: \D \to \DiR$ (resp $\phi: \D \to \D$) is accordingly
measurable if and only if the maps $\om \mapsto \Xi(\om)(t)$ from
$\D$ to $\R^N$ (resp., $\om \mapsto \phi(\om)(t)$ from $\D$ to
$\unoM$) are measurable for any $t$.
\end{Remark}

\medskip

The convergence induced by Skorohod metric can be defined, say in
$\DiR$ to fix ideas,  requiring  that there exists a sequence $g_n$
of of   increasing continuous functions from $[0, + \infty)$  onto
itself (then $g_n(0)=0$ for any $n$) such that
\begin{eqnarray*}
  g_n(s) &\to& s \quad\hbox{uniformly in $[0,+ \infty)$} \\
  \xi_n(g_n(s)) &\to& \xi(s) \quad\hbox{locally uniformly in $[0, + \infty)$.}
\end{eqnarray*}

This is basically locally uniform convergence, up to an uniformly
small deformation of the time scale given by the $g_n$.  We infer
from the previous definition  that
\begin{eqnarray}
 \label{ciccio21}   &\xi_n \to \xi \;\;\hbox
 {in the Skorohod sense} \; \Rightarrow \xi_n(t) \to \xi(t) \;\;\hbox{at any
   continuity
point of $\xi$} &
\end{eqnarray}
which in particular implies
\begin{eqnarray}\label{ciccio222} &\xi_n
\to \xi \;\;\hbox
 {in the Skorohod sense} \; \Rightarrow \xi_n(0) \to \xi(0) &
\end{eqnarray}

 We  moreover have
\begin{eqnarray}
 \label{ciccio3}  &\hbox{Any  sequence  convergent in the Skorohod
 sense is locally uniformly bounded.}&
\end{eqnarray}
\medskip

For $t >0$, we  say that a path in  $\DiR$ is  {\em piecewise
constant} in $[0,t]$  if is  of the form
\[ \sum_{k=1}^{l-1} x_k \, \I([s_k,s_{k+1})) \qquad\hbox{for $s \in [0,t)$}\]
where $x_k \in \R^N$ and $s_k$ is an increasing sequence of times
with $s_1=0$, $s_l=t$. We will use the following approximation
result, see \cite{Bi} Section 12, Lemma 3, in a  version, slightly
accommodated to our needs.

\smallskip

\begin{Proposition}\label{piece1} For $t>0$ and  $\xi \in \DiR$,
let $s^n_k$, $k =1, \cdots, l_n$,   be a family of strictly
increasing finite sequences  with $s^n_1=0$, $s^n_{l_n}=t$ and
\[ \sup_k s_{k}^n - s_{k-1}^n \to 0  \qquad\hbox{as $n$ goes to
infinity}\] then the sequence of (piecewise constant in $[0,t]$ )
paths
\[ \xi_n= \left \{  \begin{array}{ll}
          \sum_k \xi(s^n_k) \, \I([s^n_{k-1},s^n_{k})) & \hbox{in \ $[0,t)$ }\\
           \xi &   \hbox{in \ $[t, + \infty)$}
         \end{array} \right . \]   converges to $\xi$ in $\DiR$.
\end{Proposition}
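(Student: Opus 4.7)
The statement is the standard approximation fact that piecewise-constant sampling of a cadlag path along a refining partition converges in the Skorohod sense. My plan is to exhibit, for each $n$, a continuous strictly increasing bijection $g_n$ of $[0,+\infty)$ onto itself with $g_n(0) = 0$, such that $g_n \to \mathrm{id}$ uniformly and $\xi_n \circ g_n \to \xi$ locally uniformly. Since $\xi_n \equiv \xi$ on $[t,+\infty)$, I set $g_n$ equal to the identity there and reduce the problem to constructing $g_n$ on $[0,t]$. The delicate point is that $\xi_n$ assigns to $[s^n_{k-1}, s^n_k)$ the right-endpoint value $\xi(s^n_k)$, so the artificial jumps of $\xi_n$ occur at the sampling nodes $s^n_k$, whereas the true jumps of $\xi$ may fall anywhere inside these subintervals. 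A plain identity time-change therefore cannot work near discontinuities, and $g_n$ must shift each genuine jump location onto an appropriate sampling node.

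The key quantitative tool is Billingsley's cadlag modulus: given $\eps > 0$, there exist $\delta > 0$ and a finite partition $0 = r_0 < r_1 < \cdots < r_m = t$ with $r_j - r_{j-1} \geq \delta$ and oscillation of $\xi$ on each half-open stretch $[r_{j-1}, r_j)$ at most $\eps$. Thus all large oscillations of $\xi$ are absorbed at the breakpoints $r_j$. Taking $n$ large enough that $\delta_n := \sup_k(s^n_k - s^n_{k-1}) < \delta$, each interior $r_j$ lies in a unique sampling interval $[s^n_{k(j)-1}, s^n_{k(j)})$ with distinct $r_j$'s in distinct intervals. I would then define $g_n$ on $[0,t]$ as the continuous piecewise linear increasing bijection pinned at $g_n(0) = 0$, $g_n(t) = t$, and $g_n(r_j) = s^n_{k(j)-1}$ for $j = 1, \ldots, m-1$.

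From $|g_n(r_j) - r_j| \leq \delta_n$ at the breakpoints and piecewise linearity one gets $\sup_s |g_n(s) - s| \leq \delta_n \to 0$. For path convergence, if $s \in [r_{j-1}, r_j)$ then by construction $g_n(s) \in [s^n_{k(j-1)-1}, s^n_{k(j)-1})$, so $\xi_n(g_n(s)) = \xi(s^n_\kappa)$ for some $\kappa$ with $s^n_\kappa \in [r_{j-1}, r_j)$; both $\xi(s)$ and $\xi(s^n_\kappa)$ then lie within the $\eps$-oscillation range of $\xi$ on $[r_{j-1}, r_j)$, yielding $|\xi_n(g_n(s)) - \xi(s)| \leq \eps$ uniformly on $[0,t)$. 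A diagonal choice of $\eps \to 0$ together with corresponding partitions and $n$ then produces a single sequence $g_n$ realizing both uniform convergences, upgraded to locally uniform convergence on $[0,+\infty)$ by the identity extension on $[t,+\infty)$. The main obstacle I anticipate is the purely conceptual point of recognizing that the jump of $\xi$ at a breakpoint $r_j$ is matched not by the jump of $\xi_n$ at $s^n_{k(j)}$ but by the \emph{preceding} one at $s^n_{k(j)-1}$, since $\xi_n$ already encodes the post-jump value $\xi(s^n_{k(j)})$ throughout the interval containing $r_j$; this forces the specific reparametrization $g_n(r_j) = s^n_{k(j)-1}$ to align both one-sided limits simultaneously.
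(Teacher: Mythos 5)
The paper itself offers no proof of this proposition: it is quoted from Billingsley (Section 12, Lemma 3), ``slightly accommodated''. Your strategy is the natural direct proof of that lemma in the present right-endpoint, half-line version: explicit time changes $g_n$, pinned along a partition $r_0<\dots<r_m$ produced by the cadlag oscillation lemma, with each $r_j$ sent to the sampling node \emph{preceding} it because right-endpoint sampling anticipates the jumps of $\xi$. That key observation is correct, and in the generic situation ($r_j$ strictly interior to its sampling interval, $\xi$ continuous at $t$) your estimate goes through. But as written the construction fails in two related situations.

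First, if some $r_j$ coincides with a sampling node, i.e. $r_j=s^n_{k(j)-1}$, then the post-jump value is already carried by $\xi_n$ on the \emph{previous} interval $[s^n_{k(j)-2},s^n_{k(j)-1})$, so your claim that $\xi_n(g_n(s))=\xi(s^n_\kappa)$ with $s^n_\kappa\in[r_{j-1},r_j)$ breaks down: one can have $s^n_\kappa=r_j$ and the error is the full jump of $\xi$ at $r_j$, not $\eps$. The fix is to pin $g_n(r_j)=s^n_{\kappa_j-1}$ with $\kappa_j=\min\{k\mid s^n_k\geq r_j\}$. Second, and more seriously, this same phenomenon always occurs at the right endpoint, because $s^n_{l_n}=t$ is a node, and there it collides with your requirement that $g_n$ be the identity on $[t,+\infty)$. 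If $\xi$ has a jump at $t$, then $\xi_n\equiv\xi(t)$ on $[s^n_{l_n-1},t)$ while $\xi_n=\xi$ on $[t,+\infty)$, so $\xi_n$ has its jump at $s^n_{l_n-1}<t$ and is continuous at $t$; consequently, for \emph{any} time change with $g_n(t)=t$ one gets $\sup_{s<t}|\xi_n(g_n(s))-\xi(s)|\geq|\xi(t)-\xi(t^-)|$ for every $n$, so your $g_n$ cannot witness Skorohod convergence. The proposition is still true, but the repair requires abandoning the reduction to $[0,t]$: set $g_n(t)=s^n_{l_n-1}$, let $g_n$ rejoin the identity at $t+c$ for a small $c>0$ chosen by right continuity of $\xi$ at $t$ (so the mismatch on $[t,t+c]$ is bounded by the oscillation of $\xi$ just to the right of $t$), keep $g_n=\mathrm{id}$ beyond $t+c$ so the jumps of $\xi$ after $t+c$ stay aligned, and then send $\eps$, $c$ to $0$ along a diagonal. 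With these two corrections your argument is complete.
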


\bigskip

For any $h >0$, we consider the shift flow $\phi_h$ on $\mathcal D$
defined by
\begin{equation}\label{flow}
    \phi_h(\om)(s)= \om(s+h) \qquad\hbox{for any $s \in [0,+
\infty)$, $\om \in \mathcal D$.}
\end{equation}

\smallskip
Notice that $\phi_h$ is not in general continuous since the fact
that $\om_n \to \om$ in the Skorohod metric does not in general
implies that $\phi_h(\om_n)(0)=\om_n(h) \to \phi_h(\om)(0)= \om(h)$,
unless of course $h$ is a continuity point for $\om$,  and so does
not in turn implies, by \eqref{ciccio222}, that $\phi_h(\om_n)$
converges to $\phi_h(\om)$. However we directly derive from Remark
\ref{projeproje}:

\begin{Proposition}\label{soloshift} The shift flow $\phi_h: \D \to
\D$ is measurable for any $h >0$.
\end{Proposition}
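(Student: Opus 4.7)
The plan is to invoke the characterization of measurability provided by Remark \ref{projeproje} and then reduce to a triviality. Recall that Remark \ref{projeproje} asserts that a map $\phi \colon \D \to \D$ is measurable if and only if, for every fixed $t \in [0,+\infty)$, the composition $\om \mapsto \phi(\om)(t)$ is measurable as a map from $(\D,\F)$ to $\unoM$. This is because $\F$ is, by construction, the minimal $\sigma$-algebra making all evaluations $\om \mapsto \om(t)$ measurable, so any map into $\D$ is automatically measurable once its composition with every evaluation is.

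With this reduction in hand, the verification for $\phi_h$ is immediate: by definition \eqref{flow}, for each fixed $t \geq 0$ we have
\[
\om \mapsto \phi_h(\om)(t) = \om(t+h),
\]
which is nothing but the evaluation map at the fixed time $t+h$. Such evaluation maps generate $\F$ and are therefore measurable by the very definition of the $\sigma$-algebra on $\D$.

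Hence every composition of $\phi_h$ with an evaluation is measurable, and so $\phi_h \colon \D \to \D$ itself is measurable. There is essentially no obstacle here; the only mild conceptual point worth emphasizing is the distinction between measurability (which holds trivially, being determined pointwise by the time-shifted evaluation maps) and Skorohod continuity (which fails at times $h$ where the path has a jump, as pointed out in the discussion preceding the statement). The proposition records that measurability, unlike continuity, survives the shift without any restriction on $h$.
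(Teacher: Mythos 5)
Your argument is exactly the paper's: the paper states Proposition \ref{soloshift} as a direct consequence of Remark \ref{projeproje}, and your verification that $\om \mapsto \phi_h(\om)(t)=\om(t+h)$ is just the evaluation map at time $t+h$, hence measurable since these evaluations generate $\F$, is the intended (and correct) reasoning. Your closing remark contrasting measurability with the failure of Skorohod continuity also matches the discussion preceding the statement.
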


\smallskip

\begin{Proposition}\label{supershift} For nonnegative constants $h$,
$t$, we have
\[ \phi_h^{-1} (\F_t) \subset \F_{t+h}.\]
\end{Proposition}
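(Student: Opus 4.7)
The plan is to run the standard good--sets (or $\pi$--$\lambda$/monotone class) argument, reducing everything to the behavior of $\phi_h^{-1}$ on the generating cylinders.

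First I would pick an arbitrary cylinder $C = \mathcal C(t_1, \cdots, t_k; j_1, \cdots, j_k)$ with $t_k \leq t$, which by construction lies in $\F_t$, and directly compute
\[
\phi_h^{-1}(C) = \{\om \in \D \mid \om(t_1+h) = j_1, \cdots, \om(t_k+h) = j_k\} = \mathcal C(t_1+h, \cdots, t_k+h; j_1, \cdots, j_k).
\]
Since $t_k + h \leq t + h$, this preimage is itself one of the generating cylinders of $\F_{t+h}$, hence belongs to $\F_{t+h}$.

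Next I would introduce the collection
\[
\mathcal G = \{E \in \F_t \mid \phi_h^{-1}(E) \in \F_{t+h}\}
\]
and observe that $\mathcal G$ is a $\sigma$--algebra: this is immediate from the fact that preimages commute with complements, countable unions, and countable intersections, together with $\F_{t+h}$ being itself a $\sigma$--algebra (and $\phi_h^{-1}(\D) = \D \in \F_{t+h}$). By the first step $\mathcal G$ contains every cylinder with last time $\leq t$, and such cylinders generate $\F_t$ by definition. Therefore $\mathcal G = \F_t$, which is exactly the claim.

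I do not expect any real obstacle here, since the argument is purely formal once the preimage of a cylinder has been identified; the only point requiring a brief comment is the verification that $\mathcal G$ is closed under the $\sigma$--algebra operations, and that the family of cylinders with $t_k \leq t$ is indeed a generating family for $\F_t$, which is the content of the definition of the filtration given right after Terminology \ref{cylinder}.
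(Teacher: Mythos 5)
Your proposal is correct and follows essentially the same route as the paper: you compute that $\phi_h^{-1}$ maps a generating cylinder with last time $\leq t$ to a cylinder with last time $\leq t+h$, and then pass to the generated $\sigma$--algebra. The only cosmetic difference is that the paper works with the single--time cylinders $\mathcal C(t_1;j_1)$ (which already generate $\F_t$) and leaves the good--sets step implicit, whereas you treat general cylinders and spell that step out explicitly.
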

\begin{proof} For any $t_1 \geq 0$, $j_1 \in \unoM$ we have
\[ \phi_h^{-1} (\mathcal C(t_1;j_1)) = \mathcal C(t_1+h, j_1).\]
The assertion thus comes from the fact that $\F_t$ is spanned by
cylinders of the form $\mathcal C(t_1;j_1)$, with $t_1 \leq t$, and
in this case $\mathcal C(t_1+ h;j_1) \in \F_{t+h}$.
\end{proof}

\medskip

We also consider that space $\CiT$ of continuous paths defined in
$[0,+ \infty)$ taking values in $\T^N$. It is endowed with a metric
giving it the structure of a Polish space, which induces the  local
uniform convergence.

We define a map
\[X: \DiR \to \CiT\]
via
\begin{equation}\label{proje}
   \mathcal I(\xi)(t) = \proj \left ( \int_0^t \xi \, ds \right ).
\end{equation}
where $\proj$ indicates the projection from $\R^N$ onto $\T^N$.

\begin{Proposition}\label{contcontprop}
The map $\mathcal I(\cdot)$ is continuous.
\end{Proposition}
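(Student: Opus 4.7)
The plan is to unpack Skorohod convergence in $\DiR$, pass to the integrated paths via dominated convergence, upgrade pointwise convergence to locally uniform convergence using an equi-Lipschitz argument, and then invoke the Lipschitz continuity of $\proj$.

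More concretely, suppose $\xi_n \to \xi$ in the Skorohod sense in $\DiR$, and fix $T>0$. By \eqref{ciccio3} the sequence $(\xi_n)$ is locally uniformly bounded, so there is $M>0$ with $|\xi_n(s)|\le M$ and $|\xi(s)|\le M$ for every $s\in[0,T]$ and every $n$. By \eqref{ciccio1}, $\xi$ has at most countably many discontinuities in $[0,T]$, hence all but a Lebesgue-null set of $s\in[0,T]$ are continuity points of $\xi$; by \eqref{ciccio21}, at each such $s$ we have $\xi_n(s)\to\xi(s)$. The dominated convergence theorem then gives, for every fixed $t\in[0,T]$,
\[
\int_0^t \xi_n(s)\,ds \;\longrightarrow\; \int_0^t \xi(s)\,ds.
\]

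Next I would upgrade this pointwise convergence to uniform convergence on $[0,T]$. The functions $t\mapsto\int_0^t\xi_n(s)\,ds$ are all $M$-Lipschitz on $[0,T]$, hence form an equicontinuous family. A standard Arzelà–Ascoli–type argument (pointwise convergence plus equicontinuity on a compact interval) then promotes the pointwise convergence to uniform convergence on $[0,T]$. Since $T$ is arbitrary, $t\mapsto\int_0^t\xi_n(s)\,ds$ converges to $t\mapsto\int_0^t\xi(s)\,ds$ locally uniformly in $[0,+\infty)$ as a map with values in $\R^N$.

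Finally, the projection $\proj:\R^N\to\T^N$ is $1$-Lipschitz for the quotient metric on $\T^N$, so composing preserves locally uniform convergence, yielding $\mathcal I(\xi_n)\to\mathcal I(\xi)$ in $\CiT$. I do not expect any serious obstacle: the only mildly delicate point is that one must avoid trying to use $\xi_n(s)\to\xi(s)$ uniformly in $s$ (which is false in general under Skorohod convergence) and instead rely on almost-everywhere convergence combined with a uniform $L^\infty$ bound, which is precisely the content of \eqref{ciccio21}, \eqref{ciccio1} and \eqref{ciccio3}.
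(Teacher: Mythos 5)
Your proof is correct and follows essentially the same route as the paper: locally uniform boundedness from \eqref{ciccio3}, a.e. pointwise convergence from \eqref{ciccio1} and \eqref{ciccio21}, dominated convergence for fixed $t$, an equi-Lipschitz/Arzel\`a--Ascoli upgrade to locally uniform convergence, and the nonexpansiveness of $\proj$. The only cosmetic difference is that you perform the uniform-convergence upgrade on the $\R^N$-valued integrals before projecting, whereas the paper projects first; the content is identical.
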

\begin{proof} Let us consider  a sequence $\xi_n$ in $\DiR$
converging to some $\xi$, then  by \eqref{ciccio3} it is locally (in
time) uniformly bounded and by \eqref{ciccio1}, \eqref{ciccio21}
\[ \xi_n(s) \to \xi(s) \qquad\hbox{a.e. in $[0,+ \infty)$.} \]
 Then by  the dominated convergence theorem and continuity of $\proj$
 \begin{equation}\label{contcont}
    \mathcal I(\xi_n)(t) \to \mathcal I(\xi)(t) \qquad\hbox{for any $t$}.
\end{equation}
Furthermore, from the uniformly boundedness of $\xi_n$   and the fact that $\proj$
is nonexpansive, we derive that the $\mathcal I(\xi_n)$ are locally
equiLipschitz continuous and locally uniformly bounded. By the Arzel\`a-Ascoli theorem with \eqref{contcont}, we get
\[ \mathcal I(\xi_n) \to \mathcal I(\xi) \qquad\hbox{locally uniformly in time,}\]
as desired
\end{proof}

\medskip

For $\om \in \D$, $t >0$, $x \in \R^N$, we consider the function
\begin{equation}\label{defJ}
    \xi \mapsto \int_0^t L_{\om(s)}(x + \mathcal I(\xi)(s), - \xi(s)) \, ds
\end{equation}
from $\DiR$ to $\R$.

\smallskip

\begin{Proposition}\label{contJ} The function defined in \eqref{defJ} is
continuous.
\end{Proposition}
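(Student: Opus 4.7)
The plan is to prove the continuity of the functional in \eqref{defJ} by invoking the dominated convergence theorem. Fix $\om \in \D$, $t > 0$, $x \in \R^N$, and let $\xi_n \to \xi$ in the Skorohod sense in $\DiR$.

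First, I would collect the three ingredients that drive the argument. By \eqref{ciccio3}, the sequence $\xi_n$ is locally (in time) uniformly bounded, so there is $R > 0$ with $|\xi_n(s)| \leq R$ for every $s \in [0,t]$ and every $n$, and also $|\xi(s)| \leq R$ on $[0,t]$. Next, by \eqref{ciccio1} the set of discontinuities of $\xi$ is at most countable, hence Lebesgue negligible; combining this with \eqref{ciccio21} yields $\xi_n(s) \to \xi(s)$ for a.e. $s \in [0,t]$. Finally, Proposition \ref{contcontprop} applied to the sequence $\xi_n$ gives $\mathcal I(\xi_n)(s) \to \mathcal I(\xi)(s)$ locally uniformly in $s$ (in particular pointwise for every $s \in [0,t]$).

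Next, for any $s \in [0,t]$ at which $\xi_n(s) \to \xi(s)$, the index $\om(s)$ is a fixed element of $\unoM$, and $L_{\om(s)}$ is continuous on $\T^N \times \R^N$. Combining the two pointwise convergences above, I obtain
\[
L_{\om(s)}\bigl(x + \mathcal I(\xi_n)(s),\, -\xi_n(s)\bigr) \;\longrightarrow\; L_{\om(s)}\bigl(x + \mathcal I(\xi)(s),\, -\xi(s)\bigr)
\]
for almost every $s \in [0,t]$. Moreover, since $\mathcal I(\xi_n)(s)$ ranges over the compact torus $\T^N$ and $-\xi_n(s)$ ranges over the compact ball $\overline{B}(0,R) \subset \R^N$, the integrand is uniformly bounded by
\[
C := \max_{i \in \unoM} \sup\bigl\{ |L_i(y,q)| : (y,q) \in \T^N \times \overline{B}(0,R) \bigr\},
\]
which is finite by continuity of the $L_i$ and compactness.

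The dominated convergence theorem then delivers the continuity of the functional. There is no genuine obstacle here: the only minor subtlety is that a sequence $\xi_n$ converging in the Skorohod sense need not converge pointwise at the countably many jump points of $\xi$, but this is harmless for Lebesgue integration and is precisely what \eqref{ciccio1} and \eqref{ciccio21} are tailored to handle.
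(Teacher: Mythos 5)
Your proposal is correct and follows essentially the same route as the paper: Skorohod convergence gives uniform boundedness on $[0,t]$ and a.e. pointwise convergence of $\xi_n$ via \eqref{ciccio1}, \eqref{ciccio21}, \eqref{ciccio3}, Proposition \ref{contcontprop} gives convergence of $\mathcal I(\xi_n)$, and continuity of the $L_i$ plus the dominated convergence theorem conclude. The only difference is that you spell out the uniform bound on the integrand via compactness of $\T^N \times \overline{B}(0,R)$, which the paper leaves implicit.
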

\begin{proof} Let $\xi_n$ be a sequence converging to some $\xi$ in
$\DiR$, then the $\xi_n$ are uniformly bounded in $[0,t]$ and converge
pointwise to $\xi$ a.e by \eqref{ciccio1}, \eqref{ciccio21},
\eqref{ciccio3}. Furthermore, bearing in mind Proposition
\ref{contcontprop}, we know that $\mathcal I(\xi_n)$ converges to
$\mathcal I(\xi)$ in $\CiT$. Using the continuity of $L_i$, for any
$i$, we derive that
\[L_{\om(s)}(x + \mathcal I(\xi_n), - \xi_n) \to L_{\om(s)}(x + \mathcal I(\xi), -
\xi) \quad\hbox{a.e. in $[0,t]$}\]  and, in addition, that the
$L_{\om(s)}(x + \mathcal I(\xi_n), - \xi_n)$ are uniformly bounded. We
thus get the assertion through the dominated convergence theorem.
\end{proof}

%

\end{document}